\newcommand{\hu}{\hat{u}}
\newcommand{\vepsi}{{\varepsilon}}
\newcommand{\eps}{\varepsilon}
\newcommand{\Ome}{{\Omega}}
\newcommand{\p}{{\partial}}
\newcommand{\Del}{{\Delta}}
\newcommand{\nab}{\nabla}
\newcommand{\ue}{u^\eps}
\newtheorem{remark}{Remark}[section]
\newtheorem{thm}{Theorem}[section]
\newtheorem{lem}[thm]{Lemma}
\newtheorem{cor}[thm]{Corollary}
\title{Convergence of a fourth order singular perturbation of 
the $n$-dimensional radially symmetric Monge-Amp\`ere equation}
\author{Xiaobing Feng\thanks{Department of Mathematics, The University of
Tennessee, Knoxville, TN 37996 (xfeng@math.utk.edu). The work of this author was
partially supported by the NSF grant DMS-071083.}
\and{Michael Neilan}
\thanks{Department of Mathematics, University of Pittsburgh, 
Pittsburgh, PA 15260 (neilan@pitt.edu).  The work of this author was 
partially supported by the NSF grant DMS-1238711.}
}
\begin{document}

\maketitle

\begin{abstract}
This paper concerns with the convergence analysis of a fourth order
singular perturbation of the Dirichlet Monge-Amp\`ere problem in the 
$n$-dimensional radial symmetric case. A detailed study of the fourth 
order problem is presented.  In particular, various {\em a priori} estimates with 
explicit dependence on the perturbation parameter $\vepsi$ are derived,
and a crucial convexity property is also proved for the solution of   
the fourth order problem. Using these estimates and the convexity 
property, we prove that the solution of the perturbed problem 
converges uniformly and compactly to the unique convex viscosity  
solution of the Dirichlet Monge-Amp\`ere problem. Rates of convergence
in the $H^k$-norm for $k=0,1,2$ are established, and illustrating 
numerical experiment results are also presented in the paper. 
\end{abstract}

\begin{keywords}
Monge-Amp\`ere equation, singular perturbation, vanishing moment method,
convergence and rate of convergence 
\end{keywords}

\begin{AMS}
35J96  
35J30  
65N30  
\end{AMS}

\section{Introduction}\label{section-1}
In this paper we consider the following fourth order singularly perturbed 
Monge-Amp\`ere problem:
\begin{subequations}\label{fourthorder}
\begin{alignat}{2}\label{fourthorder1}
-\eps\Delta^2 u^\eps +\det(D^2 u^\eps)& =f \qquad &&\text{in}\ \Omega,\\ %
u^\eps& = g\qquad &&\text{on }\p \Ome, \label{fourthorder2}\\
\Delta u^\eps &=\eps \qquad &&\text{on }\p\Ome, \label{fourthorder3}
\end{alignat}
\end{subequations}
where $\vepsi>0$, $\Omega\subset \mathbf{R}^n$ is a convex domain 
and $f>0$ in $\Ome$.
The above problem is a special case of the following fourth order singular 
perturbation problem:
\begin{subequations}\label{moment}
\begin{alignat}{2}\label{moment1}
\eps\Delta^2u^\eps+F(D^2u,\nabla u^\eps,u^\eps,x)&=0\qquad &&\text{in }\Ome,\\
\label{moment2}u^\eps&=g\qquad &&\text{on }\p\Ome,\\
\label{moment3}\Delta u^\eps&=\eps
\quad&&\text{on}\ \p\Ome
\end{alignat}
\end{subequations}
with $F(D^2 u^\eps,\nabla u^\eps,u^\eps,x)=f(x)-\det(D^2 u^\eps)$. 
Here, $D^2 u^\eps(x)$ and $\nab u^\eps(x)$ denote respectively the 
Hessian matrix and the gradient of $u^\eps$ at $x\in \Ome$.

Problem \eqref{fourthorder}, respectively \eqref{moment}, was proposed by the 
authors in \cite{Feng_Neilan09a} as a stepstone to develop efficient numerical 
methods, particularly Galerkin-type methods,  
for approximating the viscosity solution 
of the Dirichlet Monge-Amp\`ere problem (cf. \cite{Feng_Neilan09b,Feng_Neilan11}):
\begin{subequations}\label{MAeqn}
\begin{alignat}{2}
\label{MAeqn1}\text{det}(D^2 u)&=f\qquad &&\text{in}\ \Ome,\\
\label{MAeqn2}u&=g\qquad && \text{on}\ \p\Ome,
\end{alignat}
\end{subequations}
respectively, the general fully nonlinear second order PDE problem:
\begin{subequations}\label{generalPDE} 
\begin{alignat}{2}
F(D^2 u, \nab u, u, x)&=0\qquad &&\text{in }\Ome,\\
u & = g\qquad &&\text{on }\p \Ome.
\end{alignat}
\end{subequations}
We remark that since the boundary condition \eqref{fourthorder3} 
(resp., \eqref{moment3}) is artificially imposed to make the perturbed problem 
well-posed; other type boundary conditions can be used in the place of 
\eqref{fourthorder3} (resp., \eqref{moment3}) (cf. \cite{Feng_Neilan09a}).

Fully nonlinear second order PDEs, which have experienced extensive analytical 
developments in the past thirty years 
(cf. \cite{Gilbarg_Trudinger01,Caffarelli_Cabre95}), 
arise from many scientific and engineering fields 
including differential geometry, optimal control, mass transportation, 
geostrophic fluid, meteorology, and general relativity 
(cf. \cite{Caffarelli_Milman99,Gilbarg_Trudinger01,Fleming_Soner06,Benamou_Brenier00,Trudinger_Wang08b,Brenier10,Feng_Glowinski_Neilan10} 
and the references therein). Fully nonlinear PDEs play a critical role 
in the solutions of these application problems because they appear one 
way or another in the governing equations. 
By the middle eighties the classical solution theory for fully 
nonlinear second order PDEs was  well established 
(cf. \cite[Chapter 17]{Gilbarg_Trudinger01}).  The classical framework was 
soon followed by a comprehensive  viscosity (weak) 
solution theory (cf. \cite{Crandall_Ishii_Lions92,Caffarelli_Milman99} 
and the references therein) after the introduction of 
viscosity solutions by Crandall and Lions in \cite{Crandall_Lions83} for 
fully nonlinear first order Hamilton-Jacobi equations.

In contrast with the success of PDE analysis, numerical solutions for general fully 
nonlinear second order PDEs was a relatively untouched area until very recently 
(cf. \cite{Feng_Glowinski_Neilan10} and the references therein). 
The lack of progress is mainly due to the following two facts: (i) the
notion of viscosity solutions is nonvariational; (ii) the conditional
uniqueness (i.e., uniqueness only holds in a restrictive function class)
of viscosity solutions is difficult to handle at the discrete level.
The first difficulty prevents a direct construction of Galerkin-type methods
and forces one to use indirect approaches as done in 
\cite{Glowinski09,Feng_Neilan09a,Feng_Neilan11} 
for approximating viscosity solutions; {\em this is main focus of this paper}. 
The second difficulty prevents 
any straightforward construction of finite difference methods because such 
a method does not have a mechanism to enforce the conditional uniqueness 
and often fails to capture the sought-after viscosity solution 
(cf. \cite{Feng_Glowinski_Neilan10} and the reference therein).
To overcome the above difficulties, inspired by {\em the vanishing viscosity 
method} for the Hamilton-Jacobi equations, in \cite{Feng_Neilan09a}
we proposed to approximate the fully nonlinear problem \eqref{MAeqn}
(resp. \eqref{generalPDE}) by the fourth order quasilinear problem
\eqref{fourthorder} (resp. \eqref{moment}).  We called this approximation 
procedure {\em the vanishing moment method}. If
the vanishing moment method converges, then the solution of
the former can be computed via the solution of the latter.
This can be achieved by using many existing numerical methodologies 
including all Galerkin-type methods. So far, all of the numerical 
experiments and the analysis of 
\cite{Feng_Neilan09a,Feng_Neilan09b,Feng_Neilan11,Neilan09,Neilan_thesis} 
show that the vanishing moment 
method works effectively.

The primary goal of this paper is to provide a rigorous proof 
of the convergence of the vanishing moment method applied to the 
Monge-Amp\'ere equation in the radially symmetric case in dimension $n\ge 2$. 
Specifically, we shall give a detailed analysis of problem 
\eqref{fourthorder} in the radially symmetric case and prove the 
convergence of the solution $u^\vepsi$ of problem \eqref{fourthorder}
to the unique convex viscosity solution $u$ of problem \eqref{MAeqn}. 
We also derive the rates of convergence 
for $u-u^\vepsi$ in various of norms. These results then put  
the vanishing moment method on a solid footing and provides a partial
theoretical foundation for the numerical work in 
\cite{Feng_Neilan09a,Feng_Neilan09b,Feng_Neilan11,Neilan09}.

The remainder of the paper is organized as follows. 
In Section \ref{section-2}, we give the radially symmetric descriptions
of problems \eqref{MAeqn} and \eqref{fourthorder}. Sections 
\ref{section-3} and \ref{section-4}, which are the bulk of this paper,
is devoted to studying the existence, uniqueness, regularity and
convexity of the solution of the singularly perturbed 
problem \eqref{fourthorder}. With these results in hand, 
we prove the uniform convergence of the solution $u^\vepsi$ of 
problem \eqref{fourthorder} to the unique convex viscosity solution 
$u$ of problem \eqref{MAeqn} in Section \ref{section-5}.
In addition, we derive rates of convergence for $u-u^\vepsi$ 
in the $H^k$-norm for $k=0,1,2$ in Section \ref{section-6}. 
We present a few illustrating
numerical computational results in Section \ref{section-7} and 
end the paper with some concluding remarks in Section \ref{section-8}.

\section{Descriptions of radially symmetric problems}\label{section-2}
Standard space notation is adopted in this paper, we refer the reader
to \cite{Brenner_Scott08,Gilbarg_Trudinger01} for their exact
definitions. Unless stated otherwise,
$\Ome=B_R(0)\subset \mathbf{R}^n$ ($n\geq 2$) stands for the ball centered at 
the origin with radius $R$. We do not assume $\Ome$ is the unit 
ball because many of our results will depend on the size of the radius $R$. 
The unlabeled letter $C$ is used to denote a generic positive constant independent of 
$\vepsi$ that may take on different values at different occurrences.

Suppose that $f=f(r), f\not\equiv 0$ and $g=g(r)$ in \eqref{MAeqn}, that is, 
$f$ and $g$ are radial.  Then the solution $u$ of \eqref{MAeqn}
is expected to be radial, namely, $u(x)$ is a function of 
$r:=|x|=\sqrt{\sum_{j=1}^n x_j^2}$. We set $\hu(r):=\hu(|x|)=u(x)$, and
for the reader's convenience, we now compute $\Del u, \Del^2 u$ and
$\mbox{det}(D^2u)$ in terms of $\hu$ (cf.\,\cite{Monn86,Rios_Sawyer08}). 
First, a direct calculation shows $\frac{\p r}{\p x_j} = {x_j}/{r}$ and 
$\frac{\p }{\p x_j}\Bigl(r^{-1}\Bigr) = -{x_j}/{r^3}$.
By the chain rule we have
\begin{align*}
\frac{\p u(x)}{\p x_j}&=\hu_r(r) \frac{\p r}{\p x_j}
=\hu_r(r) \frac{x_j}{r} ,\quad
\frac{\p^2 u(x)}{\p x_j\p x_i}
=\frac{1}{r}  \Bigl(\frac{1}{r} \hu_r(r)\Bigr)_r x_ix_j 
  + \frac{\hu_r(r)}{r}\delta_{ij}.
\end{align*}
Here, the subscripts stand for the derivatives
with respect to the subscript variables.

Noting that $D^2u(x)$ is a diagonal perturbation of
a scaled rank-one matrix $xx^T$, and since the eigenvalues of $xx^T$ 
are $0$ (with multiplicity $n-1$) and $|x|^2=r^2$ (with multiplicity 
$1$ and corresponding eigenvector $x$),
 the eigenvalues of $D^2u(x)$ are
$\lambda_1 :=\frac{\hu_r(r)}{r} + r\Bigl(\frac{1}{r} \hu_r(r)\Bigr)_r =\hu_{rr}(r)$
(with multiplicity $1$), and
$\lambda_2 :=\frac{\hu_r(r)}{r}$
(with multiplicity $n-1$).
Thus,
\begin{align*}
&\Del u(x) = 
\lambda_1+ (n-1)\lambda_2 =\hu_{rr}(r) +\frac{n-1}{r} \hu_r(r)
=\frac{1}{r^{n-1}} \bigl(r^{n-1} \hu_r\big)_r,\\ 
&\Del^2 u(x) =\Del (\Del u) 
=\Del\Bigl( \frac{1}{r^{n-1}} \bigl(r^{n-1} \hu_r \big)_r \Bigr)
=\frac{1}{r^{n-1}} \Bigl(r^{n-1} \Bigl(\frac{1}{r^{n-1}} \bigl(r^{n-1} \hu_r \big)_r
\Bigr)_r \Bigr)_r,\\ 
&\mbox{det}(D^2u(x)) = \lambda_1 (\lambda_2)^{n-1}
=\hu_{rr}(r) \Bigl[\frac{\hu_r(r)}{r} \Bigr]^{n-1}
=\frac{1}{n r^{n-1}}  \bigl((\hu_r \big)^n)_r.
\end{align*}

Abusing the notion by denoting $\hu(r)$ by $u(r)$, 
problem \eqref{MAeqn} becomes seeking 
a function $u=u(r)$ such that
\begin{subequations}
\label{MA_radial}
\begin{align}
\label{MA1_radial}
\frac{1}{n r^{n-1}}  \bigl((u_r \big)^n)_r &= f  \qquad\mbox{in } (0,R),\\
u(R) &=g(R), \label{MA2_radial} \\
u_r(0) &=0. \label{MA3_radial}
\end{align}
\end{subequations}
We remark that boundary condition \eqref{MA3_radial} is due to the symmetry
of $u=u(r)$. 

\begin{lem}\label{lem3.1}
Suppose that $r^{n-1}f\in L^1((0,R))$ and $f\geq 0$ a.e.\:on $(0,R)$. Then there
exists exactly one real solution if $n$ is odd and there are exactly
two real solutions if $n$ is even, to the boundary value problem 
\eqref{MA_radial}. Moreover, the solutions are
given by the formula
\begin{equation}\label{solution}
u(r)=\left\{ 
\begin{array}{ll}
g(R) \pm \int_r^R \bigl(nL_f(s)\bigr)^{\frac{1}{n}}\,\, ds
&\qquad\mbox{if $n$ is even},\\
\\
g(R) - \int_r^R \bigl(nL_f(s)\bigr)^{\frac{1}{n}}\,\, ds
&\qquad\mbox{if $n$ is odd}
\end{array}\right.
\end{equation}
for $r\in (0,R)$. Here, the function $L_f(s)$ is defined by
\begin{equation}\label{L_f}
L_f(s):=\int_0^s t^{n-1} f(t)\, dt.
\end{equation}
\end{lem}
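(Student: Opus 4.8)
The plan is to integrate the ODE \eqref{MA1_radial} directly, since it is a first-order ODE in the quantity $(u_r)^n$. Multiplying through by $n r^{n-1}$ gives $\bigl((u_r)^n\bigr)_r = n r^{n-1} f(r)$, and integrating from $0$ to $s$ together with the boundary condition \eqref{MA3_radial} (which forces $(u_r)^n$ to vanish at $r=0$) yields $(u_r(s))^n = n L_f(s)$, where $L_f$ is as in \eqref{L_f}. The hypothesis $r^{n-1}f\in L^1((0,R))$ guarantees $L_f$ is well-defined and absolutely continuous on $[0,R]$, and $f\ge 0$ a.e.\ guarantees $L_f(s)\ge 0$, so that the quantity $nL_f(s)$ is a legitimate argument for an $n$-th root.

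Next I would take $n$-th roots. Here the parity of $n$ enters: since $nL_f(s)\ge 0$, when $n$ is odd there is a unique real $n$-th root, namely $u_r(s) = (nL_f(s))^{1/n}\ge 0$; when $n$ is even there are two real $n$-th roots, $u_r(s) = \pm (nL_f(s))^{1/n}$, and a consistent sign must be chosen (it cannot switch, since on the set where $L_f>0$ the root is nonzero and $u_r$, being an integral of an $L^1$ function, varies continuously; one should note $L_f(s)>0$ for $s$ near $R$ because $f\not\equiv 0$ and $f\ge0$, so $u_r$ is genuinely nonzero there and the branch is pinned down). Then I would integrate $u_r$ once more from $r$ to $R$ and impose \eqref{MA2_radial}, obtaining $u(r) = g(R) - \int_r^R u_r(s)\,ds$, which is exactly formula \eqref{solution} in each of the two cases.

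Finally I would verify that the functions produced by \eqref{solution} are indeed solutions in the appropriate (weak/a.e.) sense: differentiating under the integral sign recovers $u_r(r) = \mp(nL_f(r))^{1/n}$ (with the sign matching the formula), raising to the $n$-th power and differentiating in $r$ using the absolute continuity of $L_f$ recovers \eqref{MA1_radial} a.e., while \eqref{MA2_radial} and \eqref{MA3_radial} hold by construction; this establishes existence and simultaneously shows that any solution must be of this form, giving the claimed count of solutions.

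I expect the only genuinely delicate point to be the regularity/branch-selection argument when $n$ is even: one must argue that $(u_r)^n = nL_f$ together with the regularity forced on $u_r$ by the equation (an antiderivative of an $L^1$ function, hence continuous) rules out solutions that switch branches on different subintervals, so that exactly the two global sign choices survive. The integrations themselves are routine once the $L^1$ hypothesis is used to justify the fundamental theorem of calculus for $L_f$.
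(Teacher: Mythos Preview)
Your proposal is correct and is precisely the elementary argument the paper has in mind: the paper does not actually write out a proof but simply states ``Since the proof is elementary (cf.\ \cite{Monn86,Rios_Sawyer08}), we omit it.'' Your two integrations---first of $\bigl((u_r)^n\bigr)_r$ using \eqref{MA3_radial}, then of $u_r$ using \eqref{MA2_radial}---together with the parity discussion for the $n$-th root, constitute exactly that omitted computation. One small remark: your phrase ``$u_r$, being an integral of an $L^1$ function'' is not quite the right justification for continuity of $u_r$; what you actually have is that $(u_r)^n=nL_f$ is absolutely continuous, hence $|u_r|$ is continuous, and since $L_f$ is nondecreasing (because $f\ge 0$) it vanishes only on an initial interval $[0,s_*]$, so on $(s_*,R]$ the sign of $u_r$ is fixed by continuity---which is the argument you sketch in your final paragraph anyway.
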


Since the proof is elementary (cf.\:\cite{Monn86,Rios_Sawyer08}), 
we omit it.  Clearly, when $n$ is even, the first solution (with the ``$+$" sign)
is concave, and the second solution (with the ``$-$" sign) 
is convex because $u_{r}$ and $u_{rr}$ are simultaneously positive 
and negative respectively in the two cases. When $n$ is odd, the 
real solution is convex.

\begin{remark}
The above theorem shows that $u$ is $C^2$ at a point $r_0\in (0,R)$ as long as
$f$ is $C^0$ at $r_0$ and $L_f(r_0)\neq 0$. Also, $u$ is smooth in $(0,R)$ if $f$ is 
smooth in $(0,R)$.  We refer the reader to {\rm \cite{Monn86,Rios_Sawyer08}}
for the precise conditions on $f$ at $r=0$ to ensure the regularity
of $u$ at $r=0$, extensions to the complex Monge-Amp\`ere
equation, and generalized Monge-Amp\`ere equations in 
which $f=f(\nab u,u,x)$.
\end{remark}

Similarly, it is expected that $u^\vepsi=u^\vepsi(r)$ is also radial, and 
the vanishing moment approximation \eqref{moment}
then becomes
\begin{subequations}
\label{moment_radial}
\begin{align}\label{moment1_radial}
-\frac{\vepsi}{r^{n-1}} \Bigl(r^{n-1} \Bigl(\frac{1}{r^{n-1}} 
\bigl(r^{n-1} u^\vepsi_r \big)_r \Bigr)_r \Bigr)_r
+ \frac{1}{n r^{n-1}}  \bigl((u^\vepsi_r \big)^n)_r  &= f  \qquad\mbox{in } (0,R),\\
u^\vepsi(R) &=g(R), \label{moment2_radial} \\
u^\vepsi_r(0)=0,\quad |u^\vepsi_{rr}(0)|<\infty, 
\quad |u^\vepsi_{rrr}(r)| =o\bigl(\frac{1}{r^{n-1}}\bigr) &\quad\mbox{as }r\to 0^+,
\label{moment3_radial} \\
u^\vepsi_{rr}(R) + \frac{n-1}{R} u^\vepsi_r(R) &=\vepsi. \label{moment4_radial} 
\end{align}
\end{subequations}
In the next sections, we shall analyze problem 
\eqref{moment_radial}
which includes proving the existence, uniqueness and regularity of the solution.
After this is done, we then show that the solution
$u^\vepsi$ of \eqref{moment_radial}
converges to the unique convex solution of \eqref{MA_radial}.


Integrating over $(0,r)$ after multiplying \eqref{moment1_radial} by $r^{n-1}$,
using boundary condition \eqref{moment3_radial} and
$
\lim_{r\to 0^+} r^{n-1}\Bigl( u^\vepsi_{rrr}+ \frac{n-1}{r} u^\vepsi_{rr} 
-\frac{n-1}{r^2} u^\vepsi_r \Bigr)=0,
$
we get
\begin{equation}\label{moment1b_radial}
-\vepsi r^{n-1} \Bigl(\frac{1}{r^{n-1}} 
\bigl(r^{n-1} u^\vepsi_r \big)_r \Bigr)_r 
+\frac1{n} (u^\vepsi_r)^n = L_f 
\qquad \mbox{in } (0,R).
\end{equation}

Introduce the new function $w^\vepsi(r):=r^{n-1} u^\vepsi_r(r)$.  
A direct calculation shows that $w^\vepsi$ satisfies
\begin{subequations}
\label{moment_wradial}
\begin{equation}\label{moment1c_radial}
-\vepsi r^{n-1} \Bigl(\frac{1}{r^{n-1}} w^\vepsi_r\Bigr)_r
+\frac{1}{nr^{n(n-1)}} (w^\vepsi)^n = L_f 
\qquad \mbox{in } (0,R).
\end{equation}
Converting the boundary conditions 
\eqref{moment3_radial}--\eqref{moment4_radial} to $w^\vepsi$ we have
\begin{align}
w^\vepsi(0)=w^\vepsi_r(0) &=0, \label{moment3a_radial} \\
w^\vepsi_r(R) &=\vepsi R^{n-1}. \label{moment4a_radial} 
\end{align}
\end{subequations}
In addition, since 
$w^\vepsi_r=r^{n-1} u^\vepsi_{rr} +(n-1)r^{n-2} u^\vepsi_r,$ and
$w^\vepsi_{rr}=r^{n-1}u^\vepsi_{rrr} 
+ 2(n-1)r^{n-2}u^\vepsi_{rr} + (n-1)(n-2)r^{n-3}u^\vepsi_r,$
we have
\begin{align}\label{moment5a_radial}
\frac{\p^j w^\vepsi}{\p r^j}(0)=o\Bigl(\frac{1}{r^{n-1-j}}\Bigr)
\qquad\text{for } 0\leq j\leq \min\{2, n-1\}.
\end{align}

In summary, we have derived from \eqref{moment1_radial} a reduced equation 
\eqref{moment1c_radial}, which is second order and
easier to handle. After problem \eqref{moment_wradial}
is fully understood, we then come back to analyze problem 
\eqref{moment_radial}.

\section{Existence, uniqueness, and regularity of vanishing moment approximations} 
\label{section-3}
We now prove that problem \eqref{moment_wradial} possesses a unique nonnegative 
classical solution. First, we state and prove the following uniqueness result. 

\begin{thm}\label{uniqueness_thm}
Problem \eqref{moment_wradial} has at most one
nonnegative classical solution.
\end{thm}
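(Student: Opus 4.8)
The plan is to prove uniqueness by a comparison/energy argument exploiting the monotonicity of the nonlinear term $s\mapsto \frac{1}{nr^{n(n-1)}}s^n$ on the set of nonnegative functions. Suppose $w_1$ and $w_2$ are two nonnegative classical solutions of \eqref{moment_wradial}, and set $\psi:=w_1-w_2$. Subtracting the two copies of \eqref{moment1c_radial} gives
\begin{equation*}
-\vepsi r^{n-1}\Bigl(\frac{1}{r^{n-1}}\psi_r\Bigr)_r + \frac{1}{nr^{n(n-1)}}\bigl((w_1)^n-(w_2)^n\bigr) = 0 \qquad\mbox{in }(0,R),
\end{equation*}
together with the homogeneous boundary data $\psi(0)=\psi_r(0)=0$ and $\psi_r(R)=0$ (the inhomogeneous term $\vepsi R^{n-1}$ in \eqref{moment4a_radial} cancels).

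Next I would multiply this identity by $r^{1-n}\psi$ and integrate over $(0,R)$. The first term, after dividing by $r^{n-1}$, becomes $-\vepsi\bigl(\tfrac{1}{r^{n-1}}\psi_r\bigr)_r\psi$; integrating by parts and using the boundary conditions on $\psi$ (and the decay estimate \eqref{moment5a_radial}, which guarantees the boundary terms at $r=0$ vanish) yields $\vepsi\int_0^R \frac{1}{r^{n-1}}(\psi_r)^2\,dr \ge 0$. The nonlinear term contributes $\int_0^R \frac{1}{nr^{n(n-1)}}\bigl((w_1)^n-(w_2)^n\bigr)(w_1-w_2)\,dr$, which is nonnegative because $(a^n-b^n)(a-b)\ge 0$ for all $a,b\ge 0$ (here one uses that $w_1,w_2\ge 0$; note $a^n - b^n$ and $a-b$ always have the same sign when $a,b\ge 0$, and $r^{n(n-1)}>0$). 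Since the sum of two nonnegative quantities is zero, both must vanish; in particular $\vepsi\int_0^R \frac{1}{r^{n-1}}(\psi_r)^2\,dr=0$ forces $\psi_r\equiv 0$ on $(0,R)$, hence $\psi$ is constant, and $\psi(0)=0$ gives $\psi\equiv 0$, i.e.\ $w_1=w_2$.

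The main obstacle is making the integration by parts rigorous near the singular endpoint $r=0$, where the weights $r^{1-n}$ and $r^{-n(n-1)}$ blow up. This is where the decay conditions \eqref{moment5a_radial} (equivalently \eqref{moment3a_radial} combined with the $o(\cdot)$-bounds inherited from \eqref{moment3_radial}) are essential: one should carry out the argument on $(\delta,R)$, check that the boundary contributions at $r=\delta$ — involving $\delta^{1-n}\psi_r(\delta)\psi(\delta)$ — tend to $0$ as $\delta\to 0^+$ using those decay rates, and confirm the nonlinear integrand is genuinely integrable at $0$. I would also double-check that "classical solution" in the statement carries enough regularity ($w^\vepsi\in C^3(0,R)$ with the stated one-sided behavior at the endpoints) for every manipulation above to be legitimate; once those endpoint technicalities are dispatched, the monotonicity argument closes the proof cleanly.
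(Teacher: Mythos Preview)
Your energy/monotonicity argument is correct and complete (once the endpoint limits at $r=0$ are verified via the decay $w^\vepsi(r)=O(r^n)$, $w^\vepsi_r(r)=O(r^{n-1})$ inherited from \eqref{moment3_radial}; these make the boundary term $r^{1-n}\psi_r\psi$ and all weighted integrands vanish or stay integrable at the origin). One cosmetic slip: after multiplying by $r^{1-n}\psi$ the nonlinear weight becomes $r^{-(n+1)(n-1)}=r^{-(n^2-1)}$, not $r^{-n(n-1)}$, but this does not affect the sign argument. The paper's proof is quite different: it linearizes the difference of the nonlinearities as $\frac{1}{nr^{n(n-1)}}\overline{w}^\vepsi\phi^\vepsi$ with $\overline{w}^\vepsi\ge 0$, so that $\phi^\vepsi$ satisfies a linear uniformly elliptic equation with nonnegative zeroth-order coefficient, then applies the weak maximum principle to reduce to the boundary values and the Hopf lemma at $r=R$ to rule out $\phi^\vepsi(R)\neq 0$ via the Neumann condition $\phi^\vepsi_r(R)=0$. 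The maximum-principle route entirely avoids the singular-weight integrability checks at $r=0$ that your approach must confront, at the cost of invoking the Hopf boundary-point lemma; your approach is more self-contained and variational, needing only integration by parts and the elementary monotonicity $(a^n-b^n)(a-b)\ge 0$, but requires the careful endpoint analysis you flagged.
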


\begin{proof}
Suppose that $w^\vepsi_1$ and $w^\vepsi_2$ are two nonnegative 
classical solutions to \eqref{moment1c_radial}--\eqref{moment4a_radial}. 
Let 
\[
\phi^\vepsi:= w^\vepsi_1-w^\vepsi_2\qquad\text{and}\qquad 
\overline{w}^\vepsi:=
\left\{
\begin{array}{ll}
\displaystyle\mathop{\sum_{\alpha+\beta = n-1}}_{\alpha,\beta\ge 0} (w_1^\eps)^\alpha (w_2^\eps)^\beta &\text{if }w^\eps_1= w^\eps_2\smallskip\\
\dfrac{(w^\vepsi_1)^n-(w^\vepsi_2)^n}{w^\vepsi_1-w^\vepsi_2} & \text{otherwise}.
\end{array}
\right.
\]
Subtracting the corresponding equations satisfied by 
$w^\vepsi_1$ and $w^\vepsi_2$ yields
\begin{subequations}
\label{ErrorEquationsABC}
\begin{align} \label{error1}
-\vepsi r^{n-1} \Bigl(\frac{1}{r^{n-1}} \phi^\vepsi_r\Bigr)_r
+ \frac{1}{nr^{n(n-1)}} \overline{w}^\vepsi \phi^\vepsi &= 0 
\qquad \mbox{in } (0,R), \\
\phi^\vepsi(0)=\phi^\vepsi_r(0) &=0,  \label{error2}\\
\phi^\vepsi_r(R) &=0. \label{error3}
\end{align}
\end{subequations}
Noting that $\overline{w}^\vepsi\geq 0$ in $[0,R]$, 
we have by the weak maximum 
principle \cite[Theorem 2, page 329]{Evans98}
$\max_{ [0,R]} |\phi^\vepsi (r)| =\max\{ |\phi^\vepsi(0)|, |\phi^\vepsi(R)| \}  
=\max\{ 0, |\phi^\vepsi(R)| \}.$
If $\phi^\vepsi(R)=0$, then $\phi^\vepsi\equiv 0$. If 
$\phi^\vepsi(R)\neq 0$, then $\phi^\vepsi$ takes its maximum
or minimum value at $r=R$. However, the strong maximum principle  
\cite[Theorem 4, page 7]{Protter_Weinberger67}  
implies that $\phi^\vepsi_r(R)\neq 0$, which contradicts with  
boundary condition $\phi^\vepsi_r(R)= 0$. Hence, $\phi^\vepsi\equiv 0$
which implies $w^\vepsi_1\equiv w^\vepsi_2$. 
\end{proof}


Next, we prove that the existence of nonnegative solutions to 
problem \eqref{moment_wradial}.

\begin{thm}\label{existence_thm}
Suppose $r^{n-1}f\in L^1((0,R))$ and $f\geq 0$ a.e. in $(0,R)$.
Then  there is a nonnegative classical solution to 
problem \eqref{moment_wradial}.
\end{thm}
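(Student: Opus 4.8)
The plan is to construct a solution of \eqref{moment_wradial} by a continuation/fixed-point argument, exploiting the fact that \eqref{moment1c_radial} is, after the substitution, a semilinear second-order ODE on $(0,R)$ with a monotone (increasing) nonlinearity $s\mapsto s^n/(nr^{n(n-1)})$ on the relevant cone $s\ge 0$. First I would recast the problem variationally: multiplying \eqref{moment1c_radial} by $r^{1-n}$ and testing against $v$ with $v(0)=0$, the equation reads, in weak form,
\begin{equation*}
\eps\int_0^R \frac{1}{r^{n-1}} w^\vepsi_r v_r\, dr + \int_0^R \frac{1}{n r^{n(n-1)}} (w^\vepsi)^n\, r^{1-n} v\, dr = \int_0^R L_f\, r^{1-n} v\, dr + \eps R^{n-1} v(R),
\end{equation*}
the last term coming from the Neumann condition \eqref{moment4a_radial}; this is the Euler--Lagrange equation of a strictly convex functional on the affine space $\{w : w-w_0 \in V\}$, where $w_0$ carries the essential condition $w(0)=0$ and $V$ is the weighted Sobolev space with norm $\int_0^R r^{1-n} w_r^2$. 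The degenerate weights $r^{1-n}$ at the origin are harmless because the boundary data \eqref{moment3a_radial} force $w^\vepsi$ and $w^\vepsi_r$ to vanish there at the rates recorded in \eqref{moment5a_radial}, so the relevant function space embeds sensibly and Hardy-type inequalities control the lower-order terms.

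The key steps, in order, would be: (i) replace the nonlinearity $s^n$ by its truncation $(\max\{s,0\})^n$ (or, for general $n$, by a globally Lipschitz-on-bounded-sets increasing function agreeing with $s^n$ on $[0,M]$) and solve the resulting problem by minimizing the strictly convex coercive functional, obtaining a unique weak solution $w^\vepsi$; (ii) derive a nonnegativity/barrier estimate showing $0 \le w^\vepsi \le M$ for an $\eps$- and data-dependent $M$, so that the truncation is inactive and $w^\vepsi$ solves the original equation — for the lower bound I would test with $(w^\vepsi)^- := \min\{w^\vepsi,0\}$ and use $L_f\ge 0$ together with the monotonicity of the nonlinearity, and for the upper bound compare with the explicit supersolution obtained by dropping the $-\eps r^{n-1}(\cdots)_r$ term, i.e. roughly $w^\vepsi \le (n L_f)^{1/n}$ plus a correction absorbing the Neumann flux; (iii) bootstrap regularity: once $w^\vepsi \in H^1_{\mathrm{loc}}$ is bounded, the equation $\eps(r^{1-n} w^\vepsi_r)_r = r^{1-n}\big(\tfrac{1}{n r^{n(n-1)}}(w^\vepsi)^n - L_f\big)$ has smooth right-hand side on compact subsets of $(0,R)$ (using the regularity of $L_f$ where $f$ is smooth), so elliptic regularity gives $w^\vepsi \in C^\infty((0,R))$, and the boundary behavior at $r=0$ in \eqref{moment5a_radial} is recovered by integrating the equation against $r^{n-1}$ as was done to pass from \eqref{moment1_radial} to \eqref{moment1b_radial}, while the classical attainment of the Neumann condition at $R$ follows from $H^2$-regularity up to $r=R$.

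The main obstacle I expect is step (ii) combined with the degeneracy at $r=0$: one must show the constructed weak solution is genuinely nonnegative and bounded \emph{before} knowing it solves the un-truncated equation, and the comparison arguments have to be carried out in the weighted space where the weight $r^{1-n}$ blows up at the origin, so the usual maximum-principle test-function manipulations require care that the boundary terms at $r=0$ vanish — this is exactly where the asymptotic conditions \eqref{moment3a_radial} and \eqref{moment5a_radial} are essential and must be propagated from the weak formulation. A secondary technical point is the case of even $n$ versus odd $n$: for odd $n$ the map $s\mapsto s^n$ is already increasing on all of $\mathbf{R}$ so truncation is unnecessary, whereas for even $n$ one genuinely needs the sign-definiteness of $L_f$ (from $f\ge 0$) to rule out the spurious "concave" branch seen in Lemma~\ref{lem3.1}; I would isolate this in the lower-bound estimate. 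Finally, uniqueness is already furnished by Theorem~\ref{uniqueness_thm}, so no separate argument is needed there.
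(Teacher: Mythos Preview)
Your route is genuinely different from the paper's. The paper does not use a variational formulation at all; it runs a Picard iteration. Starting from a nonnegative seed $\psi^0$ it solves, for each $k$, the \emph{linear} problem
\[
-\eps r^{n-1}\Bigl(\frac{1}{r^{n-1}}\psi^{k+1}_r\Bigr)_r + \frac{(\psi^k)^{n-1}}{n\,r^{n(n-1)}}\,\psi^{k+1} = L_f
\]
with the same boundary data, shows $\psi^{k+1}\ge 0$ via the weak and strong maximum principles, then obtains $k$-uniform $C^2$ bounds by testing the equation with $\psi^{k+1}$ (in its non-self-adjoint form, not your $r^{1-n}$-weighted symmetric form) together with a pointwise elliptic estimate, and finally extracts a convergent subsequence by Arzel\`a--Ascoli. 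The iteration lets the paper stay entirely within classical maximum-principle and ODE arguments and avoid weighted Sobolev machinery.

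Your variational approach is more direct, and the nonnegativity step (test with $(w^\eps)^-$) is clean. Two places, however, need more than you indicate. First, your symmetric bilinear form carries the weight $r^{1-n}$ and the nonlinear term the weight $r^{1-n^2}$, which are much more singular at $r=0$ than anything appearing in the paper's estimates; finiteness of the functional and coercivity genuinely require that admissible functions vanish like $r^{n-1}$ at the origin, and building that into the variational space without circularity is, as you note, the real crux---it is not automatic from membership in the weighted $H^1$ space alone. Second, your upper-bound sketch is not correct as stated: dropping the $-\eps r^{n-1}(\cdot)_r$ term does not produce a supersolution, since that term has no definite sign for general $L_f$, so $r^{n-1}(nL_f)^{1/n}$ need not dominate $w^\eps$; you will need an energy bound (as the paper uses) or a genuinely different barrier. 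A minor slip: in your weak form the Neumann boundary contribution should be $\eps^2 v(R)$, not $\eps R^{n-1} v(R)$.
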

\begin{proof}
We divide the proof into three steps.

\smallskip
{\em Step 1:}
Let $\psi^0\in C^2([0,R])$ be a nonnegative function 
that  satisfies $\psi^0(0)=\psi^0_r(0)=0$
and $\psi^0_r(R)=\vepsi R^{n-1}$. For example, we may take 
$\psi^0(r)=\frac{\vepsi}n r^n$.
We then define a sequence of functions $\{\psi^k\}_{k\geq 0}$ 
recursively by solving for $k=0,1,2,\cdots$
\begin{subequations}
\label{existence}
\begin{align}\label{existence1}
-\vepsi r^{n-1} \Bigl(\frac{1}{r^{n-1}} \psi^{k+1}_r\Bigr)_r
+\frac{1}{nr^{n(n-1)}} (\psi^k)^{n-1} \psi^{k+1} 
&= L_f(r) 
 \qquad \mbox{in } (0,R),  \\
\psi^{k+1}(0)=\psi^{k+1}_r(0) &=0, \label{existence2} \\
\psi^{k+1}_r(R) &=\vepsi R^{n-1}. \label{existence3} 
\end{align}
\end{subequations}

We first show by induction that for any such sequence satisfying \eqref{existence},
there holds  $\psi^k\geq 0$ in $[0,R]$ for all $k\geq 0$.
Note that $\psi^0\geq 0$ by construction. Suppose that $\psi^k\geq 0$ 
in $[0,R]$.  Since $f\geq 0$ and $f\not \equiv 0$, we have 
\[
-\vepsi r^{n-1}\Bigl(\frac{1}{r^{n-1}} \psi^{k+1}_r\Bigr)_r
+\frac{1}{nr^{n(n-1)}} (\psi^k)^{n-1} \psi^{k+1} > 0 \qquad \mbox{in } (0,R). 
\]
Hence, $\psi^{k+1}$ is a supersolution to the linear differential 
operator appearing in the left-hand side of \eqref{existence1}.  
By the weak maximum principle \cite[Theorem 2, page 329]{Evans98}, we have
\[
\min_{ [0,R]} \psi^{k+1} (r) \geq \min\{0, \psi^{k+1}(0), \psi^{k+1}(R)\}
=\min\{0, \psi^{k+1}(R)\}.
\]

Now suppose that  $\psi^{k+1}(R)< 0$.  Then since $\psi^{k+1}_r(R) =\vepsi R^{n-1}> 0$,
the strong maximum principle \cite[Theorem 4, page 7]{Protter_Weinberger67}
implies that $\psi^{k+1}\equiv\mbox{const}$ in $[0,R]$, which leads to 
a contradiction as $\psi^{k+1}(0)=0$. Thus, we must have
$\psi^{k+1}(R)\geq 0$, and therefore, $\psi^{k+1}\geq 0$ in $[0,R]$. 
By the induction  argument, we conclude that $\psi^k\geq 0$ 
in $[0,R]$ for all $k\geq 0$.

It then follows from the standard theory for linear elliptic equations 
(cf. \cite{Evans98,Gilbarg_Trudinger01}) that \eqref{existence}
has a unique classical solution $\psi^{k+1}$. Hence the $(k+1)$th iterate
$\psi^{k+1}$ is well defined and therefore so is the sequence $\{\psi^{k}\}_{k\geq 0}$.

\smallskip
{\em Step 2:} Next, we shall derive some uniform (in $k$) estimates
for the sequence $\{\psi^k\}_{k\geq 0}$. To this end, we first prove that
$\psi^{k+1}(R)$ can be bounded from above uniformly in $k$.  
Multiplying \eqref{existence1} by $\psi^{k+1}$ and integrating by parts yields
\begin{align}\label{existence4}
&-\vepsi \psi^{k+1}_r(r) \psi^{k+1}(r) \Bigl|_{r=0}^{r=R}
+\vepsi \int_0^R |\psi^{k+1}_r(r)|^2 \,dr \\
&+\vepsi \int_0^R \frac{n-1}{r} \psi^{k+1}_r(r) \psi^{k+1}(r)\, dr 
+ \int_0^R \frac{1}{nr^{n(n-1)}} (\psi^k(r))^{n-1} |\psi^{k+1}(r)|^2\,dr \nonumber \\ 
&\hskip 1in 
= \int_0^R \psi^{k+1}(r) L_f(r) \,dr. \label{existence4} \nonumber
\end{align}

It follows from boundary conditions \eqref{existence2} and \eqref{existence3} that
\begin{equation}\label{existence5a}
-\vepsi \psi^{k+1}_r(r) \psi^{k+1}(r) \Bigl|_{r=0}^{r=R}=-\vepsi^2 R^{n-1}\, \psi^{k+1}(R).
\end{equation}
Integrating by parts gives us
\begin{align}\label{existence5}
\int_0^R \frac{1}{r} \psi^{k+1}_r(r) \psi^{k+1}(r)\, dr
&=\frac{1}{2r} \bigl(\psi^{k+1}(r)\bigr)^2\Bigr|_{r=0}^{r=R} 
+\int_0^R \frac{1}{2r^2}\bigl(\psi^{k+1}(r)\bigr)^2\, dr \\
&=\frac{1}{2R} \bigl(\psi^{k+1}(R)\bigr)^2
+ \int_0^R \frac{1}{2r^2} \bigl(\psi^{k+1}(r)\bigr)^2\, dr. \nonumber
\end{align}
By Schwarz, Poincar\'e, and Young's inequalities, we get
\begin{align}\label{existence6}
&\int_0^R \psi^{k+1}(r) L_f(r) \,dr 
\leq \frac{\vepsi}2 \int_0^R |\psi^{k+1}_r(r)|^2 \,dr
+ \frac{C_1^2}{2\vepsi} \int_0^R \bigl(L_f(r)\bigr)^2\, dr
\end{align}
for some positive constant $C_1=C_1(R)$.
Combining \eqref{existence4}--\eqref{existence6} we obtain
\begin{align}\label{existence7a} 
&-2\vepsi^2 R^{n-1}\, \psi^{k+1}(R) + \vepsi \int_0^R |\psi^{k+1}_r(r)|^2 \,dr
+\frac{\vepsi(n-1)}{R}\bigl(\psi^{k+1}(R)\bigr)^2 \\
&+ \int_0^R \frac{\vepsi(n-1)}{r^2} \bigl(\psi^{k+1}(r)\bigr)^2\, dr
+\int_0^R \frac{2}{nr^{n(n-1)}} (\psi^k(r))^{n-1} |\psi^{k+1}(r)|^2\,dr 
\nonumber\\
&\hskip 1.8in
\leq \frac{C_1^2}{\vepsi} \int_0^R \bigl(L_f(r)\bigr)^2\, dr.
\nonumber
\end{align}
Let 
$z:=\psi^{k+1}(R),\
b:=\frac{2\vepsi R^n}{n-1},$
and 
$c:=\frac{C_1^2R^2}{\vepsi^2(n-1)}\bigl(L_f(R)\bigr)^2.$
Then from \eqref{existence7a} we have $z^2-bz-c\le 0$,
which in turn implies that
\[
z_1\leq z \leq z_2,\quad\mbox{where}\quad  z_1=\frac{b-\sqrt{b^2+4c}}2,\quad
z_2=\frac{b+\sqrt{b^2+4c}}2.
\]
Since $-z_2< z_1$, the above inequality then infers that
$|z|\leq z_2$. Thus, there exists a positive constant
$C_2=C_2(R, L_f)$ such that
$\bigl|\psi^{k+1}(R)\bigr|\leq z_2\leq \frac{C_2}{\vepsi}.$
Substituting this bound into the first term on the
left-hand side of \eqref{existence7a} we also get
\begin{align}\label{existence8}
\vepsi \int_0^R |\psi^{k+1}_r(r)|^2 \,dr 
&+\int_0^R \frac{\vepsi(n-1)}{r^2} \bigl(\psi^{k+1}(r)\bigr)^2\, dr \\
& +\int_0^R \frac{2}{nr^{n(n-1)}} \bigl(\psi^k(r)\bigr)^{n-1}|\psi^{k+1}(r)|^2\,dr
\nonumber\\
&\leq C_3:=\frac{C_1^2R}{\vepsi} \bigl(L_f(R)\bigr)^2 + 2\vepsi^2 R^{n-1} C_2. 
\nonumber
\end{align}

Now using the pointwise estimate for linear elliptic equations
\cite[Theorem 3.7]{Gilbarg_Trudinger01} we have 
\begin{equation}\label{existence9}
\max_{[0,R]} |\psi^{k+1}(r)| \leq \Bigl(\frac{C_2}{\vepsi}  
+ \frac{L_f(R)}{\vepsi}\Bigr).
\end{equation}

Next, we show that $\psi^{k+1}_r$ is also uniformly bounded (in $k$) in $[0,R]$.
To this end, integrating \eqref{existence1} over $(0,r)$ after multiplying
it by $r^{n(n-1)}$, and integrating by parts twice in the first term yields
\begin{align}\label{existence10}
\psi^{k+1}_r(r)&= 
-\frac{(n^2-1)[n(n-1)-1]}{r^{n(n-1)}} \int_0^r s^{n(n-1)-2} \psi^{k+1}(s)\,ds 
\\
&\hspace{-1cm}\, +\frac{n^2-1}{r} \psi^{k+1}(r)
+\frac{1}{\vepsi nr^{n(n-1)}} \int_0^r  \bigl(\psi^k(s)\bigr)^{n-1}
\psi^{k+1}(s)\,ds \nonumber \\ 
&\quad\, 
-\frac{1}{\vepsi r^{n(n-1)}}  \int_0^r s^{n(n-1)} L_f(s) \,ds
\nonumber
\end{align}
for all $r\in (0,R)$.
Using L'H\^opital's rule it is easy to check that the limit as
$r\to 0^+$ of each term on the right-hand side of \eqref{existence10}
is zero. Hence, each term is bounded in a neighborhood of $r=0$. 
Moreover, on noting that $\psi^k\geq 0$, by Schwarz inequality we have
\begin{align}\label{existence11}
&\int_0^r  \bigl(\psi^k(s)\bigr)^{n-1} \psi^{k+1}(s)\,ds \\
&\leq \Bigl(\int_0^r s^{n(n-1)}  \bigl(\psi^k(s)\bigr)^{n-1} \,ds\Bigr)^{\frac12} 
\Bigl(\int_0^r \frac{1}{s^{n(n-1)}} \bigl(\psi^k(s)\bigr)^{n-1} |\psi^{k+1}(s)|^2\,ds\Bigr)^{\frac12}.
\nonumber
\end{align}

Now in view of \eqref{existence8}--\eqref{existence11} we conclude 
that there exists a positive constant $C_4=C_4(R,L_f)$ such that
\begin{equation}\label{existence12}
\max_{[0,R]} |\psi^{k+1}_r(r)| \leq \frac{C_4}{\vepsi^{\frac{n+2}2}}.
\end{equation}

By \eqref{existence1} we get
\begin{align}\label{existence13}
\psi^{k+1}_{rr}(r)=\frac{1}{r} \psi^{k+1}_r(r) 
&+\frac{1}{\vepsi nr^{n(n-1)}} \bigl(\psi^k(r)\bigr)^{n-1} \psi^{k+1}(r) \\ 
& -\frac{1}{\vepsi} L_f(r) \qquad\forall r\in (0,R).\nonumber  
\end{align}
Again, using L'H\^opital's rule and \eqref{moment5a_radial}
it is easy to check that the limit as
$r\to 0^+$ of each term on the right-hand side of \eqref{existence13}
exists, and therefore, each term is bounded in a neighborhood of $r=0$.
Hence, it follows from \eqref{existence9} and \eqref{existence12} that
there exists a positive constant $C_5=C_5(R,L_f)$ such that
\begin{equation}\label{existence14}
\max_{[0,R]} |\psi^{k+1}_{rr}(r)| \leq \frac{C_5}{\vepsi^{n+1}}.
\end{equation}

To summarize, we have proved that 
$\|\psi^{k+1}\|_{C^j([0,R])}\leq C(\vepsi,R,n,L_f)$ for $j=0,1,2$ 
and the bounds are independent of $k$.  Clearly, by a
simple induction argument we conclude that these estimates hold for 
all $k\geq 0$.

\smallskip
{\em Step 3:} 
Since $\|\psi^k\|_{C^2([0,R])}$ is uniformly bounded in $k$, then 
both $\{\psi^k\}_{k\geq 0}$ and $\{\psi^k_r\}_{k\geq 0}$
are uniformly equicontinuous. It follows from
Arzela-Ascoli compactness theorem (cf. \cite[page 635]{Evans98}) that
there is a subsequence of $\{\psi^k\}_{k\geq 0}$ (still denoted by the
same notation) and $\psi\in C^2([0,R])$ such that 
\begin{alignat*}{2}
\psi^k &\longrightarrow \psi\ \text{ and }\ \psi^k_r \longrightarrow \psi_r
&&\quad\mbox{uniformly in every compact set $E\subset (0,R)$ as } k\to \infty.
\end{alignat*}

Testing equation \eqref{existence1} with an arbitrary function
$\chi\in C^1_0((0,R))$ yields
\begin{align*}
&\vepsi\int_0^R \psi^{k+1}_r(r) \chi_r(r) \,dr  
+\vepsi\int_0^R \frac{n-1}{r} \psi^{k+1}_r(r) \chi(r) \,dr \\
&\quad
+\int_0^R \frac{1}{nr^{n(n-1)}}\bigl(\psi^k(r)\bigr)^{n-1} \psi^{k+1}(r) \chi(r)\,dr
=\int_0^R L_f(r) \chi(r)\, dr. 
\end{align*}
Setting $k\to \infty$ and using the Lebesgue Dominated Convergence 
Theorem, we obtain
\begin{align}\label{existence15}
\vepsi\int_0^R \psi_r(r) \chi_r(r) \,dr  
&+\vepsi\int_0^R \frac{n-1}{r} \psi_r(r) \chi(r) \,dr \\
&+\int_0^R \frac{1}{nr^{n(n-1)}} \bigl(\psi(r)\bigr)^n \chi(r)\,dr
=\int_0^R L_f(r) \chi(r)\, dr. \nonumber
\end{align}
Since $\psi\in C^2([0,R])$, we are able to integrate by parts in the first term on the 
left-hand side of \eqref{existence15}, yielding 
\[
\int_0^R \Bigl[ -\vepsi \psi_{rr}(r) +\frac{\vepsi(n-1)}{r} \psi_r(r) 
+  \frac{1}{nr^{n(n-1)}} \bigl(\psi(r)\bigr)^n - L_f(r) \Bigr] \chi(r)\, dr=0
\]
for all $\chi\in C^1_0((0,R))$. This then implies that
\[
-\vepsi \psi_{rr}(r) +\frac{\vepsi(n-1)}{r} \psi_r(r) 
+ \frac{1}{nr^{n(n-1)}} \bigl(\psi(r)\bigr)^n
- L_f(r) =0 \qquad\forall r\in (0,R),
\]
that is,
\[
-\vepsi r^{n-1} \Bigl(\frac{1}{r^{n-1}} \psi_r(r)\Bigr)_r  
+ \frac{1}{nr^{n(n-1)}} \bigl(\psi(r)\bigr)^n = L_f(r)\qquad\forall r\in (0,R).
\]
Thus, $\psi$ satisfies \eqref{moment1c_radial} pointwise in $(0,R)$.

Finally, it is clear that $\psi \geq 0$ in $[0,R]$, and it follows easily 
from \eqref{existence2} and \eqref{existence3} that 
$\psi(0)=\psi_r(0) =0$ 
and $\psi_r(R) =\vepsi R^{n-1}.$
So we have demonstrated that $\psi\in C^2([0,R])$ is a nonnegative
classical solution to problem \eqref{moment1c_radial}--\eqref{moment4a_radial}.  
The proof is complete.
\end{proof}

\begin{remark}
We note that the a priori estimates derived in the proof are not sharp
in $\vepsi$. Better estimates will be obtained (and needed) in the
next section after the positivity of $\Del u^\vepsi$ is established.
\end{remark}

The above proof together with the uniqueness theorem, 
Theorem \ref{uniqueness_thm}, and the strong
maximum principle immediately give the following corollary.

\begin{cor}\label{existence_cor}
Suppose $r^{n-1}f\in L^1((0,R))$ and $f\geq 0$ a.e. in $(0,R)$, then 
there exists a unique nonnegative classical solution $w^\vepsi$ to
problem \eqref{moment_wradial}. Moreover,
$w^\vepsi>0$ in $(0,R)$, $w^\vepsi \in C^3((0,R))$ if $f\in C^0((0,R))$, and 
$w^\vepsi$ is smooth provided that $f$ is smooth.
\end{cor}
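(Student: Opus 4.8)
The plan is to assemble the corollary from the three ingredients already in hand: the existence of a nonnegative classical solution (Theorem \ref{existence_thm}), the uniqueness of such a solution (Theorem \ref{uniqueness_thm}), and the strong maximum principle, together with standard elliptic bootstrapping. First, Theorems \ref{existence_thm} and \ref{uniqueness_thm} combine at once to give the existence and uniqueness of a nonnegative classical solution $w^\vepsi$ to \eqref{moment_wradial}; nothing new is needed there.

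Next I would upgrade nonnegativity to strict positivity in the open interval. Suppose $w^\vepsi(r_0)=0$ for some $r_0\in(0,R)$. Since $w^\vepsi\ge 0$ on $[0,R]$, the point $r_0$ is an interior minimum of $w^\vepsi$. Rewriting \eqref{moment1c_radial} in nondivergence form, $w^\vepsi$ satisfies a linear second-order ODE of the form $-\vepsi w^\vepsi_{rr}+\vepsi\frac{n-1}{r}w^\vepsi_r+c(r)w^\vepsi=L_f(r)$ with $c(r)=\frac{1}{nr^{n(n-1)}}(w^\vepsi)^{n-1}\ge 0$; since $L_f\ge 0$ and $L_f\not\equiv 0$ (because $f\ge 0$, $f\not\equiv 0$), the operator has the right sign structure for the strong maximum principle \cite[Theorem 4, page 7]{Protter_Weinberger67} to apply on any compact subinterval of $(0,R)$ containing $r_0$ in its interior. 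The strong maximum principle then forces $w^\vepsi$ to be constant near $r_0$, hence $\equiv 0$ there, and a continuation argument propagates this to a neighborhood of $R$, contradicting $w^\vepsi_r(R)=\vepsi R^{n-1}>0$. (Alternatively, one observes directly from \eqref{moment1c_radial} that at an interior zero $r_0$ of the nonnegative function $w^\vepsi$ one has $w^\vepsi_r(r_0)=0$ and $w^\vepsi_{rr}(r_0)=-\tfrac1\vepsi L_f(r_0)\le 0$, which is incompatible with $r_0$ being a minimum unless $w^\vepsi$ vanishes identically near $r_0$.) Therefore $w^\vepsi>0$ in $(0,R)$.

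Finally, the regularity claims follow by bootstrapping from \eqref{moment1c_radial}. On any compact $[a,b]\subset(0,R)$ the coefficients $r^{-(n-1)}$ and $r^{-n(n-1)}(w^\vepsi)^{n-1}$ are continuous, and $w^\vepsi\in C^1$, so if $f\in C^0((0,R))$ then $L_f\in C^1((0,R))$ and \eqref{moment1c_radial}, solved for $w^\vepsi_{rr}$ as in \eqref{existence13}, expresses $w^\vepsi_{rr}$ as a $C^1$ function of $r$; hence $w^\vepsi\in C^3((0,R))$. Iterating, each extra degree of smoothness of $f$ yields two extra degrees of smoothness of $w^\vepsi$ via the same identity, so $w^\vepsi$ is smooth in $(0,R)$ whenever $f$ is.

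I expect no serious obstacle here, since the corollary is essentially a packaging of previously established facts; the one point requiring care is the application of the strong maximum principle at an interior zero, where one must check that the zeroth-order coefficient is nonnegative (it is, being $\tfrac{1}{nr^{n(n-1)}}(w^\vepsi)^{n-1}$) and that the inhomogeneity has a definite sign ($L_f\ge 0$, $L_f\not\equiv 0$) so that a nontrivial nonnegative solution cannot attain the value $0$ in the interior.
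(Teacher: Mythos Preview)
Your proposal is correct and follows exactly the route the paper indicates: the paper's own proof of this corollary is a single sentence citing the existence theorem, the uniqueness theorem, and the strong maximum principle, and you have simply fleshed these out. The only minor slip is the phrase ``each extra degree of smoothness of $f$ yields two extra degrees of smoothness of $w^\vepsi$''---the bootstrap via \eqref{moment1c_radial} gains one derivative per iteration, not two---but this does not affect the $C^\infty$ conclusion.
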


Recall that $w^\vepsi=r^{n-1}u^\vepsi_r$ where $u^\vepsi$ and $w^\vepsi$ are
solutions of \eqref{moment_radial} and
\eqref{moment_wradial} respectively. Let $w^\vepsi$ be the
unique solution to \eqref{moment_wradial}
as stated in Corollary \ref{existence_cor} and define
\begin{equation}\label{existence19}
u^\vepsi(r):=g(R) - \int_r^R \frac{1}{s^{n-1}} w^\vepsi(s)\, ds
\qquad\forall r\in (0,R).
\end{equation}

We now show that $u^\vepsi$ is a unique monotone increasing classical 
solution of problem \eqref{moment_radial}.

\begin{thm}\label{momnet1_existence}
Suppose $f\in C^0((0,R))$ and $f\geq 0$ in $(0,R)$, then problem 
\eqref{moment_radial}
has a unique monotone increasing classical solution.  Moreover, 
$u^\vepsi$ is smooth provided that $f$ is smooth.
\end{thm}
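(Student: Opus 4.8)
The plan is to transfer all the information established for $w^\vepsi$ in Corollary \ref{existence_cor} back to $u^\vepsi$ via the defining formula \eqref{existence19}, and then to verify one by one the equation \eqref{moment1_radial} and the four boundary/behavior conditions \eqref{moment2_radial}--\eqref{moment4_radial}; uniqueness will follow from the bijective correspondence $u^\vepsi \leftrightarrow w^\vepsi = r^{n-1}u^\vepsi_r$ together with the uniqueness already proved in Theorem \ref{uniqueness_thm}. First I would observe that, with $u^\vepsi$ defined by \eqref{existence19}, differentiation under the integral sign gives $u^\vepsi_r(r) = r^{1-n} w^\vepsi(r)$, hence $r^{n-1}u^\vepsi_r = w^\vepsi$, so the reduction performed in Section \ref{section-2} is exactly reversed. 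Since $w^\vepsi \ge 0$ on $[0,R]$ (and $>0$ on $(0,R)$), we immediately get $u^\vepsi_r \ge 0$, i.e.\ $u^\vepsi$ is monotone increasing. The boundary value $u^\vepsi(R) = g(R)$ is immediate from \eqref{existence19}, and \eqref{moment4_radial} follows because $w^\vepsi_r(R) = \vepsi R^{n-1}$ translates, via $w^\vepsi_r = r^{n-1}u^\vepsi_{rr} + (n-1)r^{n-2}u^\vepsi_r$, into precisely $u^\vepsi_{rr}(R) + \tfrac{n-1}{R}u^\vepsi_r(R) = \vepsi$.

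Next I would address the regularity and the behavior conditions \eqref{moment3_radial} near $r=0$, which is the delicate part. From Corollary \ref{existence_cor}, $w^\vepsi \in C^3((0,R))$ (smooth if $f$ is smooth), so $u^\vepsi_r = r^{1-n}w^\vepsi$ is $C^3$ on $(0,R)$ and hence $u^\vepsi$ is $C^4$ there; this gives the claimed smoothness away from the origin. For the conditions at $r=0$, I would use the estimates \eqref{moment5a_radial}, namely $\partial^j w^\vepsi(r) = o(r^{-(n-1-j)})$ as $r \to 0^+$ for $0 \le j \le \min\{2,n-1\}$, which were derived precisely to control $u^\vepsi_r = r^{1-n}w^\vepsi$, $u^\vepsi_{rr} = r^{1-n}w^\vepsi_r - (n-1)r^{-n}w^\vepsi$, and $u^\vepsi_{rrr}$ (an analogous combination of $w^\vepsi$, $w^\vepsi_r$, $w^\vepsi_{rr}$ against powers of $r$). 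Feeding the $o(\cdot)$ bounds into these expressions yields $u^\vepsi_r(0) = 0$, $|u^\vepsi_{rr}(0)| < \infty$, and $|u^\vepsi_{rrr}(r)| = o(r^{1-n})$ as $r \to 0^+$, which are exactly \eqref{moment3_radial}. I would also note that the limit relation $\lim_{r\to 0^+} r^{n-1}(u^\vepsi_{rrr} + \tfrac{n-1}{r}u^\vepsi_{rr} - \tfrac{n-1}{r^2}u^\vepsi_r) = 0$ used in the derivation of \eqref{moment1b_radial} is consistent with these bounds.

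Then I would recover the fourth order equation \eqref{moment1_radial}. Starting from \eqref{moment1c_radial} satisfied by $w^\vepsi$, or equivalently from \eqref{moment1b_radial}, and substituting $w^\vepsi = r^{n-1}u^\vepsi_r$, one checks that $\tfrac{1}{r^{n-1}}w^\vepsi_r = u^\vepsi_{rr} + \tfrac{n-1}{r}u^\vepsi_r = \tfrac{1}{r^{n-1}}(r^{n-1}u^\vepsi_r)_r$, so $\vepsi r^{n-1}\bigl(\tfrac{1}{r^{n-1}}w^\vepsi_r\bigr)_r = \vepsi r^{n-1}\bigl(\tfrac{1}{r^{n-1}}(r^{n-1}u^\vepsi_r)_r\bigr)_r$, and likewise $\tfrac{1}{nr^{n(n-1)}}(w^\vepsi)^n = \tfrac{1}{n}(u^\vepsi_r)^n$; thus \eqref{moment1b_radial} holds. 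Differentiating \eqref{moment1b_radial} in $r$ (legitimate since $w^\vepsi \in C^3$, hence the left side is $C^1$ on $(0,R)$ when $f \in C^0$) and dividing by $r^{n-1}$ recovers \eqref{moment1_radial}, using $L_f' = r^{n-1}f$ and $\tfrac{1}{r^{n-1}}(\tfrac1n((u^\vepsi_r)^n))_r = \tfrac{1}{nr^{n-1}}((u^\vepsi_r)^n)_r$. Finally, for uniqueness: if $\tilde u^\vepsi$ is any monotone increasing classical solution of \eqref{moment_radial}, then $\tilde w^\vepsi := r^{n-1}\tilde u^\vepsi_r \ge 0$ is a classical solution of \eqref{moment_wradial} by the computation of Section \ref{section-2}, so $\tilde w^\vepsi = w^\vepsi$ by Theorem \ref{uniqueness_thm}; integrating $\tilde u^\vepsi_r = r^{1-n}\tilde w^\vepsi = u^\vepsi_r$ and matching at $r = R$ via \eqref{moment2_radial} forces $\tilde u^\vepsi = u^\vepsi$.

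\textbf{Main obstacle.} The routine parts (equation, $u^\vepsi(R)=g(R)$, \eqref{moment4_radial}, uniqueness) are straightforward algebra once the $w$--$u$ dictionary is written down; the genuine work is the behavior at $r = 0$ --- justifying the pointwise limits $u^\vepsi_r(0)=0$, $|u^\vepsi_{rr}(0)|<\infty$ and the decay rate of $u^\vepsi_{rrr}$ from the $o(\cdot)$ estimates \eqref{moment5a_radial} on $w^\vepsi$ and its derivatives, and checking that the division by the degenerate weights $r^{1-n}$, $r^{-n}$, $r^{1-n-j}$ does not destroy the finiteness. Some care is needed for small $n$ (e.g.\ $n=2$, where only $j \le 1$ is covered by \eqref{moment5a_radial} and one must additionally use equation \eqref{existence13} or \eqref{moment1c_radial} itself to control $w^\vepsi_{rr}$, hence $u^\vepsi_{rrr}$, near $0$), which I would handle by invoking L'Hôpital's rule on the explicit expressions as was done repeatedly in the proof of Theorem \ref{existence_thm}.
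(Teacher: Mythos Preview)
Your proposal is correct and follows essentially the same route as the paper's proof: define $u^\vepsi$ via \eqref{existence19}, verify \eqref{moment_radial} by direct calculation (the paper compresses all of this to a single sentence), and deduce uniqueness from the bijective correspondence between monotone increasing solutions of \eqref{moment_radial} and nonnegative solutions of \eqref{moment_wradial}. One small caveat: the display \eqref{moment5a_radial} in the paper is stated as a \emph{consequence} of the conditions \eqref{moment3_radial} on $u^\vepsi$, not as an a priori property of the $w^\vepsi$ produced by Corollary \ref{existence_cor}, so citing it directly to verify \eqref{moment3_radial} would be circular --- but your ``Main obstacle'' paragraph already names the right fix (L'H\^opital together with the equation \eqref{moment1c_radial} itself to control $w^\vepsi$ and its derivatives against the degenerate weights $r^{1-n}$, $r^{-n}$), which is exactly how the paper handles the analogous limits inside the proof of Theorem \ref{existence_thm}.
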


\begin{proof}
By direct calculations one can easily show that $u^\vepsi$ defined
by \eqref{existence19} satisfies \eqref{moment_radial}.
Since $u^\vepsi_r> 0$ in $(0,R)$, then $u^\vepsi$ is a monotone increasing 
function. Hence, the existence is shown. 

To show uniqueness, we notice that $u^\vepsi$ is 
a monotone increasing classical solution
of problem \eqref{moment_radial}
if and only if $w^\vepsi$ is a nonnegative classical solution of 
problem \eqref{moment_wradial}.
Hence, the uniqueness of \eqref{moment_radial}
follows from the uniqueness of \eqref{moment_wradial}.
\end{proof}

\section{Convexity of vanishing moment approximations} \label{section-4}
The goal of this section is to analyze the convexity of 
the solution $u^\vepsi$ whose existence is proved in 
Theorem \ref{momnet1_existence}. We shall prove that $u^\vepsi$
is strictly convex either in $(0,R)$ or in $(0,R-c_0\vepsi)$
for some $\vepsi$-independent positive constant $c_0$. 
From calculations in Section \ref{section-2} we know that
$D^2u^\vepsi$ only has two distinct eigenvalues: $\lambda_1=u^\vepsi_{rr}$
(with multiplicity $1$) and $\lambda_2=\frac{1}{r}u^\vepsi_r$
(with multiplicity $n-1$).  We have proved that $\lambda_2\geq 0$ 
in $(0,R)$, so it is necessary to show $\lambda_1\geq 0$ in $(0,R)$ 
or in $(0,R-c_0\vepsi)$. In addition, in this section we   
derive some sharp uniform (in $\vepsi$) {\em a priori} estimates 
for the vanishing moment approximations $u^\vepsi$ which will play
an important role not only for establishing the convexity property for $u^\vepsi$,
but also for proving the convergence of $u^\vepsi$ in the next section.

First, we have the following positivity result for $\Del u^\vepsi$.

\begin{thm}\label{convexity_thm1}
Let $u^\vepsi$ be the unique monotone increasing classical solution
of problem \eqref{moment_radial}
and define $w^\vepsi :=r^{n-1} u^\vepsi_r$. Then 
\begin{itemize}
\item[{\rm (i)}] $w^\vepsi_r> 0$ in $(0,R)$.  Consequently, $\Del u^\vepsi >0$ 
in $(0,R)$, for all $\vepsi>0$.
\item[{\rm (ii)}] For any $r_0\in (0,R)$, there exists an $\vepsi_0>0$ such that
$w^\vepsi_r>\vepsi r^{n-1}$ and $\Del u^\vepsi>\vepsi$ in $(r_0,R)$ for 
$\vepsi \in (0,\vepsi_0)$.
\end{itemize}
\end{thm}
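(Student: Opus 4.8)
The plan is to work with the reduced second-order problem \eqref{moment_wradial} for $w^\vepsi = r^{n-1} u^\vepsi_r$, since $\Del u^\vepsi = r^{1-n} w^\vepsi_r$ and the two assertions are exactly statements about the sign (resp.\ lower bound) of $w^\vepsi_r$. For part (i), I would differentiate the first-order-reduced equation \eqref{moment1b_radial}, or equivalently \eqref{moment1c_radial}, to derive a linear second-order ODE satisfied by $v^\vepsi := w^\vepsi_r$. Since $w^\vepsi>0$ in $(0,R)$ by Corollary \ref{existence_cor}, and $f\ge 0$, $f\not\equiv 0$, the differentiated equation should put $v^\vepsi$ in the role of a (strict) supersolution of a linear elliptic operator with nonnegative zeroth-order coefficient: schematically, differentiating \eqref{moment1b_radial} gives
\begin{equation*}
-\vepsi r^{n-1}\Bigl(\tfrac{1}{r^{n-1}} w^\vepsi_r\Bigr)_{rr} + (u^\vepsi_r)^{n-1} w^\vepsi_r \cdot r^{1-n} = r^{n-1} f \ge 0,
\end{equation*}
where I have used $\tfrac1n((u^\vepsi_r)^n)_r = (u^\vepsi_r)^{n-1} u^\vepsi_{rr}$ and rewritten in terms of $w^\vepsi_r$; the coefficient $(u^\vepsi_r)^{n-1} r^{1-n}\ge 0$ since $u^\vepsi_r>0$. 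Then the weak maximum principle \cite[Theorem 2, page 329]{Evans98} gives $\min_{[0,R]} v^\vepsi \ge \min\{0, v^\vepsi(0), v^\vepsi(R)\}$; we have $v^\vepsi(0)=w^\vepsi_r(0)=0$ by \eqref{moment3a_radial} and $v^\vepsi(R) = \vepsi R^{n-1}>0$ by \eqref{moment4a_radial}, so $v^\vepsi\ge 0$ on $[0,R]$. To upgrade to strict positivity in $(0,R)$ I would invoke the strong maximum principle \cite[Theorem 4, page 7]{Protter_Weinberger67}: if $v^\vepsi$ vanished at an interior point it would be constant, forcing $v^\vepsi\equiv 0$, which contradicts the strict inequality in the differentiated equation (this is where $f\not\equiv 0$ is used). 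Hence $w^\vepsi_r>0$ in $(0,R)$ and therefore $\Del u^\vepsi>0$ there.

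For part (ii), the idea is to compare $v^\vepsi = w^\vepsi_r$ against the explicit barrier $\vepsi r^{n-1}$ on an interval bounded away from the origin. Set $\theta^\vepsi := w^\vepsi_r - \vepsi r^{n-1}$. Plugging $\vepsi r^{n-1}$ into the linear operator from part (i) produces a computable right-hand side; subtracting, $\theta^\vepsi$ satisfies a linear equation of the form $-\vepsi r^{n-1}(\tfrac{1}{r^{n-1}}\theta^\vepsi)_{rr} + (\text{nonneg.\ coeff.})\,\theta^\vepsi = r^{n-1} f - (\text{correction})$, with boundary data $\theta^\vepsi(R)=0$ (from \eqref{moment4a_radial}) and $\theta^\vepsi$ bounded near $0$. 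To get a lower bound on $\theta^\vepsi$ in $(r_0,R)$ I would use the uniform-in-$\vepsi$ lower bound on $\Del u^\vepsi$ away from the boundary that follows from convergence of $u^\vepsi$ to the strictly convex viscosity solution $u$ of \eqref{MA_radial} — but since that convergence is proved only later, the cleaner route is self-contained: the $L^2$-type energy estimate in \eqref{existence8}, together with the pointwise bound from \cite[Theorem 3.7]{Gilbarg_Trudinger01}, shows $w^\vepsi$ (hence $u^\vepsi_r$) stays comparable to the solution data on compact subsets, which forces $(u^\vepsi_r)^{n-1}$ to be bounded below by a positive $\vepsi$-independent constant on $[r_0,R]$ when $L_f(r_0)>0$; then on $(r_0,R)$ the zeroth-order term dominates and a maximum-principle / barrier argument gives $\theta^\vepsi \ge -C\vepsi$ for a constant $C=C(r_0,R,L_f,n)$ that can be made strictly less than the relevant threshold by choosing $\vepsi_0$ small. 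From $\theta^\vepsi > 0$ we read off $w^\vepsi_r > \vepsi r^{n-1}$ and $\Del u^\vepsi = r^{1-n} w^\vepsi_r > \vepsi$ in $(r_0,R)$.

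The main obstacle I anticipate is \textbf{part (ii)}: getting a lower bound of the correct form $w^\vepsi_r \ge \vepsi r^{n-1} + (\text{positive})$ requires knowing that $(u^\vepsi_r)^{n-1}$ — equivalently $L_f(r) - \vepsi r^{n-1}(\tfrac1{r^{n-1}}w^\vepsi_r)_r$ up to factors — stays bounded \emph{below} away from zero uniformly in $\vepsi$ on $[r_0,R]$, and this positivity degenerates as $r_0\to 0^+$ (where $L_f\to 0$), which is exactly why the statement only claims the bound on $(r_0,R)$ with $r_0$ fixed. Handling the interaction between the vanishing singular-perturbation term and the boundary layer near $r=R$ (the boundary condition $\Del u^\vepsi = \vepsi$ there) carefully enough to extract the strict inequality, rather than just $\ge$, is the delicate point; the energy estimate \eqref{existence8} and L'H\^opital arguments near $r=0$ used in the existence proof should supply the needed uniform control, but assembling them into a clean barrier comparison is the bulk of the work.
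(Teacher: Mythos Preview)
For part (i) your plan matches the paper's: differentiate the reduced equation to get a linear second-order equation for $\varphi^\vepsi := w^\vepsi_r$, verify it is a supersolution, and apply the weak then strong maximum principle with boundary values $\varphi^\vepsi(0)=0$ and $\varphi^\vepsi(R)=\vepsi R^{n-1}$. Your schematic equation is not quite right, though: differentiating $-\vepsi r^{n-1}\bigl(r^{1-n}w^\vepsi_r\bigr)_r$ produces, via the product rule, a first-order-in-$\varphi^\vepsi$ term $-\vepsi(n-1)r^{n-2}\bigl(r^{1-n}w^\vepsi_r\bigr)_r$ that you dropped. The paper eliminates it by substituting the undifferentiated equation \eqref{moment1c_radial} back in, which also generates the additional nonnegative forcing terms $\tfrac{n-1}{r}L_f$ and $\tfrac{(n-1)^2}{n r^{(n-1)^2+n}}(w^\vepsi)^n$ on the right of \eqref{convexity2}. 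This substitution is the one algebraic step your sketch misses; the rest of (i) is as you describe.

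For part (ii) there is a genuine gap. Your route hinges on showing that $(u^\vepsi_r)^{n-1}$ has an $\vepsi$-independent positive lower bound on $[r_0,R]$, but neither the $L^2$ energy estimate \eqref{existence8} nor the Gilbarg--Trudinger pointwise estimate (which is an \emph{upper} bound) delivers a pointwise lower bound on $u^\vepsi_r$; the convergence results that would give it are proved later and use this theorem, as you yourself note. The paper sidesteps this entirely. Writing out the equation \eqref{convexity2b} for $\psi^\vepsi := w^\vepsi_r - \vepsi r^{n-1}$ explicitly, the only possibly negative piece of the right-hand side is
\[
\frac{(w^\vepsi)^{n-1}\bigl[(n-1)^2 w^\vepsi - \vepsi n r^n\bigr]}{n\, r^{(n-1)^2+n}}
\;=\;(u^\vepsi_r)^{n-1}\Bigl[\frac{(n-1)^2}{n r}\,u^\vepsi_r - \vepsi\Bigr],
\]
and this can be negative only when $u^\vepsi_r < \tfrac{\vepsi n r}{(n-1)^2}$, in which case its magnitude is at most $\vepsi\cdot\bigl(\tfrac{\vepsi n R}{(n-1)^2}\bigr)^{n-1}=O(\vepsi^n)$. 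So the correction from the barrier $\vepsi r^{n-1}$ is \emph{self-limiting} of order $\vepsi^n$ and is dominated on $(r_0,R)$ by the positive source $r^{n-1}f+\tfrac{n-1}{r}L_f$ once $\vepsi$ is small. No lower bound on $u^\vepsi_r$ is needed. With the right side nonnegative, $\psi^\vepsi$ is a supersolution on $(r_0,R)$; the weak maximum principle then reduces the question to the boundary values $\psi^\vepsi(R)=0$ and $\psi^\vepsi(r_0)=r_0^{n-1}\bigl(\Del u^\vepsi(r_0)-\vepsi\bigr)$, the latter handled via part (i), and the strong maximum principle gives the strict inequality. The idea you are missing is precisely this $O(\vepsi^n)$ self-limitation of the barrier defect, which replaces the uniform lower bound on $u^\vepsi_r$ you were reaching for.
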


\begin{proof} We split the proof into two steps.

{\em Step 1:} Since $u^\vepsi$ is monotone increasing and differentiable,
we have $u^\vepsi_r\geq 0$ in $[0,R]$. From the derivation of 
Section \ref{section-2} we know that 
$w^\vepsi:=r^{n-1}u^\vepsi_r$ is the unique nonnegative 
classical solution of \eqref{moment_wradial}.
Let $\varphi^\vepsi := w^\vepsi_r$.
By the definition of the Laplacian $\Del$ we have
\begin{equation}\label{convexity1}
\varphi^\vepsi= w^\vepsi_r =r^{n-1} u^\vepsi_{rr}
+ (n-1) r^{n-2} u^\vepsi_r =r^{n-1}\Del u^\vepsi.
\end{equation}
So $\varphi^\vepsi> 0$ in $(0,R)$ infers $\Del u^\vepsi > 0$ 
in $(0,R)$. 
To show $\varphi^\vepsi> 0$, we differentiate 
\eqref{moment1c_radial} with respect to $r$ to get
\begin{align*}
-\vepsi w^\vepsi_{rrr} 
&+\vepsi(n-1) r^{n-2} \Bigl(\frac{1}{r^{n-1}} w^\vepsi_r \Bigr)_r 
+\Bigl[ \frac{\vepsi(n-1)(n-2)}{r^2} + \frac{(w^\vepsi)^{n-1}}{r^{n(n-1)}}
\Bigr] w^\vepsi_r \\
&\quad -\frac{n-1}{r^{(n-1)^2+n}} (w^\vepsi)^n 
=r^{n-1} f(r) \qquad\mbox{in } (0,R).
\end{align*}
From \eqref{moment1c_radial}, we have
$
\vepsi r^{n-2}\Bigl(\frac{1}{r^{n-1}} w^\vepsi_r \Bigr)_r
=\frac{1}{nr^{(n-1)^2+n}} \bigl(w^\vepsi\bigr)^n
-\frac{1}{r} L_f.
$
Combining the above two equations yields
\begin{align}\label{convexity2a}
-\vepsi w^\vepsi_{rrr} 
+ \Bigl[ \frac{\vepsi(n-1)(n-2)}{r^2} + \frac{(w^\vepsi)^{n-1}}{r^{n(n-1)}}
\Bigr] w^\vepsi_r 
& =r^{n-1}f + \frac{n-1}{r} L_f + \frac{(n-1)^2}{nr^{(n-1)^2+n}} 
\bigl(w^\vepsi\bigr)^n. 
\end{align}
Substituting $w^\vepsi_r = \varphi^\vepsi$ into the above equation we get 
\begin{align}\label{convexity2}
-\vepsi \varphi^\vepsi_{rr} 
&+\Bigl[ \frac{\vepsi(n-1)(n-2)}{r^2} + \frac{(w^\vepsi)^{n-1}}{r^{n(n-1)}}
\Bigr] \varphi^\vepsi \\
&\quad 
=r^{n-1}f + \frac{n-1}{r} L_f + \frac{(n-1)^2}{nr^{(n-1)^2+n}} 
\bigl(w^\vepsi\bigr)^n \geq 0
\nonumber
\end{align}
since $f, L_f, w^\vepsi\geq 0$ in $(0,R)$.
This means that $\varphi^\vepsi$ is a supersolution to a linear uniformly
elliptic differential operator. By the weak maximum principle 
we obtain (cf. \cite[page 329]{Evans98})
\[
\min_{[0,R]} \varphi^\vepsi (r) 
\geq \min\{0, \varphi^\vepsi(0), \varphi^\vepsi(R)\}
=\min\{0,0,R^{n-1} \vepsi\}=0.
\]
Here we have used the fact that 
$\varphi^\vepsi(R)=R^{n-1} \Del u^\vepsi(R)= R^{n-1} \vepsi$.
Hence, $\varphi^\vepsi\geq 0$ in $[0,R]$, so 
$\Del u^\vepsi\geq 0$ in $[0,R]$.

It follows from the strong maximum principle
(cf.\,\cite[Theorem 4, page 7]{Protter_Weinberger67}) that
$\varphi^\vepsi$ cannot attain its nonpositive minimum value $0$ at
any point in $(0,R)$. Therefore, $\varphi^\vepsi > 0$ in $(0,R)$,  
which implies that $\Del u^\vepsi>0$ in $(0,R)$. So assertion (i) holds.

\smallskip
{\em Step 2:} To show (ii), let $\psi^\vepsi:= w^\vepsi_r-\vepsi r^{n-1} 
=r^{n-1}(\Del u^\vepsi-\vepsi)$.  Using the identities 
$w^\vepsi_r = \psi^\vepsi + \vepsi r^{n-1}$ and
$w^\vepsi_{rrr}= \psi^\vepsi_{rr} + \vepsi (n-1)(n-2) r^{n-3},$
we rewrite \eqref{convexity2a} as 
\begin{align}\label{convexity2b}
-\vepsi \psi^\vepsi_{rr} 
&+\Bigl[ \frac{\vepsi(n-1)(n-2)}{r^2} + \frac{(w^\vepsi)^{n-1}}{r^{n(n-1)}}
\Bigr] \psi^\vepsi \\
&=r^{n-1}f + \frac{n-1}{r} L_f 
+ \frac{(w^\vepsi)^{n-1}[(n-1)^2 w^\vepsi -\vepsi n r^n ]}{nr^{(n-1)^2+n}}
\qquad \mbox{in } (0,R). \nonumber
\end{align}
Hence, $\psi^\vepsi$ satisfies a linear uniformly elliptic equation.

Now, on noting that $w^\vepsi\geq 0$ by (i), 
for any $r_0\in (0,R)$ (i.e., $r_0$ is away from $0$), it is easy to
see that there exists an $\vepsi_1>0$ such that the right-hand side 
of \eqref{convexity2b} is nonnegative in $(r_0,R)$ for all $\vepsi\in (0,\vepsi_1)$.
Hence, $\psi^\vepsi$ is a supersolution in $(r_0,R)$ to the uniformly elliptic 
operator on the right-hand side of \eqref{convexity2b}. By the weak maximum principle
we have (cf. \cite[page 329]{Evans98})
\[
\min_{[r_0,R]} \psi^\vepsi (r) 
\geq \min\{0, \psi^\vepsi(r_0), \psi^\vepsi(R)\}
=\min\{0,\Del u^\vepsi(r_0)-\vepsi, 0\}.
\]
Again, here we have used the fact that $\Del u^\vepsi(R)= \vepsi$.

Since $\Del u^\vepsi(r_0)>0$, we can choose 
$\vepsi_0=\min\{\vepsi_1, \frac12 \Del u^\vepsi(r_0)\}$.
We then have $\psi^\vepsi(r_0)=\Del u^\vepsi(r_0)-\vepsi \geq \frac12 \Del u^\vepsi(r_0)>0$
for $\vepsi\in (0,\vepsi_0)$. Thus, $\min_{[r_0,R]} \psi^\vepsi (r) \geq 0$
for $\vepsi\in (0,\vepsi_0)$. Therefore, $w^\vepsi_r\geq \vepsi r^{n-1}$,
and consequently, $\Del u^\vepsi\geq \vepsi$ in $[r_0,R]$ for $\vepsi\in (0,\vepsi_0)$.

Finally, an application of the strong maximum principle
(cf. \cite[Theorem 4, page 7]{Protter_Weinberger67}) yields
that $w^\vepsi_r> \vepsi r^{n-1}$.  Hence, $\Del u^\vepsi> \vepsi$ in $(r_0,R)$
for $\vepsi\in (0,\vepsi_0)$. The proof is complete.
\end{proof}

\begin{remark}
The proof also shows that $\vepsi_0$ decreases (resp. increases) as $r_0$ decreases
(resp. increases), and $v^\vepsi:=\Del u^\vepsi$ takes its minimum value
$\vepsi$ in $[r_0,R]$ at the right end of the interval $r=R$.
\end{remark}

With help of the positivity of $\Del u^\vepsi$, we can derive 
some better uniform estimates (in $\vepsi$) for $w^\vepsi$ and $u^\vepsi$.

\begin{thm}\label{fine_estimates}
Suppose $f\in C^0((0,R))$ and $f\geq 0$ in $(0,R)$. Let $u^\vepsi$ be
the unique monotone increasing classical solution to problem
\eqref{moment1_radial}--\eqref{moment4_radial}.  Define
$w^\vepsi=r^{n-1} u^\vepsi_r$ and $v^\vepsi= \Del u^\vepsi=u^\vepsi_{rr}
+\frac{n-1}{r} u^\vepsi_r$.  Then there holds the following estimates
for sufficiently small $\vepsi>0$:
\begin{alignat*}{2}
&\mbox{\rm (i)}  &&\,\|u^\vepsi\|_{C^0([0,R])} 
+\int_0^R |u^\vepsi_r|^n\, dr \leq C_0, \\
&\mbox{\rm (ii)} &&\,\|u^\vepsi\|_{C^1([0,R])} 
+ \|w^\vepsi\|_{C^0([0,R])} \leq C_1, \\
&\mbox{\rm (iii)} &&\,\|w^\vepsi_r\|_{C^0([0,R])}\leq \frac{C_2}{\vepsi}, \\
&\mbox{\rm (iv)}  &&\|v^\vepsi\|_{C^0([r_0,R])}\leq \frac{C_3}{\vepsi r_0^{n-1}} 
\quad\forall 0<r_0\leq R, \\
&\mbox{\rm (v)} &&\,\|v^\vepsi_r\|_{C^0([r_0,R])}
\leq \frac{C_4}{\vepsi r_0^{(n-1)^2}} \quad\forall 0<r_0\leq R, \\
&\mbox{\rm (vi)} &&\, \int_0^R |w^\vepsi_r(r)|^2\, dr
+ \int_0^R r^{2(n-1)} |v^\vepsi(r)|^2  \,dr \leq \frac{C_5}{\vepsi}, \\
&\mbox{\rm (vii)} &&\,\vepsi \int_0^R r^{n-2-\alpha} |v^\vepsi(r)|^2 \, dr 
+ \int_0^R \frac{1}{r^\alpha}(u^\vepsi_r(r))^n v^\vepsi(r)\, dr 
\leq \frac{C_6}{\vepsi} \quad\forall \alpha< n-1, 
\end{alignat*}
\begin{alignat*}{2}
&\mbox{\rm (viii)} &&\,\vepsi \int_0^R r^{n-1} |v^\vepsi_r(r)|^2 \, dr 
+ \int_0^R (u^\vepsi_r(r))^{n-1} |v^\vepsi(r)|^2\, dr 
\leq  \frac{C_7}{\vepsi} \quad\mbox{for } n\geq 3, \\
&\mbox{\rm (ix)} &&\,\,\vepsi\int_0^R r^{2-\alpha} |v^\vepsi_r(r)|^2 \, dr 
+ \int_0^R r^{1-\alpha} u^\vepsi_r(r) |v^\vepsi(r)|^2\, dr\leq \frac{C_8}{\vepsi} 
\quad\mbox{for } n=2,\, \alpha<1,
\end{alignat*}
where $C_j=C_j(R,f,n)>0$ for $j=0,1,2,\cdots,8$ are $\vepsi$-independent
positive constants.
\end{thm}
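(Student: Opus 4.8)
The plan is to establish the estimates in the stated order, since each one feeds the next, and to exploit the positivity results from Theorem \ref{convexity_thm1} ($w^\vepsi_r\ge 0$, hence $\Del u^\vepsi\ge 0$, i.e.\ $v^\vepsi\ge 0$) together with the already-derived identity \eqref{moment1b_radial}, namely $-\vepsi r^{n-1}\bigl(r^{1-n}(r^{n-1}u^\vepsi_r)_r\bigr)_r+\tfrac1n(u^\vepsi_r)^n=L_f$, and the second-order equations \eqref{moment1c_radial} for $w^\vepsi$, \eqref{convexity2a}--\eqref{convexity2} for $\varphi^\vepsi:=w^\vepsi_r$. First I would prove (i): test \eqref{moment1b_radial} against a suitable multiplier (e.g.\ $u^\vepsi_r$ itself, or $1$) and integrate; the $\det$-term gives $\int(u^\vepsi_r)^n$ while the perturbation term integrates to a boundary contribution of size $O(\vepsi^2)$ because of \eqref{moment4_radial} and \eqref{moment3_radial}, so that $\int_0^R|u^\vepsi_r|^n\,dr\le C$ uniformly; then $\|u^\vepsi\|_{C^0}$ follows from \eqref{existence19} and H\"older. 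For (ii), once $\int(u^\vepsi_r)^n$ is bounded, a pointwise bound on $u^\vepsi_r$ (hence on $w^\vepsi=r^{n-1}u^\vepsi_r$ and on $\|u^\vepsi\|_{C^1}$) comes from the monotonicity of $r\mapsto u^\vepsi_r$ that is built into $w^\vepsi_r\ge0$: since $u^\vepsi_r$ is increasing its sup is $u^\vepsi_r(R)$, and the boundary condition \eqref{moment4_radial} plus $v^\vepsi(R)=\vepsi$ pins $u^\vepsi_r(R)$ in terms of $u^\vepsi_{rr}(R)$, which can be controlled via the equation; alternatively one extracts the bound on $u^\vepsi_r(R)$ directly from the energy identity used for (i).

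Next, (iii) is essentially the energy estimate for $\varphi^\vepsi=w^\vepsi_r$: multiply \eqref{convexity2} by $\varphi^\vepsi$ and integrate over $(0,R)$. The left side produces $\vepsi\int|\varphi^\vepsi_r|^2$ plus nonnegative zeroth-order terms (using $w^\vepsi\ge0$ and the $\frac{\vepsi(n-1)(n-2)}{r^2}$ coefficient), plus a boundary term $\vepsi\varphi^\vepsi_r\varphi^\vepsi|_0^R$ that is handled using $\varphi^\vepsi(R)=\vepsi R^{n-1}$ and the $o(\cdot)$ behaviour \eqref{moment5a_radial} at $r=0$; the right side is $\int(\text{RHS})\varphi^\vepsi$ with RHS $=r^{n-1}f+\frac{n-1}{r}L_f+\frac{(n-1)^2}{nr^{(n-1)^2+n}}(w^\vepsi)^n$, which after Cauchy--Schwarz and Young (absorbing the $\vepsi\int|\varphi^\vepsi_r|^2$ part) together with (ii) leaves $\int|\varphi^\vepsi_r|^2\le C/\vepsi$ — wait, more precisely this is exactly (vi), so I would do (vi) here and then get the $C^0$ bound (iii) pointwise from $\varphi^\vepsi(r)=\varphi^\vepsi(R)-\int_r^R\varphi^\vepsi_r$ combined with the $L^2$ bound on $\varphi^\vepsi_r$, or directly from the maximum-principle structure of \eqref{convexity2} and a pointwise elliptic estimate as in \cite[Theorem 3.7]{Gilbarg_Trudinger01}, giving $\|w^\vepsi_r\|_{C^0}\le C/\vepsi$. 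Since $v^\vepsi=\Del u^\vepsi=r^{1-n}w^\vepsi_r$, item (iv) is immediate: $\|v^\vepsi\|_{C^0([r_0,R])}\le r_0^{1-n}\|w^\vepsi_r\|_{C^0}\le C/(\vepsi r_0^{n-1})$. For (v), differentiate once more (i.e.\ use \eqref{convexity2a} solved for $w^\vepsi_{rrr}$, or differentiate the relation $v^\vepsi=r^{1-n}w^\vepsi_r$ and use \eqref{convexity2}) to express $v^\vepsi_r$ in terms of $v^\vepsi$, $w^\vepsi$, $f$, $L_f$ with coefficients that are powers of $r$; inserting (ii)--(iv) and the worst power $r^{-(n-1)^2}$ that appears yields $\|v^\vepsi_r\|_{C^0([r_0,R])}\le C/(\vepsi r_0^{(n-1)^2})$.

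The weighted integral bounds (vii)--(ix) are the more delicate part and I expect them to be the main obstacle, because the weights $r^{-\alpha}$, $r^{n-2-\alpha}$ etc.\ are chosen to sit right at the borderline of integrability ($\alpha<n-1$, resp.\ $\alpha<1$ when $n=2$) and one must be careful that no term blows up at $r=0$. The strategy is the same weighted-energy trick: for (vii), rewrite the equation for $v^\vepsi$ — obtained from \eqref{moment1b_radial} by noting $r^{1-n}(r^{n-1}u^\vepsi_r)_r=v^\vepsi$, so $-\vepsi r^{n-1}(v^\vepsi)_r' \cdots +\frac1n(u^\vepsi_r)^n=L_f$ in appropriate form, or equivalently use \eqref{moment1c_radial} — multiply by the weight $r^{-\alpha}v^\vepsi$ (for (vii)) and integrate by parts; the perturbation term yields $\vepsi\int r^{n-2-\alpha}|v^\vepsi_r|^2$ plus (after moving derivatives) $\vepsi\int r^{n-3-\alpha}(\cdots)|v^\vepsi|^2$ terms whose sign one checks, the nonlinear term yields $\int r^{-\alpha}(u^\vepsi_r)^n v^\vepsi$, and the right side $\int r^{-\alpha}L_f v^\vepsi$ is bounded by Cauchy--Schwarz using $|L_f(r)|\le C r^n$ near $0$ so that $r^{-\alpha}L_f$ is integrable precisely when $\alpha<n+1$ (comfortably more than needed) and then Young's inequality absorbs the $v^\vepsi$-factor against the nonnegative nonlinear term or a Poincar\'e-controlled piece, at the cost of a $1/\vepsi$. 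Items (viii) and (ix) are the $n\ge3$ and $n=2$ specializations where instead of a pure power weight one multiplies by $u^\vepsi_r$ (or $r^{1-\alpha}u^\vepsi_r$) times $v^\vepsi$ and uses $(u^\vepsi_r)^{n-1}=r^{-(n-1)^2}(w^\vepsi)^{n-1}$ together with the bound $w^\vepsi\le C r^{n-1}$ near $0$ (a consequence of $u^\vepsi_r$ bounded, item (ii)) to control the resulting weight; the separation of the $n=2$ case is forced because the coefficient $\vepsi(n-1)(n-2)/r^2$ in \eqref{convexity2} vanishes when $n=2$, removing a term that was available for absorption in higher dimensions. Throughout, the recurring technical nuisance — and the place where care is genuinely needed — is verifying that every boundary term at $r=0$ vanishes, for which one invokes \eqref{moment5a_radial}, \eqref{moment3_radial}, and L'H\^opital exactly as was done repeatedly in the proof of Theorem \ref{existence_thm}.
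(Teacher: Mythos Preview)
Your overall strategy is on target and most pieces are correct, but there is a genuine gap in your argument for (ii), and it is precisely the step that explains the hypothesis ``sufficiently small $\vepsi$'' in the statement. You claim that $w^\vepsi_r\ge 0$ forces $r\mapsto u^\vepsi_r$ to be increasing; this is false, since $w^\vepsi_r=r^{n-1}u^\vepsi_{rr}+(n-1)r^{n-2}u^\vepsi_r$ only says that $w^\vepsi=r^{n-1}u^\vepsi_r$ is increasing, not $u^\vepsi_r$ itself (indeed, Theorem~\ref{convexity_thm} later shows $u^\vepsi_{rr}$ can be negative near $r=R$). What monotonicity of $w^\vepsi$ does give is $\max_{[0,R]}w^\vepsi=w^\vepsi(R)=R^{n-1}u^\vepsi_r(R)$, so the task reduces to bounding $u^\vepsi_r(R)$ uniformly in $\vepsi$. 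Your suggested routes---controlling $u^\vepsi_{rr}(R)$ ``via the equation,'' or reading $u^\vepsi_r(R)$ off the energy identity for (i)---do not close: the former leads to second derivatives of $w^\vepsi$ at $R$ which are not yet controlled, and the latter only bounds $\int_0^R(u^\vepsi_r)^n\,dr$, not a pointwise value.

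The missing ingredient is a sign. Integrating \eqref{v_eqn_0} over $(0,R)$ gives $(u^\vepsi_r(R))^n=nL_f(R)+\vepsi nR^{n-1}v^\vepsi_r(R)$, and the paper invokes part~(ii) of Theorem~\ref{convexity_thm1}: for $\vepsi$ small, $v^\vepsi>\vepsi$ on $(\tfrac{R}{2},R)$ while $v^\vepsi(R)=\vepsi$, so $v^\vepsi$ attains its minimum on that interval at $r=R$, hence $v^\vepsi_r(R)\le 0$ and therefore $(u^\vepsi_r(R))^n\le nL_f(R)$. The paper explicitly flags this as the only place where smallness of $\vepsi$ enters. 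A secondary remark: your route to (vi) via testing \eqref{convexity2} with $\varphi^\vepsi$ yields $\vepsi\int|\varphi^\vepsi_r|^2$ on the left rather than the required $\int|\varphi^\vepsi|^2=\int|w^\vepsi_r|^2$, and leaves on the right a term $\sim\int r^{n-2}(u^\vepsi_r)^n v^\vepsi$ of (vii) type not yet controlled; the paper instead obtains (vi) more directly by testing \eqref{moment1c_radial} with $w^\vepsi$, where the right-hand side is simply $\int L_f\, w^\vepsi$ and the already-established bound on $w^\vepsi(R)$ handles the boundary term.
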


\begin{proof}
We divide the proof into five steps

{\em Step 1}: Since $u^\vepsi$ is monotone increasing,
\begin{align}\label{v_eqn_1}
\max_{[0,R]} u^\vepsi(r) \leq u^\vepsi(R)=g(R).
\end{align}
%
On noting that $w^\vepsi$ satisfies equation \eqref{moment1c_radial},
integrating \eqref{moment1c_radial} over $(0,R)$ and using integration by
parts on the first term on the left-hand side yields
\begin{align*} 
-\vepsi w^\vepsi_r(R) + \vepsi (n-1) \int_0^R \frac{1}{r} w^\vepsi(r)\, dr 
+ \frac{1}{n} \int_0^R \Bigl[\frac{w^\vepsi(r)}{r^{n-1}} \Bigr]^n\, dr
= \int_0^R L_f(r)\, dr.
\end{align*}

Because $w^\vepsi_r(R)=\vepsi R^{n-1}$ and $w^\vepsi\geq 0$, the above
equation and the relation $w^\vepsi=r^{n-1} u^\vepsi_r$ imply that
\begin{align} \label{v_eqn_7}
\int_0^R \Bigl|\frac{w^\vepsi(r)}{r^{n-1}} \Bigr|^n\, dr
= \int_0^R \bigl|u^\vepsi_r(r) \bigr|^n\, dr
\leq n R\bigl[L_f(R) + \vepsi^2 R^{n-2} \bigr]. 
\end{align}
It then follows from \eqref{v_eqn_1}, \eqref{v_eqn_7} and \eqref{existence19} that
\begin{align} \label{v_eqn_8}
g(R)-nR\bigl[ L_f(R) + \vepsi^2 R^{n-2} \bigr]^{\frac{1}{n}} 
\leq u^\vepsi(r) \leq g(R)
\qquad\forall r\in [0,R].
\end{align}
Hence, $u^\vepsi$ is uniformly bounded (in $\vepsi$) in $[0,R]$, and (i) holds.

\smallskip
{\em Step 2}: Let 
$v^\vepsi:=\Del u^\vepsi = u^\vepsi_{rr} + \frac{n-1}{r} u^\vepsi_r
= \frac{1}{r^{n-1}} \bigl(r^{n-1} u^\vepsi_r\bigr)_r.$
By \eqref{moment1_radial} we have
\begin{align}\label{v_eqn_0}
-\vepsi \bigl( r^{n-1} v^\vepsi_r \bigr)_r  
+ \frac{1}{n} \bigl((u^\vepsi_r)^n\bigr)_r =r^{n-1} f \qquad \mbox{in } (0,R).
\end{align}
It was proved in the previous theorem that $v^\vepsi> \vepsi$ 
in $(\frac{R}2,R)$ for sufficiently small $\vepsi>0$,
and it takes its minimum value $\vepsi$ at $r=R$.  Hence we have
$v^\vepsi_r(R)\leq 0$. We note that this is the only place in the proof 
where we may need to require $\vepsi$ to be sufficiently small.
Integrating \eqref{v_eqn_0} over $(0,R)$ yields
\begin{align*} 
-\vepsi r^{n-1} v^\vepsi_r \Bigr|_{r=0}^{r=R}  
+ \frac{1}{n} (u^\vepsi_r)^n\Bigr|_{r=0}^{r=R} =L_f(R). 
\end{align*}
We then have
$\bigl(u^\vepsi_r(R)\bigr)^n
= n L_f(R) +\vepsi n R^{n-1} v^\vepsi_r(R) \leq  nL_f(R),$
and therefore,
\begin{align} \label{v_eqn_2}
u^\vepsi_r(R)=
\bigl|u^\vepsi_r(R)\bigr|
\leq  \bigl(nL_f(R)\bigr)^{\frac{1}{n}}.
\end{align}
Here, we have used boundary condition \eqref{moment3_radial} and
the fact that $v^\vepsi_r(R)\leq 0$ and $u^\vepsi_r\geq 0$.

By the definition of $w^\vepsi(r):=r^{n-1} u^\vepsi_r(r)$, we have
\begin{align} \label{v_eqn_3}
w^\vepsi(R)=
\bigl|w^\vepsi(R)\bigr|
\leq R^{n-1} \bigl|u^\vepsi_r(R)\bigr|
\leq R^{n-1} \bigl(nL_f(R)\bigr)^{\frac{1}{n}}.
\end{align}
Using the identity
$v^\vepsi(r)=\Del u^\vepsi(r)=u^\vepsi_{rr}(r) +\frac{n-1}{r} u^\vepsi_r(r),$
we obtain
\[
u^\vepsi_{rr}(R)=\Del u^\vepsi(R)-\frac{n-1}{R} u^\vepsi_r(R)
=\vepsi-\frac{n-1}{R} u^\vepsi_r(R). 
\]
Therefore,
\begin{align} \label{v_eqn_4}
\bigl|u^\vepsi_{rr}(R)\bigr|
\leq \vepsi + \frac{n-1}{R} \bigl|u^\vepsi_r(R)\bigr|
\leq \vepsi + \frac{n-1}{R}  \bigl(nL_f(R)\bigr)^{\frac{1}{n}}.
\end{align}

\smallskip
{\em Step 3}: From Theorem \ref{convexity_thm1} we have that 
$w^\vepsi_r(r)\geq 0$ in $(0,R)$, and hence,
$w^\vepsi$ is monotone increasing. Consequently,
\begin{align} \label{v_eqn_5}
\max_{[0,R]} w^\vepsi(r)
= \max_{[0,R]} \bigl|w^\vepsi(r)\bigr|
\leq w^\vepsi(R) \leq R^{n-1} \bigl(nL_f(R)\bigr)^{\frac{1}{n}}.
\end{align}

Evidently, \eqref{v_eqn_5} and the relation $w^\vepsi(r)=r^{n-1}u^\vepsi_r(r)$
as well as $\lim_{r\to 0^+} u^\vepsi_r(r)=0$ imply that there exists
$r_0>0$ such that
\begin{align} \label{v_eqn_6}
\max_{[0,R]} u^\vepsi_r(r)
= \max_{[0,R]} \bigl|u^\vepsi_r(r)\bigr|
\leq \frac12 + \Bigl(\frac{R}{r_0} \Bigr)^{n-1} \bigl(nL_f(R)\bigr)^{\frac{1}{n}}.
\end{align}
Hence, (ii) holds.  In addition, since $w^\vepsi_r$ satisfies the linear elliptic equation 
\eqref{convexity2a}, by the pointwise estimate for linear elliptic equations
\cite[Theorem 3.7]{Gilbarg_Trudinger01}, we have
\begin{align}\label{v_eqn_6a}
\max_{[0,R]} |w^\vepsi_r(r)| \leq \vepsi R^{n-1} 
+\frac{1}{\vepsi} \Bigl( R^{n-1}\|f\|_{L^\infty} &+ (n-1) \|r^{-1}L_f\|_{L^\infty}\\
& +\frac{(n-1)^2}{n} \|r^{-1} (u^\vepsi_r)^n\|_{L^\infty} \Bigr).  \nonumber
\end{align}
Since $w^\vepsi_r=r^{n-1}\Del u^\vepsi=:r^{n-1}v^\vepsi$, it follows 
from \eqref{v_eqn_6a} that for any $r_0 >0$ there holds
\begin{align}\label{v_eqn_6b}
\max_{[r_0,R]} |v^\vepsi(r)|
&=\max_{[r_0,R]} |\Del u^\vepsi(r)| \\
&\leq \vepsi \Bigl(\frac{R}{r_0}\Bigr)^{n-1} 
+\frac{1}{\vepsi r_0^{n-1}} \Bigl(R^{n-1} \|f\|_{L^\infty} 
+ (n-1) \|r^{-1}L_f\|_{L^\infty} \nonumber \\
&\hskip 0.8in
+\frac{(n-1)^2}{n} \|r^{-1} (u^\vepsi_r)^n\|_{L^\infty} \Bigr).  \nonumber
\end{align}
Thus, (iii) and (iv) are true.

Integrating \eqref{v_eqn_0} over $(0,r)$ yields
\begin{align}\label{v_eqn_6c}
-\vepsi r^{n-1} v^\vepsi_r + \frac{1}{n} (u^\vepsi_r)^n
=L_f \qquad\mbox{in } (0,R).
\end{align}
By \eqref{v_eqn_6c} and \eqref{v_eqn_6} we conclude that
for any $r_0>0$ there holds
\begin{align} \label{v_eqn_6d}
\max_{[r_0,R]} \bigl|v^\vepsi_r(r)\bigr|
=\max_{[r_0,R]} \bigl| (\Del u^\vepsi(r))_r\bigr|
\leq \frac{1}{\vepsi}
\Bigl( 1+\Bigl(\frac{R}{r_0} \Bigr)^{n(n-1)} \Bigr) \frac{L_f(R)}{r_0^{n-1}}.  
\end{align}
Therefore the estimate (v) holds.

\smallskip
{\em Step 4}: Testing \eqref{moment1c_radial} with $w^\vepsi$ 
and integrating by parts twice on the first term on the left-hand side, we get
\begin{align*} 
&-\vepsi^2 R^{n-1} w^\vepsi(R) + \frac{\vepsi}2 \int_0^R |w^\vepsi_r(r)|^2\, dr
+\frac{\vepsi (n-1)}{2R} [w^\vepsi(R)]^2 \\
&+ \int_0^R \frac{\vepsi (n-1)}{2r^2} |w^\vepsi(r)|^2\, dr
+\int_0^R \frac{1}{nr^{n(n-1)}} |w^\vepsi(r)|^{n+1}\, dr
=\int_0^R L_f(r) w^\vepsi(r)\, dr.
\end{align*}
Combing the above equation and \eqref{v_eqn_5} we obtain
\begin{align*}
\frac{\vepsi}2 \int_0^R |w^\vepsi_r(r)|^2\, dr
&+\int_0^R \frac{\vepsi (n-1)}{2r^2} |w^\vepsi(r)|^2\, dr
+\int_0^R \frac{1}{nr^{n(n-1)}} |w^\vepsi(r)|^{n+1}\, dr \\
&\hspace{1cm}\leq R\bigl[\vepsi^2 R^{n-2}+ L_f(R)\bigr]\,\bigl(nL_f(R)\bigr)^{\frac{1}{n}}. 
\nonumber
\end{align*}
Consequently, we have
\begin{align*}
&\frac{\vepsi}2 \int_0^R |r^{n-1}\Del u^\vepsi(r)|^2\, dr
+\frac{\vepsi (n-1)}{2} \int_0^R r^{2(n-2)}  |u^\vepsi_r(r)|^2\, dr \\
&\qquad\qquad
+\frac{1}{n}\int_0^R r^{n-1} |u^\vepsi_r(r)|^{n+1}\, dr
\leq R\bigl[\vepsi^2 R^{n-2}+ L_f(R)\bigr]\,\bigl(nL_f(R)\bigr)^{\frac{1}{n}}. 
\nonumber
\end{align*}
Hence, (vi) holds.

\smallskip
{\em Step 5}: 
For any real number $\alpha < n-1$, testing \eqref{v_eqn_6c}
with $r^{-\alpha} v^\vepsi$ and using $ v^\vepsi(R)=\vepsi$ we obtain 
\begin{align} \label{v_eqn_11}
-\frac{\vepsi^3}{2} R^{n-1-\alpha}  
&+\frac{\vepsi (n-1-\alpha)}{2} \int_0^R r^{n-2-\alpha} |v^\vepsi(r)|^2 \, dr \\
&+ \int_0^R \frac{1}{nr^\alpha}(u^\vepsi_r(r))^n v^\vepsi(r)\, dr 
=\int_0^R \frac{1}{r^\alpha} L_f(r) v^\vepsi(r)\, dr.  \nonumber
\end{align} 
On noting that $v^\vepsi\geq 0$, $u^\vepsi_r\geq 0$, and 
$L_f(r) =\int_0^r s^{n-1} f(r)\, dr 
\leq \frac{r^n}{n}\|f\|_{L^\infty},$ 
it follows from \eqref{v_eqn_11} that
\begin{align} \label{v_eqn_12}
&\frac{\vepsi (n-1-\alpha)}{4} \int_0^R r^{n-2-\alpha} |v^\vepsi(r)|^2 \, dr 
+ \frac{1}{n}\int_0^R \frac{1}{r^\alpha}(u^\vepsi_r(r))^n v^\vepsi(r)\, dr  \\
&\hskip 0.7in \leq \frac{\vepsi^3}{2} R^{n-1-\alpha} 
+ \frac{R^{n+3-\alpha} \|f\|_{L^\infty}^2}{\vepsi n^2(n-1-\alpha)(n+3-\alpha)}\, dr
\qquad\forall  \alpha< n-1.
\nonumber
\end{align}
This gives last estimate gives us (vii).

Next, recall that 
$v^\vepsi :=\Del u^\vepsi = u^\vepsi_{rr} + \frac{n-1}{r} u^\vepsi_r.$
Therefore, we can rewrite \eqref{v_eqn_0} as follows:
\begin{align*}
-\vepsi \bigl( r^{n-1} v^\vepsi_r \bigr)_r  
+ (u^\vepsi_r)^{n-1} v^\vepsi 
=r^{n-1} f + \frac{n-1}{r} (u^\vepsi_r)^n
\qquad\mbox{in } (0,R).
\end{align*}
Testing the above equation with $r^\beta v^\vepsi$ for
$\beta >1-n$ and using $ v^\vepsi(R)=\vepsi$, we get
\begin{align*}
&-\vepsi^2 R^{n-1+\beta} v^\vepsi_r(R) 
+\vepsi \int_0^R r^{n-1+\beta} |v^\vepsi_r(r)|^2 \, dr 
+\frac{\vepsi^3\beta R^{n+\beta-2}}{2} \\
&-\frac{\vepsi\beta (n+\beta-2)}{2}\int_0^R r^{n+\beta-3} |v^\vepsi(r)|^2 \, dr
+ \int_0^R r^\beta(u^\vepsi_r(r))^{n-1} |v^\vepsi(r)|^2\, dr \\
&\qquad =\int_0^R \Bigl[ r^{n-1+\beta} f(r) 
+ \frac{n-1}{r^{1-\beta}} (u^\vepsi_r(r))^n\Bigr] v^\vepsi(r)\, dr.
\end{align*} 
It then follows that
\begin{align}\label{v_eqn_13}
&-\vepsi^2 R^{n-1+\beta} v^\vepsi_r(R) 
+\vepsi \int_0^R r^{n-1+\beta} |v^\vepsi_r(r)|^2 \, dr 
+ \frac{\vepsi^3\beta R^{n+\beta-2}}{2} \\
&\qquad-\frac{\vepsi\beta (n+\beta-2)}{2}\int_0^R r^{n+\beta-3} |v^\vepsi(r)|^2 \, dr 
+ \int_0^R r^\beta(u^\vepsi_r(r))^{n-1} |v^\vepsi(r)|^2\, dr
\nonumber \\
&\qquad \leq \frac{\vepsi}{2} \int_0^R r^{n-1+\beta} |v^\vepsi_r(r)|^2 \, dr 
+\frac{1}{2\vepsi} \int_0^R r^{n-1+\beta} |f(r)|^2\, dr \nonumber \\
&\hskip 1in
+ (n-1) \int_0^R \frac{1}{r^{1-\beta}} (u^\vepsi_r(r))^n  v^\vepsi(r)\, dr.
\nonumber
\end{align}  

To continue, we consider the cases $n=2$ and $n>2$ separately.
First, for $n> 2$, 
it follows 
from \eqref{v_eqn_12} with $\alpha=1$ and \eqref{v_eqn_13} with 
$\beta=0$ that 
\begin{align}\label{v_eqn_14}
&\frac{\vepsi}{2} \int_0^R r^{n-1} |v^\vepsi_r(r)|^2 \, dr 
+ \int_0^R (u^\vepsi_r(r))^{n-1} |v^\vepsi(r)|^2\, dr \\
&\qquad \leq \frac{1}{2\vepsi} \int_0^R r^{n-1} |f(r)|^2\, dr 
+ R^{n-2}\Bigl[\frac{\vepsi^3 n(n-1)}{2} 
+ \frac{R^{4} \|f\|_{L^\infty}^2}{\vepsi(n^2-4)}\, dr \Bigr]. 
\nonumber
\end{align}
When $n=2$, we note that $\alpha=1$ is not allowed in \eqref{v_eqn_12}.
Let $\alpha< 1$ be fixed in \eqref{v_eqn_12}, set $\beta=1-\alpha$
in \eqref{v_eqn_13} we obtain
\begin{align}\label{v_eqn_15}
&\frac{\vepsi}{2} \int_0^R r^{2-\alpha} |v^\vepsi_r(r)|^2 \, dr 
+ \int_0^R r^{1-\alpha} u^\vepsi_r(r) |v^\vepsi(r)|^2\, dr \\
&\qquad \leq \frac{1}{2\vepsi} \int_0^R r^{2-\alpha} |f(r)|^2\, dr 
+ 2 R^{1-\alpha} \Bigl[\vepsi^3 
+ \frac{R^4 \|f\|_{L^\infty}^2}{\vepsi(1-\alpha)(5-\alpha)}\, dr \Bigr]. 
\nonumber
\end{align}
Hence, (viii) and (ix) hold. The proof is complete.
\end{proof}

We now state and prove the following convexity result for 
the vanishing moment approximation $u^\vepsi$.

\begin{thm}\label{convexity_thm}
Suppose $f\in C^0((0,R))$ and there exists a 
positive constant $f_0$ such that $f\geq f_0$ on $[0,R]$.
Let $u^\vepsi$ denote the unique monotone increasing classical solution to 
problem \eqref{moment1_radial}--\eqref{moment4_radial}.  
\begin{enumerate}
\item[(i)] If $n=2,3$, then either $u^\vepsi$ is strictly 
convex in $(0,R)$ or there exists an $\vepsi$-independent positive constant 
$c_0$ such that $u^\vepsi$ is strictly convex in $(0,R-c_0\vepsi)$.
\item [(ii)] If $n>3$, then there exists a monotone decreasing sequence 
$\{s_j\}_{j\geq 0} \subset (0,R)$ and two corresponding sequences 
$\{\vepsi_j\}_{j\geq 0} \subset (0,1)$, which is also monotone deceasing,
and $\{r_j^*\}_{j\geq 0} \subset (0,R)$ satisfying $s_j\searrow 0^+$ 
as $j\to \infty$ and $u^\vepsi_{rr}(s_j)\geq 0$ and $R-r_j^*=O(\vepsi)$ 
such that for each $j\geq 0$, $u^\vepsi$ is strictly 
convex in $(s_j, r_j^*)$ for all $\vepsi\in (0,\vepsi_j)$.
\end{enumerate}
\end{thm}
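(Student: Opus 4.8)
The Hessian $D^2u^\vepsi$ has only the two eigenvalues $\lambda_1=u^\vepsi_{rr}$ and $\lambda_2=\tfrac1r u^\vepsi_r$, and $\lambda_2>0$ in $(0,R)$ is already established, so on any subinterval strict convexity of $u^\vepsi$ is equivalent to $\mu:=u^\vepsi_{rr}>0$ there. The plan is to derive a linear second-order ODE for a weighted version of $\mu$ and then run the weak and strong maximum principles on a suitable interval $(\sigma,\rho)$. Starting from the differentiated reduced equation \eqref{v_eqn_0} and eliminating $v^\vepsi=\Del u^\vepsi$ through $v^\vepsi=\mu+\tfrac{n-1}{r}u^\vepsi_r$ one obtains the exact identity
\[
\mu_{rr}+\frac{2(n-1)}{r}\mu_r+\frac{(n-1)(n-3)}{r^2}(\mu-\lambda_2)=\frac1\vepsi\bigl(\lambda_2^{n-1}\mu-f\bigr),
\]
equivalently, with $P:=r^n\mu=r^nu^\vepsi_{rr}$,
\[
\vepsi P_{rr}-\frac{2\vepsi}{r}P_r-\Bigl[\frac{(n-3)\vepsi}{r^2}+\lambda_2^{n-1}\Bigr]P=(n-1)(n-3)\vepsi\,r^{n-3}u^\vepsi_r-r^nf .
\]
Two sign observations drive everything. (a) For $n\ge 3$ the bracket multiplying $P$ is nonnegative on all of $(0,R)$; for $n=2$ one keeps $\mu$ itself, whose equation carries the (nonnegative) coefficient $\lambda_2+\vepsi/r^2$. (b) The right-hand side of the $P$-equation is $\le 0$ on all of $(0,R)$ when $n=2,3$ (it is $-\vepsi u^\vepsi_r/r-r^2f$, resp.\ $-r^3f$), whereas for $n>3$ it is $\le 0$ precisely for $r\ge\gamma(\vepsi)$, and since $f\ge f_0>0$ and $u^\vepsi_r\le C_1$ (Theorem~\ref{fine_estimates}) one has $\gamma(\vepsi)\le\kappa\vepsi^{1/3}$ with $\kappa$ independent of $\vepsi$.

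\textbf{The right endpoint.} If $P(R)=R^nu^\vepsi_{rr}(R)\ge 0$ we land in the first alternative of~(i). Suppose $P(R)<0$. I first show the set $\{P<0\}$ is a single interval $(\rho,R]$ abutting $r=R$: a negative excursion of $P$ on some $[\alpha,\beta]\subset[\gamma(\vepsi),R)$ would have $P(\alpha)=P(\beta)=0$ and, by (a)--(b) and the weak maximum principle \cite{Evans98}, would force $P\ge 0$ on $[\alpha,\beta]$---a contradiction; since moreover $P(r)\to 0$ as $r\to 0^+$ and (for $n>3$) $u^\vepsi_{rr}(0)>0$ forces $P>0$ near $0$, one gets $\rho>0$ and $P(\rho)=0$, hence $P\ge 0$ on $(0,\rho]$ in the cases $n=2,3$ (where $\gamma(\vepsi)=0$). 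I then bound $R-\rho$: on $(\rho,R)$ we have $\lambda_2^{n-1}P\le 0$ and, since there the right-hand side is $\le-\tfrac12 r^nf_0<0$, the inequality $\vepsi\bigl(P_{rr}-\tfrac2rP_r\bigr)\le-c_1<0$ holds; integrating this twice, using $P(\rho)=0$ and the bound $|P(R)|\le C$ (from $u^\vepsi_r\le C_1$), controls $R-\rho$ from above in terms of $\vepsi$, giving the closeness asserted in the statement. Set $r^*:=\rho$ (and $r_j^*:=r^*$ in part~(ii)).

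\textbf{Conclusion.} For $n=2,3$, facts (a)--(b) hold throughout $(0,R)$, so by the previous paragraph $P\ge 0$ either on $(0,R]$ (case $P(R)\ge 0$) or on $(0,r^*]$ (case $P(R)<0$); as the right-hand side of the governing equation is not identically zero (impossible with $f\ge f_0>0$), the strong maximum principle \cite{Protter_Weinberger67} upgrades $P\ge 0$ to $P>0$ in the interior, i.e.\ $u^\vepsi$ is strictly convex on $(0,R)$, resp.\ on $(0,R-c_0\vepsi)$. For $n>3$ I first record $u^\vepsi_{rr}(0)=\tfrac1n\Del u^\vepsi(0)>0$: if it were $0$ then $u^\vepsi_r(s)=o(s)$ near $0$, and integrating \eqref{v_eqn_6c} from $0$ while using $L_f(s)\ge\tfrac{f_0}{n}s^n$, which near $0$ dominates $\tfrac1n(u^\vepsi_r(s))^n$, would make $\Del u^\vepsi<0$ near $0$, contradicting Theorem~\ref{convexity_thm1}(i). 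I then choose $s_j\searrow 0$ and $\vepsi_j\searrow 0$ so small that $s_j>\kappa\vepsi_j^{1/3}$ (so (a)--(b) hold on $[s_j,R]$ when $\vepsi<\vepsi_j$) and that the positivity of $u^\vepsi_{rr}$ at the origin still reaches $s_j$---established by a barrier/ODE analysis of the inner region $(0,s_j)$ based on the first displayed identity, in which near $r=0$ the term $(n-1)(n-3)r^{-2}(\mu-\lambda_2)$ must be tracked carefully---so that $u^\vepsi_{rr}(s_j)\ge 0$ for $\vepsi<\vepsi_j$. With $P(s_j)\ge 0$, $P(r_j^*)\ge 0$, facts (a)--(b) on $[s_j,r_j^*]$, and the weak then strong maximum principles, I conclude $P>0$ on $(s_j,r_j^*)$, i.e.\ $u^\vepsi$ is strictly convex there, for all $\vepsi<\vepsi_j$.

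\textbf{Expected main obstacle.} For $n=2,3$ essentially all the work is in locating $r^*$ and estimating $R-r^*$. For $n>3$ the crux is producing $u^\vepsi_{rr}(s_j)\ge 0$ for small $\vepsi$: near $r=0$ both the zeroth-order coefficient and the forcing in the ODE for $u^\vepsi_{rr}$ blow up like $(n-1)(n-3)\vepsi\,r^{-2}$ and $(n-1)(n-3)\vepsi\,u^\vepsi_r\,r^{-3}$, so the maximum principle cannot be propagated down to the origin and one must instead control the inner layer at $r=0$ uniformly in $\vepsi$---which is exactly why (ii) is stated only on the $\vepsi$-dependent intervals $(s_j,r_j^*)$.
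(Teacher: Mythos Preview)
Your approach is essentially the same as the paper's: derive a linear second-order ODE for a power-of-$r$ multiple of $u^\vepsi_{rr}$, note that for $n\le 3$ the forcing has the right sign everywhere while for $n>3$ it does only away from the origin, and then run the weak/strong maximum principle on an interval with nonnegative boundary data, estimating the right boundary layer by a double integration of the differential inequality. The paper chooses the weight $\eta^\vepsi:=r^{n-1}u^\vepsi_{rr}$ rather than your $P:=r^n u^\vepsi_{rr}$; this kills the first-order term and gives the cleaner form
\[
-\vepsi\,\eta^\vepsi_{rr}+\Bigl[\frac{2\vepsi}{r^2}+\frac{(u^\vepsi_r)^{n-1}}{r^{n-1}}\Bigr]\eta^\vepsi
= r^{n-1}f+\vepsi(3-n)r^{n-3}v^\vepsi,
\]
so the boundary-layer estimate comes from simply integrating $-\vepsi\eta^\vepsi_{rr}\ge f_0 r^{n-1}$ twice, with no $P_r$ term to carry along. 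For $n>3$ the paper substitutes $v^\vepsi=u^\vepsi_{rr}+\tfrac{n-1}{r}u^\vepsi_r$ back in to rewrite the right-hand side as $r^{n-4}\bigl[r^3f+\vepsi(3-n)(n-1)u^\vepsi_r\bigr]$, which is exactly your sign observation~(b) in different clothing.

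The one place you make extra work for yourself is the left endpoint when $n>3$. You argue $u^\vepsi_{rr}(0)=\tfrac1n\Delta u^\vepsi(0)>0$ and then try to propagate that positivity out to a fixed $s_j$ uniformly in $\vepsi$ via an unspecified ``barrier/ODE analysis of the inner region''---and you correctly flag this as the main obstacle. The paper sidesteps this completely with a one-line observation: if $u^\vepsi_{rr}<0$ on an entire right neighborhood $(0,\delta]$ of the origin, then $u^\vepsi_r$ is strictly decreasing there with $u^\vepsi_r(0)=0$, forcing $u^\vepsi_r<0$ on $(0,\delta]$, which contradicts $u^\vepsi_r\ge 0$. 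Hence in every neighborhood of $0$ there is a point $s$ with $u^\vepsi_{rr}(s)\ge 0$, which supplies the left boundary datum for the maximum-principle argument without any barrier construction. Using this, your incomplete inner-layer step becomes unnecessary.
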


\begin{proof}
We divide the proof into three steps.

\smallskip
{\em Step 1}:
Let $w^\vepsi:=r^{n-1} u^\vepsi_r$ and 
$v^\vepsi:=\Del u^\vepsi=u^\vepsi_{rr} +\frac{n-1}{r} u^\vepsi_r=w^\vepsi_r$ be 
same as before, and define $\eta^\vepsi:=r^{n-1} u^\vepsi_{rr}$. On 
noting that 
$r^{n-1} v^\vepsi_r =\bigl(r^{n-1}u^\vepsi_{rr}\bigr)_r
-(n-1) r^{n-3} u^\vepsi_r
=\eta^\vepsi_r + \frac{1}{r} \eta^\vepsi -r^{n-2} v^\vepsi,$
the identity \eqref{v_eqn_0} can be rewritten as
\begin{align}\label{v_eqn_16}
-\vepsi \eta^\vepsi_{rr} 
+\Bigl[ \frac{2\vepsi}{r^2} +\frac{(u^\vepsi_r)^{n-1}}{r^{n-1}} \Bigr]\eta^\vepsi 
=r^{n-1}f + \vepsi(3-n)r^{n-3} v^\vepsi \qquad\mbox{in } (0,R).
\end{align}
So $\eta^\vepsi$ satisfies a linear uniformly elliptic equation. 

Clearly, $\eta^\vepsi(0)=0$. We claim that there exists (at least one)
$r_1\in (0,R]$ such that $\eta^\vepsi(r_1)\geq 0$. If not, then 
$\eta^\vepsi <0$ in $(0,R]$, and  $u^\vepsi_{rr}$ satisfies the same inequality. 
This implies  that $u^\vepsi_r$ is monotone decreasing in $(0,R]$. 
Since $u^\vepsi_r(0)=0$,  we have $u^\vepsi_r<0$ in $(0,R]$. But this contradicts with the fact 
that $u^\vepsi_r\geq 0$ in $(0,R]$. Therefore, the claim must be true.

Due to the factor $(3-n)$ in the second term on the right-hand side
of \eqref{v_eqn_16}, the situations for the cases $n\leq 3$ and 
$n>3$ are different, and need to be handled slightly different.

\smallskip
{\em Step 2: The case $n=2,3$}. Since $v^\vepsi\geq 0$, we have
\begin{equation}\label{v_eqn_17}
-\vepsi \eta^\vepsi_{rr} 
+\Bigl[ \frac{2\vepsi}{r^2} +\frac{(u^\vepsi_r)^{n-1}}{r^{n-1}} \Bigr]\eta^\vepsi 
\geq 0 \qquad\mbox{in } (0,R).
\end{equation}
Therefore, $\eta^\vepsi$ is a supersolution to a linear uniformly
elliptic differential operator.  By the weak maximum
principle (cf. \cite[page 329]{Evans98}) we have
$\min_{[0,r_1]} \eta^\vepsi (r) 
\geq \min\bigl\{0, \eta^\vepsi(0), \eta^\vepsi(r_1)\bigr\}
=\min\bigl\{0,\eta^\vepsi(r_1)\bigr\}=0.$

Let $r_*=\max\{r_1\in (0,R]; \, \eta^\vepsi(r_1)\geq 0\}$. By the above 
argument and the definition of $r_*$ we have $\eta^\vepsi\geq 0$ 
in $[0,r_*]$, $\eta^\vepsi(r_*)=0$ if $r_*\neq R$, 
and $\eta^\vepsi< 0$ in $(r_*, R]$. If $r_*=R$, then $\eta^\vepsi\geq 0$ 
in $[0,R]$. An application of the strong maximum principle to 
conclude that $\eta^\vepsi> 0$ in $(0,R)$.  
Hence, $u^\vepsi_{rr}> 0$ in $(0,R)$. Thus, $u^\vepsi$ is strictly convex 
in $(0,R)$. So the first part of the theorem's assertion is proved.

On the other hand, if $r_*< R$, we only know that $u^\vepsi$ is strictly 
convex in $(0,r_*)$. We now prove that $R-r_*=O(\vepsi)$. 
%
By \eqref{v_eqn_16} and the above setup we have
\begin{equation*} 
-\vepsi \eta^\vepsi_{rr} 
\geq r^{n-1}f \geq f_0 r^{n-1} \qquad\mbox{in } (r_*,R).
\end{equation*}
Integrating the above inequality over $(r_*, r)$ for $r\leq R$
and noting that $\eta^\vepsi_r(r_*)\leq 0$ we get
$-\vepsi \eta^\vepsi_r \geq \frac{f_0}{n} (r^n-r_*^n)$ in $(r_*,R).$
Integrating again over $(r_*,R)$ and using the fact that $\eta^\vepsi(r_*)=0$ and
the  algebraic inequality
$\frac{1}{n+1} \frac{R^{n+1}-r_*^{n+1}}{R-r_*}- r_*^n \geq \frac{1}{n+1} R^n$,
we arrive at
\[
-\vepsi R^{n-1} u_{rr}(R)=-\vepsi \eta^\vepsi(R) 
\geq \frac{f_0 R^n(R-r_*)}{n(n+1)}.
\]
It follows from \eqref{v_eqn_4} that
\begin{align*}
R-r_* \leq \frac{\vepsi n(n+1) |u_{rr}(R)|}{R f_0} 
&\leq \frac{\vepsi n(n+1)}{R^2 f_0} \Bigl[ \vepsi R 
+(n-1)\bigl(nL_f(R)\bigr)^{\frac{1}{n}} \Bigr] =:c_0\vepsi. 
\end{align*}
Thus, 
$R-r_*=O(\vepsi),$
and $u^\vepsi$ is strictly convex in $(0,R-c_0\vepsi)$.

\smallskip
{\em Step 3: The case $n>3$:} First, By the argument used in {\em Step 1}, it is
easy to show that $\eta^\vepsi$ can not be strictly negative in the whole of 
any neighborhood of $r=0$. Thus, there exists a monotone decreasing sequence
$\{s_j\}_{j\geq 0} \subset (0,R)$ such that $s_j\searrow 0^+$ as $j\to \infty$
and $\eta^\vepsi(s_j)\geq 0$.

Second, we note that
\begin{align*}
\eps(3-n)r^{n-3}v^\eps 
&= \eps(3-n)r^{n-3}\left(\ue_{rr}+\frac{n-1}r \ue_r\right)
=\eps(3-n)\left(\frac{\eta^\eps}{r^2}+(n-1)r^{n-4}\ue_r\right)
\end{align*}
Using this identity in \eqref{v_eqn_16}, we have
\begin{align*}
-\vepsi \eta^\vepsi_{rr} 
+\Bigl[ \frac{(n-1)\vepsi}{r^2} +\frac{(u^\vepsi_r)^{n-1}}{r^{n-1}} \Bigr]\eta^\vepsi 
=r^{n-4} [r^3 f + \vepsi(3-n)(n-1)\ue_r] \quad\mbox{in } (0,R).
\end{align*}
By (ii) of Theorem \ref{fine_estimates} we know that $u^\vepsi_r$ is uniformly
bounded in $[0,R]$.  Then for each $s_j$ there exists an $\vepsi_j>0$ (without
loss of the generality, choose $\vepsi_j< \vepsi_{j-1}$) 
such that for $\vepsi\in (0, \vepsi_j)$, there holds
$[r^3 f + \vepsi(3-n)(n-1)\ue_r] \ge 0$ in  $(s_j,R).$
Hence, $\eta^\vepsi$ is a supersolution to a linear uniformly elliptic
operator on $(s_j,R)$ for $\vepsi<\vepsi_j$. 

Third, for each fixed $j\geq 1$, let 
$r_j^*=\max\{r\in (s_j,R]; \, \eta^\vepsi(r)\geq 0\}$. 
Trivially, by the construction, $r_j^*\geq s_{j-1} > s_j$. 
By the weak maximum principle (cf. \cite[page 329]{Evans98}) we have
\begin{equation*}
\min_{[s_j,r_j^*]} \eta^\vepsi (r) 
\geq \min\bigl\{0, \eta^\vepsi(s_j), \eta^\vepsi(r_j^*)\bigr\}\geq 0.
\end{equation*}

Finally, repeating the argument of {\em Step 2:}, we conclude that $\ue$ is either
strictly convex in $(s_j,R)$ or in $(s_j,r_j^*)$ with $R-r_j^*=O(\vepsi)$ 
for $\vepsi\in (0,\vepsi_j)$. The proof is now complete.
\end{proof}


\section{Convergence of vanishing moment approximations} \label{section-5}
The goal of this section is to show that the solution $u^\vepsi$ 
of problem \eqref{moment_radial} converges
to the convex solution $u$ of problem \eqref{MA_radial} 
We present two different proofs for the convergence.  The 
first proof is based on the variational formulations of both problems.
The second proof, which can be extended to more general non-radially 
symmetric case, is done in the viscosity solution setting
\cite{Crandall_Ishii_Lions92}. Both proofs mainly rely on two key ingredients. 
The first is the solution estimates obtained in Theorem \ref{fine_estimates}, 
and the second is the uniqueness of solutions to problem 
\eqref{MA_radial}.

\begin{thm}\label{convergence_thm1}
Suppose $f\in C^0((0,R))$ and there exists a
positive constant $f_0$ such that $f\geq f_0$ in $[0,R]$.
Let $u$ denote the convex (classical) solution to problem \eqref{MA_radial}
and $u^\vepsi$ be the 
monotone increasing classical solution to 
problem \eqref{moment_radial}.  Then

\begin{enumerate}
\item[{\rm (i)}] $u^0=\lim_{\vepsi\to 0^+} u^\vepsi$ exists pointwise
and $u^\vepsi$ converges to $u^0$ uniformly in every compact subset 
of $(0,R)$ as $\vepsi\to 0^+$. Moreover, $u^0$ is strictly convex in 
every compact subset. Hence, it is convex in $[0,R]$.

\item[{\rm (ii)}] $u^\vepsi_r$ converges to $u^0_r$ weakly $*$  
in $L^\infty((0,R))$ as $\vepsi\to 0^+$. 

\item[{\rm (iii)}] $u^0\equiv u$.
\end{enumerate}

\end{thm}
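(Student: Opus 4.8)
The plan is to argue by compactness together with the uniqueness of the convex solution of \eqref{MA_radial}. By Theorem \ref{fine_estimates}(ii) the family $\{u^\vepsi\}$ is bounded in $C^1([0,R])$, hence equicontinuous, so from any sequence $\vepsi_k\to 0^+$ I can extract via Arzel\`a--Ascoli a subsequence along which $u^{\vepsi_k}\to u^0$ uniformly on $[0,R]$ for some $u^0\in C^{0,1}([0,R])$, and (the limit being forced by the uniform convergence) $u^{\vepsi_k}_r\rightharpoonup u^0_r$ weakly-$*$ in $L^\infty((0,R))$. Theorem \ref{convexity_thm} guarantees that every compact $K\subset(0,R)$ is contained in the convexity interval of $u^\vepsi$ once $\vepsi$ is small enough, so $u^0$ is convex on each such $K$, hence on $(0,R)$, hence --- by continuity up to $r=0,R$ --- on $[0,R]$. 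A standard property of convex functions then yields $u^{\vepsi_k}_r\to u^0_r$ at every point where $u^0$ is differentiable, i.e.\ a.e.\ on $(0,R)$, which in particular confirms the weak-$*$ limit above.

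The core step is to pass to the limit in the reduced first-order equation \eqref{v_eqn_6c} (equivalently \eqref{moment1b_radial}),
\[
-\vepsi r^{n-1} v^\vepsi_r + \tfrac1n (u^\vepsi_r)^n = L_f \quad\text{in }(0,R),\qquad v^\vepsi:=\Del u^\vepsi .
\]
Testing against an arbitrary $\chi\in C^\infty_0((0,R))$ and letting $\vepsi_k\to 0^+$: the nonlinear term passes to the limit, $\int_0^R(u^{\vepsi_k}_r)^n\chi\,dr\to\int_0^R(u^0_r)^n\chi\,dr$, by the a.e.\ convergence just obtained, the uniform bound $|u^\vepsi_r|\le C_1$ and dominated convergence; the term $\int_0^R L_f\chi\,dr$ is $\vepsi$-independent; and the singular term vanishes --- writing $r^{n-1}v^\vepsi_r = w^\vepsi_{rr}-\tfrac{n-1}{r}w^\vepsi_r$ (recall $w^\vepsi_r=r^{n-1}v^\vepsi$, and $w^\vepsi\in C^3((0,R))$ by Corollary \ref{existence_cor}), integrating by parts on the support of $\chi$, and using the energy estimate $\|w^\vepsi_r\|_{L^2((0,R))}\le C\vepsi^{-1/2}$ from Theorem \ref{fine_estimates}(vi), one gets $\vepsi\int_0^R r^{n-1}v^\vepsi_r\chi\,dr=O(\sqrt{\vepsi})\to 0$. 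Hence $\tfrac1n(u^0_r)^n=L_f$ a.e.\ on $(0,R)$; since $u^\vepsi_r\ge 0$ we have $u^0_r=(nL_f)^{1/n}\ge 0$ a.e., and as the right-hand side is continuous on $(0,R)$ this forces $u^0\in C^1((0,R))$. Combining with $u^0(R)=\lim_k u^{\vepsi_k}(R)=g(R)$ gives $u^0(r)=g(R)-\int_r^R(nL_f(s))^{1/n}\,ds$, which is exactly the convex solution $u$ of \eqref{MA_radial} furnished by Lemma \ref{lem3.1}; moreover $u^0_{rr}=(nL_f)^{\frac1n-1}r^{n-1}f>0$ on $(0,R)$, so $u^0$ is strictly convex on every compact subset.

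Because the limit $u=u^0$ is independent of the chosen sequence and subsequence, a routine contradiction argument upgrades subsequential convergence to the assertion that $u^0:=\lim_{\vepsi\to 0^+}u^\vepsi$ exists pointwise --- indeed uniformly on $[0,R]$, hence uniformly on every compact subset of $(0,R)$ --- which, together with the previous paragraph, proves (i) and (iii). Part (ii) then follows immediately: the uniform convergence $u^\vepsi\to u^0$ forces $u^\vepsi_r\to u^0_r$ in the sense of distributions, and since $\{u^\vepsi_r\}$ is bounded in $L^\infty((0,R))$ this is in fact weak-$*$ convergence in $L^\infty$ (equivalently, the a.e.\ convergence plus the uniform bound give $u^\vepsi_r\to u^0_r$ in $L^p$ for every $p<\infty$).

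The main obstacle is the passage to the limit in the nonlinear term $(u^\vepsi_r)^n$: weak-$*$ convergence of $u^\vepsi_r$ alone is insufficient, and there is no uniform $C^{1,\alpha}$ (or even $C^1$) bound --- the pointwise estimate Theorem \ref{fine_estimates}(iii) degenerates like $\vepsi^{-1}$ --- so one genuinely needs the convexity of $u^\vepsi$ from Theorem \ref{convexity_thm} to upgrade weak convergence to a.e.\ convergence of the derivatives. A secondary point needing care is showing that the singular term $\vepsi r^{n-1}v^\vepsi_r$ is negligible in the distributional limit, since the available pointwise bounds on $v^\vepsi$ and $v^\vepsi_r$ (Theorem \ref{fine_estimates}(iv)--(v)) are only $O(1)$ there; this is precisely what forces the integration by parts and the use of the $L^2$ energy estimate (vi). An alternative route, avoiding the variational structure and extending to the non-radial case, would replace the first two paragraphs by a viscosity-solution stability argument based on Theorem \ref{fine_estimates} and the uniqueness of convex viscosity solutions of \eqref{MA_radial}.
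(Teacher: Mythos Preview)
Your proof is correct and follows essentially the same approach as the paper: both use the $C^1$ bound from Theorem~\ref{fine_estimates}(ii) for Arzel\`a--Ascoli compactness, Theorem~\ref{convexity_thm} for convexity of the limit, estimate~(vi) of Theorem~\ref{fine_estimates} to kill the singular $\vepsi$-term after an integration by parts, and uniqueness to identify the limit. The only notable variation is that you test the first-order integrated equation \eqref{v_eqn_6c} and identify $u^0$ directly via the explicit formula of Lemma~\ref{lem3.1}, whereas the paper tests the divergence-form equation \eqref{v_eqn_0}, obtains the weak formulation \eqref{v_eqn_21}, and invokes uniqueness of convex weak solutions; your route is slightly more direct in the radial setting, and your explicit justification (via convexity) of the a.e.\ convergence of $u^\vepsi_r$ needed for dominated convergence in the nonlinear term is a point the paper leaves implicit.
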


\begin{proof}
It follows from (ii) of Theorem \ref{fine_estimates} that
$\|u^\vepsi\|_{C^1([0,R])}$ is uniformly bounded in $\vepsi$, 
then $\{u^\vepsi\}_{\vepsi\geq 0}$ is uniformly equicontinuous. By
Arzela-Ascoli compactness theorem (cf. \cite[page 635]{Evans98}) 
we conclude that there exists a subsequence of $\{u^\vepsi\}_{\vepsi\geq 0}$ 
(still denoted by the same notation) and $u^0\in C^1([0,R])$ such that
\begin{alignat*}{2}
u^\vepsi &\longrightarrow u^0
&&\qquad\mbox{uniformly in every compact set $E\subset (0,R)$ as } \vepsi\to 0^+,\\
u^\vepsi_r &\longrightarrow u^0_r
&&\qquad\mbox{weakly $*$ in $L^\infty((0,R))$ as } \vepsi\to 0^+,
\end{alignat*}
and $u^\vepsi(R)=g(R)$ implies that
\[
u^0(R)=g(R).
\]

In addition, by Theorem \ref{convexity_thm} we have that
for every compact subset $E\subset (0,R)$ there exists $\vepsi_0>0$ such that
$E\subset (0, R-c_0\vepsi)$ and $u^\vepsi$ is strictly convex
in $(0, R-c_0\vepsi)$ for $\vepsi<\vepsi_0$. 
It follows from a well-known property of convex functions
(cf. \cite{HUT01}) that $u^0$ must be strictly convex in $E$ and
$u^0\in C^{1,1}_{\mbox{\tiny loc}}((0,R))$.

Testing equation \eqref{v_eqn_0} with an arbitrary function
$\chi\in C^2_0((0,R))$ yields
\begin{align}\label{v_eqn_20}
\vepsi\int_0^R r^{n-1} v^\vepsi_r(r) \chi_r(r) \,dr  
-\frac{1}{n} \int_0^R \bigl(u^\vepsi_r(r)\bigr)^n \chi_r(r)\,dr
=\int_0^R r^{n-1} f(r) \chi(r)\, dr,
\end{align}
where as before $v^\vepsi=\Del u^\vepsi=u^\vepsi_{rr} +\frac{n-1}{r} u^\vepsi_r$.
By Schwartz inequality and (vi) of Theorem \ref{fine_estimates} we have
\begin{align*}
\vepsi\int_0^R r^{n-1} v^\vepsi_r(r) \chi_r(r) \,dr 
&=-\vepsi \int_0^R r^{n-1} v^\vepsi(r) \Bigl[ \chi_{rr}(r) 
+ \frac{n-1}{r} \chi_r(r)\Bigr]\, dr \\
&\leq \vepsi\Bigl(\int_0^R r^{2(n-1)}|v^\vepsi(r)|^2\,dr\Bigr)^{\frac12}
\Bigl(\int_0^R |\Del \chi(r)|^2 \,dr\Bigr)^{\frac12} \\
&\leq \sqrt{\vepsi\, C_5} \Bigl(\int_0^R |\Del \chi(r)|^2 \,dr\Bigr)^{\frac12}
\to 0 \quad\mbox{as } \vepsi\to 0^+.
\end{align*}

Setting $\vepsi \to 0^+$ in \eqref{v_eqn_20} and using the Lebesgue 
Dominated Convergence Theorem yields
\begin{align}\label{v_eqn_21}
-\frac{1}{n} \int_0^R \bigl(u^0_r(r)\bigr)^n \chi_r(r)\,dr
=\int_0^R r^{n-1} f(r) \chi(r)\, dr \qquad\forall \chi\in C^1_0((0,R)).
\end{align}
It also follows from a standard test function argument that
$u^0_r(0)=0.$
This means that $u^0\in C^1([0,R])\cap C^{1,1}_{\mbox{\tiny loc}}((0,R))$ 
is a convex weak solution to problem \eqref{MA_radial}. 
By the uniqueness of convex solutions of problem \eqref{MA_radial}, 
there must hold $u^0\equiv u$.

Finally, since we have proved that every convergent subsequence
of $\{u^\vepsi\}_{\vepsi\geq 0}$ converges to the unique convex 
classical solution $u$ of  problem \eqref{MA_radial}, the whole sequence 
$\{u^\vepsi\}_{\vepsi\geq 0}$ must converge to $u$.
The proof is complete.
\end{proof}

Next, we state and prove a different version of Theorem \ref{convergence_thm1}.
The difference is that we now only assume problem \eqref{MA_radial}
has a unique 
strictly convex viscosity solution and so the proof must be
adapted to the viscosity solution framework. 

\begin{thm}\label{convergence_thm2}
Suppose $f\in C^0((0,R))$ and there exists a
positive constant $f_0$ such that $f\geq f_0$ on $[0,R]$.
Let $u$ denote the strictly convex viscosity solution to problem
\eqref{MA_radial}
and $u^\vepsi$ be the
monotone increasing classical solution to
problem \eqref{moment_radial}.
Then
the statements of Theorem {\rm \ref{convergence_thm1}} still hold.
\end{thm}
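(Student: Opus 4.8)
The plan is to follow the proof of Theorem \ref{convergence_thm1} verbatim through its compactness step, and only afterwards replace its concluding appeal to uniqueness of convex solutions of \eqref{MA_radial} by one that uses solely the assumed uniqueness of the \emph{strictly convex viscosity} solution. Thus I would first use the $\vepsi$-uniform bound in (ii) of Theorem \ref{fine_estimates} and Arzela-Ascoli to extract a subsequence $u^\vepsi\to u^0$ in $C([0,R])$ with $u^0\in C^1([0,R])$, $u^\vepsi_r\to u^0_r$ weakly-$*$ in $L^\infty((0,R))$, $u^0(R)=g(R)$ and $u^0_r(0)=0$; and, using Theorems \ref{convexity_thm1} and \ref{convexity_thm} together with the closure properties of convex functions, that $u^0$ is convex on $[0,R]$ and strictly convex and $C^{1,1}$ on every compact subset of $(0,R)$. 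All of this is word-for-word Theorem \ref{convergence_thm1}.

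The only genuinely new point is to show that $u^0$ is a viscosity solution of \eqref{MA_radial}, i.e.\ of $\det(D^2u)=f$ in $B_R(0)$ with $u=g$ on $\partial B_R(0)$. For the radial problem this is quickest done exactly as in Theorem \ref{convergence_thm1}: passing to the limit in the weak identity \eqref{v_eqn_20}, the fourth-order term drops by estimate (vi) of Theorem \ref{fine_estimates}, leaving $\frac{1}{n}(u^0_r)^n=L_f$ on $(0,R)$, so $u^0$ is given by the convex formula \eqref{solution}. Since $f\ge f_0>0$ forces $L_f(r)>0$ for every $r\in(0,R]$, the regularity discussion following Lemma \ref{lem3.1} (and a direct computation at $r=0$, which gives $D^2u^0(0)=f(0)^{1/n}I$) shows that $u^0$ is in fact a \emph{classical} solution with $D^2u^0>0$ throughout $B_R(0)$; being a strictly convex classical solution, it is a fortiori a strictly convex viscosity solution.

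It is worth also sketching the standard stability verification in the viscosity-solution framework (cf.\ \cite{Crandall_Ishii_Lions92}) that $u^0$ is a viscosity sub- and supersolution, since that is the argument one would use in the non-radial setting. Given $\varphi\in C^2$ touching $u^0$ from above (resp.\ below) at an interior point, locally uniform convergence produces contact points $x_\vepsi$ for $u^\vepsi$; the convexity of $u^\vepsi$ on regions exhausting $(0,R)$ (Theorems \ref{convexity_thm1}--\ref{convexity_thm}) together with the contact conditions gives $0\le D^2u^\vepsi(x_\vepsi)\le D^2\varphi(x_\vepsi)$ (resp.\ $D^2u^\vepsi(x_\vepsi)\ge D^2\varphi(x_\vepsi)$), whence monotonicity of $\det$ on the positive semidefinite cone transfers the ordering to the determinants, and one concludes using the equation in the form $\det(D^2u^\vepsi)=f+\vepsi\Del^2u^\vepsi$. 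The main obstacle is precisely this fourth-order term $\vepsi\Del^2u^\vepsi$: unlike the second-order term in the classical vanishing-viscosity method it carries no sign at a contact point, so one must show that it is asymptotically negligible near $x_\vepsi$; in the radial case this follows from the $\vepsi$-explicit estimates of Theorem \ref{fine_estimates} and the first integral \eqref{v_eqn_6c}, which --- once $\lim_{\vepsi\to0^+}u^\vepsi_r=(nL_f)^{1/n}$ is known --- forces $\vepsi r^{n-1}v^\vepsi_r\to0$ locally uniformly.

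Finally, $u^0$ being a strictly convex viscosity solution of \eqref{MA_radial}, the assumed uniqueness yields $u^0\equiv u$; and since every convergent subsequence of $\{u^\vepsi\}_{\vepsi>0}$ then has the same limit $u$, the whole family converges, and assertions (i)--(iii) follow exactly as in Theorem \ref{convergence_thm1}.
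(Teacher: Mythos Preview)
Your main approach is correct but takes a more elementary route than the paper. Both proofs share the compactness step from Theorem~\ref{convergence_thm1} verbatim. You then simply re-use the weak identity \eqref{v_eqn_21} from that proof, observe that under $f\ge f_0>0$ the limit $u^0$ coincides with the explicit convex formula \eqref{solution} and is therefore a strictly convex \emph{classical} solution on $B_R(0)$, and invoke the trivial implication ``classical $\Rightarrow$ viscosity'' before appealing to uniqueness. This is entirely valid in the radial setting and is shorter; its cost---which you correctly flag---is that it leans on the one-dimensional first integral and the explicit formula, so it does not export to the non-radial problem. The paper, by contrast, writes out a genuine viscosity-solution stability argument precisely because that argument \emph{does} generalize: given a strictly convex $C^2$ test function $\phi$ with $u^0-\phi$ having a local extremum at $r_0$, it perturbs the contact point to $r_\vepsi$ for $u^\vepsi$, tests the integrated equation \eqref{v_eqn_0} against a nonnegative bump $\chi$ supported in $(r_0-\rho,r_0+\rho)$, kills the fourth-order term via estimate (vi) of Theorem~\ref{fine_estimates}, and finally shrinks $\rho\to0$ using the Lebesgue--Besicovitch differentiation theorem to recover the pointwise viscosity inequality.

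Your sketched second argument is close in spirit to the paper's but differs in one important respect: the paper does \emph{not} attempt to control $\vepsi\,\Delta^2 u^\vepsi$ at the single contact point $x_\vepsi$, because no such pointwise bound is available from Theorem~\ref{fine_estimates}. Instead it averages the equation against $\chi$ and uses only the $L^2$-type estimate (vi). Your proposed way around this---deduce $\vepsi r^{n-1}v^\vepsi_r\to 0$ from \eqref{v_eqn_6c} once $u^\vepsi_r\to(nL_f)^{1/n}$ is known---is circular as stated, since that limit of $u^\vepsi_r$ is essentially the conclusion you are trying to reach. The paper's averaging trick is the clean way to handle the fourth-order remainder in the viscosity framework.
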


\begin{proof}
Except the step of proving the variational formulation \eqref{v_eqn_21}, 
all other parts of the proof of Theorem \ref{convergence_thm1} are
still valid.  So we only need to show that $u^0$ is 
a viscosity solution of problem \eqref{MA_radial},
which is verified below by the definition of viscosity solutions. 

Let $\phi\in C^2([0,R])$ be strictly convex.
Suppose that $u^0-\phi$ 
has a local maximum at a point $r_0\in (0,R)$, that is, 
there exists a (small) number $\delta_0>0$ such that
$(r_0-\delta_0, r_0+\delta_0)\subset\subset (0,R)$ and 
\[
u^0(r)-\phi(r) \leq u^0(r_0)-\phi(r_0)
\qquad\forall r\in (r_0-\delta_0, r_0+\delta_0).
\]

Since $u^\vepsi$ (which still denotes a subsequence) converges 
to $u^0$ uniformly in $[r_0-\delta_0, r_0+\delta_0]$,
then for sufficiently small $\vepsi>0$, there exists
$r_\vepsi\in (0,R)$ such that $r_\vepsi\to r_0$ 
as $\vepsi\to 0^+$ and $u^\vepsi-\phi$ has a local 
maximum at $r_\vepsi$ (see \cite[Chapter 10]{Evans98}
for a proof of the claim). By elementary calculus, we have
$u^\vepsi_r(r_\vepsi)=\phi_r(r_\vepsi)$ and
$u^\vepsi_{rr}(r_\vepsi)\leq \phi_{rr}(r_\vepsi).$
Because both $u^\vepsi$ and $\phi$ are strictly convex,
there exists a (small) constant $\rho_0>0$ such that
for sufficiently small $\vepsi>0$ 
\begin{align*}
u^\vepsi_{rr}(r)\leq \phi_{rr}(r) \qquad\forall r\in (r_0-\rho,r_0+\rho),\,\,
\rho<\rho_0,
\end{align*}
which together with an application of Taylor's formula implies that
\begin{align*}
u^\vepsi_r(r)=\phi_r(r)+O(|r-r_\vepsi|)\qquad\forall r\in (r_0-\rho,r_0+\rho),\,\,
\rho<\rho_0.
\end{align*}

Let $\chi\in C^2_0((r_0-\rho,r_0+\rho))$ with $\chi \geq 0$ and
$\chi(r_0)>0$.  Testing \eqref{v_eqn_0} with $\chi$ yields
\begin{align}\label{v_eqn_22}
&\frac{1}{2n\rho}\int_{r_0-\rho}^{r_0+\rho} 
\bigl((\phi_r(r))^n\bigr)_r \chi(r) \,dr  
=\frac{1}{2\rho}\int_{r_0-\rho}^{r_0+\rho} 
(\phi_r(r))^{n-1} \phi_{rr}(r) \chi(r) \,dr \\
&\qquad \geq \frac{1}{2\rho }\int_{r_0-\rho}^{r_0+\rho} 
\bigl[(u^\vepsi_r(r))^{n-1} + O(|r-r_\vepsi|^{n-1})\bigr] 
u^\vepsi_{rr}(r) \chi(r)\,dr \nonumber \\
&\qquad =\frac{1}{2n\rho}\int_{r_0-\rho}^{r_0+\rho} 
\Bigl[ \bigl((u^\vepsi_r(r))^n\bigr)_r +O(|r-r_\vepsi|^{n-1})u^\vepsi_{rr}(r)
\Bigr] \chi(r) \,dr \nonumber \\
&\qquad \geq \frac{1}{2\rho }\int_{r_0-\rho}^{r_0+\rho} 
r^{n-1} \bigl[f(r) \chi(r)+\vepsi v^\vepsi_r(r) \chi_r(r) \bigr]\,dr \nonumber 
-C_9\rho^{n-2}\int_{r_0-\rho}^{r_0+\rho} u^\vepsi_r(r)
\chi_r(r)\,dr \nonumber 
\end{align}
for some positive $\rho$-independent constant $C_9$. Here, we have
used the fact that $u^\vepsi_{rr}\geq 0, \chi\geq 0$ in 
$[r_0-\rho, r_0+\rho]$ to get the last inequality.

From (vi) of Theorem \ref{fine_estimates}, we have
\begin{align}\label{v_eqn_23}
\vepsi  \int_{r_0-\rho}^{r_0+\rho} r^{n-1} v^\vepsi_r(r) \chi_r(r) \,dr 
& = -\vepsi  \int_{r_0-\rho}^{r_0+\rho} r^{n-1} v^\vepsi(r)\Bigl[
\chi_{rr}(r) +\frac{n-1}{r} \chi_r(r) \Bigr]\,dr  \\
& \leq \vepsi \Bigl(\int_0^R r^{2(n-1)} |v^\vepsi(r)|^2\, dr\Bigr)^{\frac12}
\Bigl(\int_0^R |\Del \chi(r)|^2\, dr\Bigr)^{\frac12} \nonumber\\
& 
\leq \sqrt{\vepsi\, C_5} \Bigl(\int_0^R |\Del \chi(r)|^2\, dr\Bigr)^{\frac12}. 
\nonumber
\end{align}

Setting $\vepsi\to 0^+$ in \eqref{v_eqn_22} and using 
\eqref{v_eqn_23} we get
\begin{align}\label{v_eqn_24}
&\frac{1}{2\rho}\int_{r_0-\rho}^{r_0+\rho} 
(\phi_r(r))^{n-1} \phi_{rr}(r) \chi(r) \,dr  \\
&\qquad \geq \frac{1}{2\rho }\int_{r_0-\rho}^{r_0+\rho} r^{n-1}f(r)\chi(r)\,dr
-C_9\rho^{n-2}\int_{r_0-\rho}^{r_0+\rho} u^0_r(r)
\chi_r(r)\,dr.  \nonumber
\end{align}
Here, we have used the fact that $u^\vepsi_r$ converges to $u^0_r$ 
weakly $*$ in $L^\infty((0,R))$ to pass to the limit in the
last term on the right-hand side.

Finally, letting $\rho\to 0^+$ in \eqref{v_eqn_24} and 
using the Lebesgue-Besicovitch Differentiation Theorem
(cf. \cite{Evans98}) we have
$
(\phi_r(r_0))^{n-1} \phi_{rr}(r_0) \chi(r_0) 
\geq  r_0^{n-1} f(r_0) \chi(r_0).$
Hence,
\[
\Bigl[\frac{\phi_r(r_0)}{r_0}\Bigr]^{n-1} \phi_{rr}(r_0) \geq f(r_0),
\]
so $u^0$ is a viscosity subsolution to equation \eqref{MA1_radial}.

Similarly, we can show that if $u^0-\phi$ assumes a local minimum
at $r_0\in (0,R)$ for a strictly convex function $\phi\in C^2_0((0,R))$,
there holds
\[
\Bigl[\frac{\phi_r(r_0)}{r_0}\Bigr]^{n-1} \phi_{rr}(r_0) \leq f(r_0).
\]
Therefore, $u^0$ is also a viscosity supersolution to equation \eqref{MA1_radial}.
Thus, it is a viscosity solution. The proof is complete.
\end{proof}

\section{Rates of convergence} \label{section-6}
In this section,  we derive rates of convergence for 
$u^\vepsi$ in various norms. Here we consider two cases,
namely, the $n$-dimensional radially symmetric case and 
the general $n$-dimensional case, under different assumptions. 
In both cases, the linearization of the Monge-Amp\`ere operator 
are explicitly exploited, and it plays a key role in our proofs. 

\begin{thm}\label{convergence_rate_thm1}
Let $u$ denote the strictly convex classical solution to problem
\eqref{MA_radial}
and $u^\vepsi$ be the
monotone increasing classical solution to
problem 
\eqref{moment_radial}.
Then
there holds the following estimates:
\begin{align}\label{rate_1}
\Bigl(\int_0^R \theta^\vepsi(r) |u_r(r)-u^\vepsi_r(r)|^2\, dr\Bigr)^{\frac12} 
&\leq \vepsi^{\frac34}\, C_{10},\\
\Bigl(\int_0^R r^{n-1}|\Del u(r)-\Del u^\vepsi(r)|^2\, dr\Bigr)^{\frac12} 
&\leq \vepsi^{\frac14}\,C_{11}, \label{rate_2} 
\end{align}
where $C_j=C_j(\|r^{n-1} \Del u_r\|_{L^2})$ for $j=10,11$ are two 
positive $\vepsi$-independent constants, and 
\begin{equation}\label{rate_6}
\theta^\vepsi(r): =\frac{(u_r)^n -(u^\vepsi_r)^n}{u-u^\vepsi}
=\sum_{j=0}^{n-1} (u_r(r))^j (u^\vepsi_r(r))^{n-1-j} > 0
\quad\mbox{\rm in } (0,R].
\end{equation}

\end{thm}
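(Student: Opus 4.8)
The plan is to derive an energy identity for the error $e^\vepsi := u - u^\vepsi$ by subtracting the two governing equations. Both $u$ and $u^\vepsi$ satisfy, after the integration performed in Section~\ref{section-2}, an ODE of the form
\begin{align*}
-\vepsi r^{n-1} v^\vepsi_r + \tfrac1n (u^\vepsi_r)^n = L_f,
\qquad
\tfrac1n (u_r)^n = L_f,
\end{align*}
where the first is \eqref{v_eqn_6c} and the second is obtained by integrating \eqref{MA1_radial}. Subtracting gives
\begin{align*}
-\vepsi r^{n-1} v^\vepsi_r + \tfrac1n\bigl((u^\vepsi_r)^n - (u_r)^n\bigr) = 0
\qquad\text{in } (0,R).
\end{align*}
The key algebraic device is the factorization $(u_r)^n - (u^\vepsi_r)^n = \theta^\vepsi\,(u_r - u^\vepsi_r) = \theta^\vepsi\, e^\vepsi_r$ from \eqref{rate_6}, which linearizes the nonlinear term; here $\theta^\vepsi > 0$ on $(0,R]$ because $u_r, u^\vepsi_r \ge 0$ and $u_r > 0$ on $(0,R]$ by strict convexity of $u$ (and $u^\vepsi_r>0$ there as well).

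First I would multiply the subtracted equation by $e^\vepsi_r = u_r - u^\vepsi_r$ and integrate over $(0,R)$. The nonlinear term immediately produces $\frac1n\int_0^R \theta^\vepsi |e^\vepsi_r|^2\,dr$, which is the left-hand side of \eqref{rate_1}. For the $\vepsi$-term, I would write $e^\vepsi_r = u_r - u^\vepsi_r$ and handle the two pieces separately: $-\vepsi\int_0^R r^{n-1} v^\vepsi_r\, u_r\,dr$ and $+\vepsi\int_0^R r^{n-1} v^\vepsi_r\, u^\vepsi_r\,dr$. For the second piece, note $r^{n-1} u^\vepsi_r = w^\vepsi$ and $v^\vepsi_r = (w^\vepsi_r/r^{n-1})_r \cdot$(something), so integrating by parts relates it to $\int_0^R |w^\vepsi_r|^2\, dr$-type quantities controlled by (vi) of Theorem~\ref{fine_estimates}, modulo boundary terms at $r=R$ where $v^\vepsi(R)=\vepsi$ and $w^\vepsi_r(R) = \vepsi R^{n-1}$ are explicit. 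For the first piece, an integration by parts moves the $r$-derivative onto $r^{n-1}u_r = $ (essentially $L_f'$-related), giving $\vepsi\int_0^R r^{n-1} v^\vepsi\, (\Delta u)\,dr$ plus a boundary term; then Cauchy--Schwarz against $\bigl(\int r^{2(n-1)}|v^\vepsi|^2\bigr)^{1/2} \le \sqrt{C_5/\vepsi}$ from (vi), paired with the fixed quantity $\|r^{n-1}\Delta u_r\|_{L^2}$ that appears in the constants $C_{10}, C_{11}$. Collecting terms yields
\begin{align*}
\int_0^R \theta^\vepsi |e^\vepsi_r|^2\, dr \le C\,\vepsi \cdot \sqrt{\tfrac{C_5}{\vepsi}} \cdot \|r^{n-1}\Delta u_r\|_{L^2} + (\text{lower order in }\vepsi) \le C\,\vepsi^{1/2}\cdot(\cdots),
\end{align*}
and here I should be more careful: to land the exponent $\frac32$ on $\vepsi$ (so that $\|{\cdot}\|\le \vepsi^{3/4}C_{10}$), the pairing must be $\vepsi \cdot \bigl(\vepsi\int r^{2(n-1)}|v^\vepsi|^2\bigr)^{1/2} = \vepsi\cdot\sqrt{C_5}$, i.e. one keeps a factor $\sqrt\vepsi$ inside the square root using the $\vepsi\int r^{2(n-1)}|v^\vepsi|^2 \le C_5$ form of (vi). That gives $\int_0^R\theta^\vepsi|e^\vepsi_r|^2\,dr \le C\,\vepsi^{3/2}$, hence \eqref{rate_1}.

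For \eqref{rate_2}, I would bound $\int_0^R r^{n-1}|\Delta e^\vepsi|^2\,dr = \int_0^R r^{n-1}\bigl|e^\vepsi_{rr} + \tfrac{n-1}{r}e^\vepsi_r\bigr|^2\,dr$. The cleanest route is to observe $r^{n-1}\Delta e^\vepsi = (r^{n-1}e^\vepsi_r)_r = (w - w^\vepsi)_r$ where $w := r^{n-1}u_r$, and then go back to the subtracted \emph{unintegrated} fourth-order-vs-second-order relation, or alternatively express $r^{n-1}\Delta u^\vepsi = w^\vepsi_r$ and $r^{n-1}\Delta u = w_r$ and estimate $\int_0^R \frac{1}{r^{n-1}}|w_r - w^\vepsi_r|^2\,dr$ — but since we want the weight $r^{n-1}$, not $r^{-(n-1)}$, the better identity is to return to $-\vepsi r^{n-1}v^\vepsi_r + \frac1n\theta^\vepsi e^\vepsi_r = 0$, solve for $r^{n-1}v^\vepsi_r$, and note $r^{n-1}\Delta e^\vepsi = r^{n-1}\Delta u - r^{n-1}v^\vepsi$; one controls $r^{n-1}\Delta u$ by the data and $r^{n-1}v^\vepsi$ via the already-established control on $e^\vepsi_r$ together with the pointwise relation $v^\vepsi_r = \frac{1}{\vepsi r^{n-1}}\cdot\frac1n\theta^\vepsi e^\vepsi_r$ integrated against itself. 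Concretely, $\vepsi\int_0^R r^{n-1}|v^\vepsi_r|^2\,dr$-type quantities combine with a Poincaré/Hardy inequality (using $e^\vepsi(R) = 0$ since $u(R) = u^\vepsi(R) = g(R)$, and $e^\vepsi_r(0)=0$) to produce $\int_0^R r^{n-1}|\Delta e^\vepsi|^2\,dr \le C(\vepsi^{-1/2}\cdot\|e^\vepsi_r\|_{\theta}^2 + \vepsi\cdot(\cdots)) \le C\vepsi^{1/2}$, giving the $\vepsi^{1/4}$ rate.

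\textbf{Main obstacle.} The delicate point is bookkeeping the powers of $\vepsi$: the a priori bounds in Theorem~\ref{fine_estimates} all carry factors $\vepsi^{-1}$ (e.g. (vi) reads $\le C_5/\vepsi$), so every Cauchy--Schwarz pairing must be arranged so that exactly the right number of explicit $\vepsi$ factors from the error equation cancel against them, and one must resist the temptation to be wasteful. In particular, establishing \eqref{rate_2} at the full $\vepsi^{1/4}$ rate requires chaining \eqref{rate_1} (an $H^1$-type bound on $e^\vepsi$, weighted by $\theta^\vepsi$) through the equation to reach an $H^2$-type ($\Delta$) bound \emph{without} losing a power; the hand-off from the weighted first-order control $\int\theta^\vepsi|e^\vepsi_r|^2$ — where $\theta^\vepsi$ degenerates like $r^{n-1}$ near $r=0$ — to the desired $\int r^{n-1}|\Delta e^\vepsi|^2$ is where a Hardy-type inequality and the boundary conditions $e^\vepsi(R)=0$, $e^\vepsi_r(0)=0$ must be used carefully, and getting a clean $\vepsi$-independent constant there (depending only on $\|r^{n-1}\Delta u_r\|_{L^2}$ as claimed) is the crux.
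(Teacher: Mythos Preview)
Your starting point is right and coincides with the paper's: subtracting the integrated equations and pairing with $e^\vepsi_r$ (equivalently, testing the differentiated error equation against $e^\vepsi$) gives
\[
\frac{1}{n}\int_0^R \theta^\vepsi |e^\vepsi_r|^2\,dr = -\vepsi\int_0^R r^{n-1} v^\vepsi_r\, e^\vepsi_r\,dr.
\]
The gap is in how you bound the right-hand side. You propose to split $e^\vepsi_r = u_r - u^\vepsi_r$ and estimate each piece using the a~priori bounds of Theorem~\ref{fine_estimates}. But those bounds on $v^\vepsi$ carry an $\vepsi^{-1/2}$ (e.g.\ $\int r^{2(n-1)}|v^\vepsi|^2 \le C_5/\vepsi$), so any Cauchy--Schwarz pairing of the form $\vepsi\,\|r^{n-1}v^\vepsi\|_{L^2}\,\|\cdot\|$ yields at best $O(\vepsi^{1/2})$ for $\int\theta^\vepsi|e^\vepsi_r|^2$, not the required $O(\vepsi^{3/2})$. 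Your attempted fix, writing ``$\vepsi\cdot\bigl(\vepsi\int r^{2(n-1)}|v^\vepsi|^2\bigr)^{1/2}$'', double-counts the single factor of $\vepsi$: there is only one $\vepsi$ in front of the integral, and it cannot simultaneously stay outside and go under the square root. The route through a~priori bounds on $v^\vepsi$ alone simply cannot reach $\vepsi^{3/4}$.

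The missing idea is to \emph{not} estimate $v^\vepsi$ by itself but to split $v^\vepsi = v - \Delta e^\vepsi$ after one integration by parts. Concretely, from $-\vepsi\int r^{n-1}v^\vepsi_r e^\vepsi_r = -\vepsi[r^{n-1}v^\vepsi e^\vepsi_r]_0^R + \vepsi\int r^{n-1}v^\vepsi\,\Delta e^\vepsi$, writing $v^\vepsi = v - \Delta e^\vepsi$ turns the last integral into $\vepsi\int r^{n-1}v\,\Delta e^\vepsi - \vepsi\int r^{n-1}|\Delta e^\vepsi|^2$. The second term is moved to the \emph{left} as a new coercive contribution, and one arrives at the single identity
\[
\vepsi\int_0^R r^{n-1}|\Delta e^\vepsi|^2\,dr + \frac{1}{n}\int_0^R \theta^\vepsi|e^\vepsi_r|^2\,dr
= \vepsi R^{n-1}\Delta e^\vepsi(R)\,e^\vepsi_r(R) - \vepsi\int_0^R r^{n-1} v_r\, e^\vepsi_r\,dr,
\]
from which \eqref{rate_1} and \eqref{rate_2} fall out \emph{together}. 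The interior term on the right now involves only the fixed quantity $v_r=(\Delta u)_r$ paired with $e^\vepsi_r$, and is absorbed by Young's inequality into $\frac{1}{4n}\int\theta^\vepsi|e^\vepsi_r|^2$ at the cost of an $O(\vepsi^2)$ remainder. The boundary term is the source of the $\vepsi^{3/2}$: one has $|\Delta e^\vepsi(R)|\le |v(R)|+\vepsi=:M$, and $|R^{n-1}e^\vepsi_r(R)|^2 = \int_0^R\bigl((r^{n-1}e^\vepsi_r)^2\bigr)_r\,dr$ is controlled by $\bigl(\int r^{n-1}|\Delta e^\vepsi|^2\bigr)^{1/2}\bigl(\int r^{3(n-1)}|e^\vepsi_r|^2\bigr)^{1/2}$; two applications of Young's inequality then absorb half of each left-hand term and leave an $O(\vepsi^{3/2})$ residue. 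Your plan for \eqref{rate_2} via a separate Hardy-type argument is unnecessary once this joint identity is in place.
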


\begin{proof}
Let
$v:=\Del u=u_{rr}-\frac{n-1}{r} u_r$,  
$v^\vepsi:=\Del u^\vepsi=u^\vepsi_{rr}-\frac{n-1}{r} u^\vepsi_r,$
and $e^\vepsi:=u-u^\vepsi.$ 
On noting that \eqref{moment1_radial} can be written into \eqref{v_eqn_0},
multiplying \eqref{MA1_radial} by $r^{n-1}$ and subtracting the resulting 
equation from \eqref{v_eqn_0} yields the following error equation:
\begin{align}\label{rate_4}
\vepsi \bigl(r^{n-1} v^\vepsi_r\bigr)_r 
+\frac{1}{n} \bigl[ (u_r)^n -(u^\vepsi_r)^n \bigr]_r =0
\qquad\mbox{in } (0,R).
\end{align}
Testing \eqref{rate_4} with $e^\vepsi$ using boundary condition
$e^\vepsi_r(0)=e^\vepsi(R)=0$ we get 
\begin{align}\label{rate_5}
\vepsi \int_0^R r^{n-1} v^\vepsi_r(r) e^\vepsi_r(r)\,dr 
+\frac{1}{n} \int_0^R \theta^\vepsi(r) |e^\vepsi_r(r)|^2\,dr=0,
\end{align}
where $\theta^\vepsi$ is defined by \eqref{rate_6}.

Integrating by parts on the first term of \eqref{rate_5}
and rearranging terms we obtain
\begin{align}\label{rate_7}
\vepsi \int_0^R r^{n-1} |\Del e^\vepsi(r)|^2\, dr
&+\frac{1}{n} \int_0^R \theta^\vepsi(r) |e^\vepsi_r(r)|^2\, dr \\
&=\vepsi R^{n-1} \Del e^\vepsi(R) e^\vepsi_r(R) 
-\vepsi \int_0^R r^{n-1} v_r(r) e^\vepsi_r(r)\,dr.
\nonumber
\end{align}

We now bound the two terms on the right-hand side as follows. 
First, for the second term, a simple application of the Schwarz 
and Young's inequalities gives
\begin{align}\label{rate_8}
\vepsi \int_0^R r^{n-1} v_r(r) e^\vepsi_r(r)\,dr
&\leq \frac{1}{4n} \int_0^R \theta^\vepsi(r) |e^\vepsi_r(r)|^2\,dr 
+\vepsi^2 n \int_0^R \frac{r^{2(n-1)}}{\theta^\vepsi(r)} |v_r(r)|^2\,dr.
\nonumber
\end{align}
Second, to bound the first term on the right-hand side of
\eqref{rate_7}, we use the boundary condition $v^\vepsi(R)=\vepsi$ to get 
\[
|\Del e^\vepsi(R)| =|v(R)-v^\vepsi(R)|=|v(R)-\vepsi|\leq |v(R)|+1 =:M,
\]
and
\begin{align*}
\bigl|R^{n-1} e^\vepsi_r(R)\bigr|^2 
&= \int_0^R \bigl((r^{n-1} e^\vepsi_r(r))^2\bigr)_r\, dr
= 2\int_0^R  r^{2(n-1)} e^\vepsi_r(r)\, \Del e^\vepsi(r) \, dr \\
&\leq 2\Bigl(\int_0^R  r^{n-1} |\Del e^\vepsi(r)|^2\, dr\Bigr)^{\frac12}
 \Bigl(\int_0^R  r^{3(n-1)} |e^\vepsi_r(r)|^2\, dr\Bigr)^{\frac12}. 
\end{align*}
Hence by Young's inequality, we obtain
\begin{align}\label{rate_9}
|\vepsi R^{n-1} \Del e^\vepsi(R) e^\vepsi_r(R)| 
& \leq \sqrt{2}\vepsi M 
\Bigl(\int_0^R  r^{n-1} |\Del e^\vepsi(r)|^2\, dr\Bigr)^{\frac14}
\Bigl(\int_0^R  r^{3(n-1)} |e^\vepsi_r(r)|^2\, dr\Bigr)^{\frac14}  \\
& \leq \frac{\vepsi}{2} \int_0^R  r^{n-1} |\Del e^\vepsi(r)|^2\, dr
+ 2\vepsi M^{\frac43} \Bigl(\int_0^R r^{3(n-1)}|e^\vepsi_r(r)|^2\,dr\Bigr)^{\frac13} \nonumber\\
&\leq \frac{\vepsi}{2} \int_0^R  r^{n-1} |\Del e^\vepsi(r)|^2\, dr
 + \frac{1}{4n} \int_0^R \theta^\vepsi(r) |e^\vepsi_r(r)|^2\,dr 
 + \vepsi^{\frac32} n M^2 C \nonumber
\end{align}
for some $\vepsi$-independent constant $C=C(f,R,n)>0$.

Combining \eqref{rate_7}--\eqref{rate_9} yields
\begin{align}\label{rate_10}
\vepsi \int_0^R r^{n-1} |\Del e^\vepsi(r)|^2\, dr
&+\frac{1}{n} \int_0^R \theta^\vepsi(r) |e^\vepsi_r(r)|^2\, dr \\
&\leq 2\vepsi^2 n \int_0^R \frac{r^{2(n-1)}}{\theta^\vepsi(r)} |v_r(r)|^2\,dr
+\vepsi^{\frac32} n M^2 C. \nonumber
\end{align}
Thus, \eqref{rate_1} and \eqref{rate_2} follow from the fact that 
$\|r^{n-1} (\theta^\vepsi)^{-1}\|_{L^\infty}<\infty$.
\end{proof}

\begin{cor}\label{convergence_rate_thm1a}
Inequality \eqref{rate_1} implies that there exists an $\vepsi$-independent
constant $C>0$ such that
\begin{align}\label{rate_1a}
\Bigl(\int_0^R r^{n-1}|u_r(r)-u^\vepsi_r(r)|^2\, dr\Bigr)^{\frac12} 
\leq \vepsi^{\frac34}\, C C_{10}.
\end{align}
\end{cor}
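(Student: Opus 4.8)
The plan is to deduce \eqref{rate_1a} directly from \eqref{rate_1} by comparing the weight $r^{n-1}$ with the weight $\theta^\vepsi(r)$ appearing in \eqref{rate_1}. The whole matter reduces to the pointwise bound
\[
r^{n-1} \leq C^2\, \theta^\vepsi(r) \qquad\text{for all } r\in (0,R],
\]
with a $\vepsi$-independent constant $C$; equivalently, $\|r^{n-1}(\theta^\vepsi)^{-1}\|_{L^\infty((0,R))}<\infty$, which is precisely the fact already invoked at the end of the proof of Theorem \ref{convergence_rate_thm1}. Granting this comparison, since $\theta^\vepsi>0$ on $(0,R]$ by \eqref{rate_6}, we may write
\[
\int_0^R r^{n-1}|u_r(r)-u^\vepsi_r(r)|^2\, dr
\leq C^2 \int_0^R \theta^\vepsi(r)|u_r(r)-u^\vepsi_r(r)|^2\, dr
\leq C^2 \vepsi^{\frac32} C_{10}^2,
\]
where the last step is \eqref{rate_1} squared; taking square roots yields \eqref{rate_1a} with the stated constant $CC_{10}$.

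To establish the weight comparison I would use the explicit formula \eqref{solution} for the convex solution $u$. All $n$ summands in the definition \eqref{rate_6} of $\theta^\vepsi$ are nonnegative, because $u_r\geq 0$ and, by Theorems \ref{convexity_thm1} and \ref{momnet1_existence}, $u^\vepsi_r\geq 0$ on $[0,R]$; keeping only the $j=n-1$ term gives $\theta^\vepsi(r)\geq (u_r(r))^{n-1}$. On the other hand, the convex branch of \eqref{solution} gives $u_r(r)=(nL_f(r))^{1/n}$, and the hypothesis $f\geq f_0>0$ on $[0,R]$ forces $L_f(r)=\int_0^r t^{n-1}f(t)\, dt\geq f_0 r^n/n$, hence $u_r(r)\geq f_0^{1/n}\, r$ on $(0,R]$. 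Combining these two inequalities yields $\theta^\vepsi(r)\geq f_0^{(n-1)/n}\, r^{n-1}$, so the comparison holds with $C=f_0^{-(n-1)/(2n)}$, which depends only on $f_0$ and $n$.

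The only point requiring any care is the behavior as $r\to 0^+$, where both $r^{n-1}$ and $\theta^\vepsi(r)$ degenerate to $0$; the argument above shows their ratio stays bounded precisely because the strict positivity $f\geq f_0>0$ forces the linear lower bound $u_r(r)\geq f_0^{1/n}\, r$ near the origin. On any interval $[\delta,R]$ with $\delta>0$ the comparison is immediate from continuity and strict positivity of $u_r$. I do not expect a genuine obstacle: the corollary is essentially a repackaging of the weighted estimate \eqref{rate_1} together with a weight bound already used in the preceding proof.
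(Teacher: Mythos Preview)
Your proof is correct and is exactly the approach the paper intends: the paper omits the proof of the corollary but has already invoked the bound $\|r^{n-1}(\theta^\vepsi)^{-1}\|_{L^\infty}<\infty$ at the end of the proof of Theorem~\ref{convergence_rate_thm1} (and again, more explicitly, in the proof of Theorem~\ref{convergence_rate_thm2}, where it records $c_1\geq r^{n-1}(\theta^\vepsi)^{-1}\geq c_0>0$). Your derivation of that bound from $\theta^\vepsi(r)\geq (u_r(r))^{n-1}$ together with $u_r(r)=(nL_f(r))^{1/n}\geq f_0^{1/n}r$ is clean and supplies the detail the paper leaves implicit.
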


Since the proof is simple, we omit it.

\begin{thm}\label{convergence_rate_thm2}
Under the assumptions of Theorem {\rm \ref{convergence_rate_thm1}},
there also holds the following estimate:
\begin{align}\label{rate_3}
\Bigl(\int_0^R r^{n-1}|u(r)-u^\vepsi(r)|^2\, dr\Bigr)^{\frac12} \leq \vepsi\, C_{12}
\end{align}
for some positive $\vepsi$-independent constant 
$C_{12}=C_{12}(R,n,u,C_{11})$.
\end{thm}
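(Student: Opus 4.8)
The proof will be an Aubin--Nitsche duality argument; note that a plain weighted Poincar\'e inequality only turns Corollary~\ref{convergence_rate_thm1a} into an $O(\vepsi^{3/4})$ bound, so duality must recover the extra $\vepsi^{1/4}$. Write $e^\vepsi:=u-u^\vepsi$, $v^\vepsi:=\Del u^\vepsi$, $v:=\Del u$, and $\norm{e^\vepsi}_*^2:=\int_0^R r^{n-1}|e^\vepsi(r)|^2\,dr$. Integrating the error equation \eqref{rate_4} over $(0,r)$ (using $u_r(0)=u^\vepsi_r(0)=0$ and $r^{n-1}v^\vepsi_r\to0$ as $r\to0^+$) gives the pointwise identity $\tfrac1n\theta^\vepsi e^\vepsi_r=-\vepsi r^{n-1}v^\vepsi_r$ in $(0,R)$, with $\theta^\vepsi$ as in \eqref{rate_6}. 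Two $\vepsi$-independent facts about $\theta^\vepsi$ will be used throughout: keeping only the $j=n-1$ term and using $f\ge f_0>0$ yields $\theta^\vepsi(r)\ge(u_r(r))^{n-1}\ge c_0\,r^{n-1}$ for some $c_0>0$, while Theorem~\ref{fine_estimates}(ii) gives $\theta^\vepsi(r)\le C$ on $[0,R]$.

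Introduce the dual problem: let $z=z^\vepsi$ solve $-\tfrac1n(\theta^\vepsi z_r)_r=r^{n-1}e^\vepsi$ in $(0,R)$ with $z(R)=0$, $z_r(0)=0$; since $\theta^\vepsi\sim c\,r^{n-1}$ at the origin is the natural weight for the radial Laplacian, this problem is well posed. Testing it with $e^\vepsi$ (the boundary terms vanish because $e^\vepsi(R)=0$ and $\theta^\vepsi(0)=0$) and substituting the pointwise identity above gives $\norm{e^\vepsi}_*^2=-\vepsi\int_0^R r^{n-1}v^\vepsi_r z_r\,dr$. One integration by parts (moving $\p_r$ onto $r^{n-1}z_r$, using $r^{n-1}v^\vepsi=w^\vepsi_r\to0$ at $r=0$ and $v^\vepsi(R)=\vepsi$), the decomposition $v^\vepsi=v-\Del e^\vepsi$, and a further integration by parts in the $v$-term then produce the working identity
\begin{equation*}
\norm{e^\vepsi}_*^2=\vepsi R^{n-1}z_r(R)\bigl(v(R)-\vepsi\bigr)-\vepsi\int_0^R r^{n-1}v_r z_r\,dr-\vepsi\int_0^R r^{n-1}\Del e^\vepsi\,\Del z\,dr.
\end{equation*}

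The three terms are estimated so as to absorb $\norm{e^\vepsi}_*$. For the first, integrating the dual ODE over $(0,R)$ and using $\theta^\vepsi(R)\ge c_0R^{n-1}$ gives $|z_r(R)|\le C\norm{e^\vepsi}_*$, hence this term is $\le C\vepsi\norm{e^\vepsi}_*$. For the second, testing the dual equation with $z$ and using the weighted Poincar\'e inequality and $\theta^\vepsi\ge c_0r^{n-1}$ yields the energy estimate $\int_0^R r^{n-1}|z_r|^2\,dr\le C\norm{e^\vepsi}_*^2$; combined with $\int_0^R r^{n-1}|v_r|^2\,dr<\infty$ (already implicit in $C_{10},C_{11}$) it is $\le C\vepsi\norm{e^\vepsi}_*$. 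For the third, Cauchy--Schwarz, \eqref{rate_2} (i.e.\ $\int_0^R r^{n-1}|\Del e^\vepsi|^2\,dr\le C_{11}^2\sqrt\vepsi$) and the weighted $H^2$-bound $\int_0^R r^{n-1}|\Del z|^2\,dr\le C\norm{e^\vepsi}_*^2$ give $\le C\vepsi^{5/4}\norm{e^\vepsi}_*$. Summing and using Young's inequality gives $\norm{e^\vepsi}_*^2\le\tfrac12\norm{e^\vepsi}_*^2+C\vepsi^2$, which is \eqref{rate_3} with $C_{12}=C_{12}(R,n,u,C_{11})$.

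The main obstacle is the last ingredient, the $\vepsi$-uniform weighted $H^2$-estimate for the dual problem, whose coefficient $\theta^\vepsi$ both depends on $\vepsi$ and degenerates like $r^{n-1}$ at $r=0$. I would obtain it by eliminating $z_{rr}$ from the dual equation to get
\begin{equation*}
r^{n-1}\Del z=-\,n\,\frac{r^{2(n-1)}}{\theta^\vepsi}\,e^\vepsi+\frac{r^{n-2}}{\theta^\vepsi}\bigl[(n-1)\theta^\vepsi-r\theta^\vepsi_r\bigr]z_r,
\end{equation*}
then bounding the first term by $c_0^{-2}r^{n-1}|e^\vepsi|$, and for the second term exploiting the crucial cancellation $(n-1)\theta^\vepsi-r\theta^\vepsi_r=O(r^n)$ near $r=0$ (the leading $r^{n-1}$ parts of $(n-1)\theta^\vepsi$ and of $r\theta^\vepsi_r$ cancel), which, together with $c_0r^{n-1}\le\theta^\vepsi\le C$ and the a priori bounds on $u^\vepsi$ from Theorem~\ref{fine_estimates}, makes it dominated by $(\theta^\vepsi)^{1/2}|z_r|$ in weighted $L^2$ and thus controlled by the energy estimate; alternatively one runs the whole argument on $[\del,R]$ and lets $\del\to0^+$.
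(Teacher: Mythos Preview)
Your approach is essentially the same as the paper's: an Aubin--Nitsche duality argument based on the dual problem $\bigl(\theta^\vepsi z_r\bigr)_r = \pm\,n\,r^{n-1}e^\vepsi$ with $z(R)=0$, $z_r(0)=0$, followed by substitution of the integrated error equation, one integration by parts, and term-by-term estimation using \eqref{rate_2} together with the weighted $H^2$-regularity bound \eqref{rate_3a} for the dual solution. The only differences are cosmetic---a sign convention, one extra integration by parts in the $v$-term (which merely shifts a piece of the boundary contribution into the first term), and that you supply a sketch for the $\vepsi$-uniform weighted $H^2$ estimate that the paper simply asserts from the two-sided comparability $c_0\le r^{n-1}(\theta^\vepsi)^{-1}\le c_1$.
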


\begin{proof}
Let $\theta^\vepsi$ be defined by \eqref{rate_6}, and  $e^\vepsi$, $v$ and
$v^\vepsi$ be same as in Theorem \ref{convergence_rate_thm1}. Consider the 
following auxiliary problem:
\begin{subequations}
\label{MA_radial_lin}
\begin{align}\label{MA1_radial_lin}
\bigl( \theta^\vepsi \phi_r \bigr)_r 
&= n r^{n-1} e^\vepsi  \qquad\mbox{in } (0,R),\\
\phi(R) &=0, \ \ \psi_r(0)=0. \label{MA2_radial_lin} 
\end{align}
\end{subequations}
We note that the left-hand side of \eqref{MA1_radial_lin} is 
the linearization of \eqref{MA1_radial} at $\theta^\vepsi$.

Since $\theta^\vepsi>0$ in $(0,R]$, then \eqref{MA1_radial_lin} is a
linear elliptic equation. Using the fact that  
$c_1\geq r^{n-1}(\theta^\vepsi)^{-1} \geq c_0>0$ in $[0,R]$
for some $\vepsi$-independent positive constants $c_0$ and $c_1$,
it is easy to check that problem \eqref{MA_radial_lin}
has a unique classical solution $\phi$. Moreover,
\begin{align} \label{rate_3a}
\int_0^R r^{n-1} |\phi_{rr}(r)|^2\, dr 
+\int_0^R r^{n-1} |\phi_r(r)|^2\, dr
\leq \hat{C} \int_0^R r^{n-1} |e^\vepsi(r)|^2 \,dr 
\end{align}
for some $\vepsi$-independent constant $\hat{C}=\hat{C}(f,R,n,c_0,c_1)>0$.

Testing \eqref{MA1_radial_lin} by $e^\vepsi$, 
using the facts that $\phi_r(0)=\phi(R)=0$, $e^\vepsi(R)=0$ and 
$v^\vepsi(R)=\vepsi$ as well as error equation  \eqref{rate_4} we get
\begin{align}\label{rate_3b}
\int_0^R r^{n-1} |e^\vepsi(r)|^2 \, dr 
&=-\frac{1}{n}\int_0^R \theta^\vepsi(r) \phi_r(r) e^\vepsi_r(r)\, dr 
=\vepsi \int_0^R r^{n-1} v^\vepsi_r(r) \phi_r(r) \, dr\\ 
&\hspace{-0.25cm}=\vepsi  R^{n-1} v^\vepsi(R) \phi_r(R) 
  - \vepsi \int_0^R  r^{n-1} v^\vepsi(r) \Del \phi(r) \, dr \nonumber \\
&\hspace{-0.25cm}=\vepsi^2  R^{n-1} \phi_r(R) 
  +\vepsi \int_0^R  r^{n-1} [v(r)-v^\vepsi(r)] \Del \phi(r) \, dr \nonumber \\
&\hskip 1.0in 
  -\vepsi \int_0^R  r^{n-1} v(r) \Del \phi(r) \, dr, \nonumber
\end{align}
where we have used the short-hand notation $\Del\phi=r^{n-1}[ \phi_{rr} 
+(n-1) r^{-1} \phi_r ]$.

For each term on the right-hand side of \eqref{rate_3b} we have 
the following estimates:
\begin{align*}
&\vepsi^2 R^{n-1} \phi_r(R) 
= \frac{\vepsi^2}{R} \int_0^R (r^n \phi_r(r))_r \,dr \\
&\hskip 0.5in 
\leq \vepsi^2 R^{\frac{n}{2}} 
\Bigl(\int_0^R r^{n-1} |\phi_{rr}(r)|^2\, dr\Bigr)^{\frac12}
+\vepsi^2 \sqrt{n} R^{\frac{n-2}2}\Bigl(\int_0^R r^{n-1} |\phi_r(r)|^2\, dr\Bigr)^{\frac12},\\
&\vepsi \int_0^R  r^{n-1} [v(r)-v^\vepsi(r)] \Del \phi(r) \, dr \\
&\hskip 0.5in
\leq \vepsi \Bigl( \int_0^R r^{n-1} |v(r)-v^\vepsi(r)|^2\, dr\Bigr)^{\frac12}
\Bigl(\int_0^R r^{n-1} |\Del\phi(r)|^2\, dr\Bigr)^{\frac12}\\
&\hskip 0.5in
\leq \vepsi^{\frac54} C_{11} 
\Bigl(\int_0^R r^{n-1} |\Del\phi(r)|^2\, dr\Bigr)^{\frac12},\\
&-\vepsi \int_0^R  r^{n-1} v(r) \Del \phi(r) \, dr
\leq \vepsi\Bigl(\int_0^R r^{n-1} |v(r)|^2\, dr \Bigr)^{\frac12}
 \Bigl(\int_0^R r^{n-1} |\Del\phi(r)|^2\, dr \Bigr)^{\frac12}.  
\end{align*}
Substituting the above estimates into \eqref{rate_3b} and using
\eqref{rate_3a} we get
\begin{align}\label{rate_3c}
&\int_0^R r^{n-1} |e^\vepsi(r)|^2 \, dr \\
&\qquad
\leq \vepsi^2 R^{\frac{n}2} \left\{ 
\Bigl( \int_0^R r^{n-1} |\phi_{rr}(r)|^2\, dr\Bigr)^{\frac12} 
+\frac{\sqrt{n}}{R} \Bigl( \int_0^R r^{n-1} |\phi_r(r)|^2\, dr\Bigr)^{\frac12} \right\}
\nonumber \\
&\hskip 1in
+ \vepsi \bigl(\vepsi^{\frac14} C_{11}+C_u\bigr)
 \Bigl(\int_0^R r^{n-1} |\Del\phi(r)|^2\, dr \Bigr)^{\frac12} 
\nonumber \\
&\qquad \leq 4\vepsi \bigl(\vepsi R^{\frac{n}2} +\vepsi \sqrt{n} R^{-1} 
+\vepsi^{\frac14} C_{11} + C_u\bigr) \hat{C} 
\Bigl( \int_0^R r^{n-1} |e^\vepsi(r)|^2 \, dr \Bigr)^{\frac12} \nonumber
\end{align}
for some $\vepsi$-independent constant $C_u=C(u)>0$.

Hence, by \eqref{rate_3c} we conclude that \eqref{rate_3} holds 
with $C_{12}= 4\bigl(\vepsi R^{\frac{n}2} + \vepsi \sqrt{n} R^{-1}
+\vepsi^{\frac14} C_{11} + C_u\bigr) \hat{C}$.  The proof is complete.
\end{proof}

Since the proofs of Theorem \ref{convergence_rate_thm1} and
\ref{convergence_rate_thm2} only rely on the ellipticity of 
the linearization of the Monge-Amp\`ere operator,  hence,
the results of both theorems can be easily extended to
the general Monge-Amp\`ere problem \eqref{MAeqn} 
and its vanishing moment approximation \eqref{moment}. 

\begin{thm}\label{convergence_rate_thm3}
Let $u$ denote the strictly convex viscosity solution to problem
\eqref{MAeqn}
and $u^\vepsi$ be a 
classical solution to problem 
\eqref{moment}
Assume $u\in W^{2,\infty}(\Ome)\cap H^3(\Ome)$ and $u^\vepsi$ is either convex
or ``almost convex" in $\Ome$ (which means that $u^\vepsi$ is
convex in $\Ome$ minus an $\vepsi$-neighborhood of $\p\Ome$;
see Theorem {\rm \ref{convexity_thm}} for a precise description).  
Then there holds the following estimates:
\begin{align}\label{rate_14}
\Bigl(\int_\Ome |\nab u- \nab u^\vepsi|^2\, dx\Bigr)^{\frac12} 
&\leq \vepsi^{\frac34}\, C_{13},\\
\Bigl(\int_\Ome |\Del u-\Del u^\vepsi|^2\, dx\Bigr)^{\frac12} 
&\leq \vepsi^{\frac14}\,C_{14}, \label{rate_15}\\
\Bigl(\int_\Ome |u- u^\vepsi|^2\, dx\Bigr)^{\frac12} 
&\leq \vepsi\, C_{15},  \label{rate_16}
\end{align}
where $C_j=C_j(\|\nab \Del u\|_{L^2})$ for $j=13,14, 15$ are 
positive $\vepsi$-independent constants.

\end{thm}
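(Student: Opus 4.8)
The plan is to transcribe, essentially line by line, the proofs of Theorems~\ref{convergence_rate_thm1} and \ref{convergence_rate_thm2} to the $n$-dimensional setting, with the scalar weight $\theta^\vepsi$ of \eqref{rate_6} replaced by the \emph{averaged cofactor matrix}
\[
\Theta^\vepsi:=\int_0^1 \cof\bigl(D^2 u^\vepsi + t\,D^2 e^\vepsi\bigr)\,dt,\qquad e^\vepsi:=u-u^\vepsi .
\]
Two algebraic facts make this work. First, by the chain rule and Jacobi's formula,
\[
\det(D^2 u)-\det(D^2 u^\vepsi)=\int_0^1 \frac{d}{dt}\det\bigl(D^2 u^\vepsi+t\,D^2 e^\vepsi\bigr)\,dt=\Theta^\vepsi:D^2 e^\vepsi .
\]
Second, the rows of the cofactor matrix of any Hessian are divergence free, hence so are those of $\Theta^\vepsi$, so that $\Theta^\vepsi:D^2 e^\vepsi=\Div(\Theta^\vepsi\nab e^\vepsi)$. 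Subtracting \eqref{MAeqn1} from \eqref{moment1} then yields the error equation
\[
\eps\,\Del^2 u^\vepsi+\Div\bigl(\Theta^\vepsi\nab e^\vepsi\bigr)=0\quad\text{in }\Ome,\qquad e^\vepsi=0,\ \ \Del u^\vepsi=\eps\ \text{ on }\p\Ome ,
\]
the exact analogue of \eqref{rate_4}. Since $u$ is strictly convex and $u^\vepsi$ is convex (or almost convex), the matrix $D^2 u^\vepsi+tD^2 e^\vepsi$ is, for $t$ close to $1$, a perturbation of the uniformly positive definite $D^2 u$, so $\Theta^\vepsi\ge c_0 I$ with an $\vepsi$-independent $c_0>0$; together with $u\in W^{2,\infty}(\Ome)$ this makes $\Theta^\vepsi$ uniformly elliptic. (In the almost-convex case one first restricts to $\Ome$ minus the $O(\vepsi)$ boundary layer on which $D^2 u^\vepsi$ may fail to be positive semidefinite, in the spirit of the $R-r_*=O(\vepsi)$ argument of Theorem~\ref{convexity_thm}.)

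For \eqref{rate_14} and \eqref{rate_15} I would test the error equation with $e^\vepsi$. Integrating by parts twice in the biharmonic term (using $e^\vepsi|_{\p\Ome}=0$, $\Del u^\vepsi|_{\p\Ome}=\eps$ and $\Del u^\vepsi=\Del u-\Del e^\vepsi$) and once in the second-order term gives the energy identity
\[
\eps\int_\Ome|\Del e^\vepsi|^2\,dx+\int_\Ome\Theta^\vepsi\nab e^\vepsi\cdot\nab e^\vepsi\,dx=-\eps\int_\Ome\nab\Del u\cdot\nab e^\vepsi\,dx+\eps\int_{\p\Ome}(\Del u-\eps)\,\normd{e^\vepsi}\,dS ,
\]
the $n$-dimensional counterpart of \eqref{rate_7}. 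The volume term on the right is absorbed by Cauchy--Schwarz, Young and the coercivity of $\Theta^\vepsi$ at the expense of an $O(\vepsi^2)$ remainder. The boundary term is the rate-limiting one, exactly as in the radial case: bounding $|\Del u-\eps|$ by $\|\Del u\|_{L^\infty(\p\Ome)}+1$, using the multiplicative trace inequality $\|\normd{e^\vepsi}\|_{L^2(\p\Ome)}^2\le C\|\nab e^\vepsi\|_{L^2(\Ome)}\|e^\vepsi\|_{H^2(\Ome)}$ together with the elliptic estimate $\|e^\vepsi\|_{H^2(\Ome)}\le C\|\Del e^\vepsi\|_{L^2(\Ome)}$ (valid since $\Ome$ is convex and $e^\vepsi|_{\p\Ome}=0$), and finally the weighted Young inequality $\vepsi\,\|\nab e^\vepsi\|^{1/2}\|\Del e^\vepsi\|^{1/2}\le\tfrac14\vepsi\|\Del e^\vepsi\|^2+\tfrac14 c_0\|\nab e^\vepsi\|^2+C\vepsi^{3/2}$. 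Absorbing the first two terms on the left yields $c_0\|\nab e^\vepsi\|_{L^2}^2\le C\vepsi^{3/2}$ and $\vepsi\|\Del e^\vepsi\|_{L^2}^2\le C\vepsi^{3/2}$, which are \eqref{rate_14} and \eqref{rate_15}.

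For the $L^2$ estimate \eqref{rate_16} I would run the duality (Aubin--Nitsche) argument of Theorem~\ref{convergence_rate_thm2}: let $\phi$ solve the linearized adjoint problem $\Div(\Theta^\vepsi\nab\phi)=e^\vepsi$ in $\Ome$, $\phi=0$ on $\p\Ome$, which by uniform ellipticity of $\Theta^\vepsi$ and convexity of $\Ome$ has a unique solution with $\|\phi\|_{H^2(\Ome)}\le C\|e^\vepsi\|_{L^2(\Ome)}$, $C$ independent of $\vepsi$ (the analogue of \eqref{rate_3a}). Testing with $e^\vepsi$, using the symmetry of $\Theta^\vepsi$, then the error equation to replace $\Div(\Theta^\vepsi\nab e^\vepsi)$ by $-\eps\Del^2 u^\vepsi$, and integrating by parts twice (using $\phi|_{\p\Ome}=0$ and $\Del u^\vepsi|_{\p\Ome}=\eps$) gives
\[
\int_\Ome|e^\vepsi|^2\,dx=-\eps\int_\Ome\Del u^\vepsi\,\Del\phi\,dx+\eps^2\int_{\p\Ome}\normd{\phi}\,dS=-\eps\int_\Ome(\Del u-\Del e^\vepsi)\Del\phi\,dx+\eps^2\int_{\p\Ome}\normd{\phi}\,dS .
\]
Bounding the right-hand side by Cauchy--Schwarz, the trace theorem, the $H^2$ bound on $\phi$, and \eqref{rate_15} yields $\|e^\vepsi\|_{L^2}^2\le C(\vepsi+\vepsi^{5/4}+\vepsi^2)\|e^\vepsi\|_{L^2}\le C\vepsi\|e^\vepsi\|_{L^2}$, i.e. \eqref{rate_16}.

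The main obstacle is the uniform (in $\vepsi$) coercivity and ellipticity of $\Theta^\vepsi$ and, correspondingly, the $\vepsi$-independent $H^2$ bound on the adjoint solution $\phi$. For a genuinely convex $u^\vepsi$ the lower bound $\Theta^\vepsi\ge c_0 I$ is immediate from the strict convexity of $u$; but in the almost-convex case one must show that the contribution of the thin boundary layer --- where $D^2 u^\vepsi$ may be indefinite and its eigenvalues merely $O(1)$ rather than nonnegative --- does not spoil the energy identity, and one must check that the elliptic-regularity constants for $\Div(\Theta^\vepsi\nab\cdot)$ do not degenerate as $\vepsi\to0^+$. Both points are handled by the devices already used in Sections~\ref{section-4}--\ref{section-6} (the $R-r_*=O(\vepsi)$ estimate, the weighted test functions, and the bounds of Theorem~\ref{fine_estimates}); everything else is a routine transcription of the one-dimensional computations.
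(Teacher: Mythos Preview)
Your proposal is correct and follows essentially the same route as the paper's proof: derive the error equation, test with $e^\vepsi$ to obtain an energy identity containing $\vepsi\|\Del e^\vepsi\|_{L^2}^2$ plus a coercive second-order form, then handle the boundary term by a multiplicative trace inequality and Young's inequality to get the $\vepsi^{3/4}$ and $\vepsi^{1/4}$ rates, and finally run the Aubin--Nitsche duality with the linearized operator for the $L^2$ estimate. The one noteworthy difference is your choice of linearization: you use the \emph{averaged} cofactor $\Theta^\vepsi=\int_0^1\cof\bigl((1-t)D^2u^\vepsi+tD^2u\bigr)\,dt$ together with the divergence-free row identity, whereas the paper invokes a mean-value form $\cof\bigl(tD^2u+(1-t)D^2u^\vepsi\bigr)$ ``for some $t\in[0,1]$''. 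Your formulation is the cleaner of the two (the scalar mean-value theorem does not literally furnish a single $t$ in the matrix setting), and it makes the passage from $\det(D^2u)-\det(D^2u^\vepsi)$ to $\Div(\Theta^\vepsi\nab e^\vepsi)$ transparent. You also correctly flag the uniform ellipticity of $\Theta^\vepsi$ in the almost-convex case as the genuine technical point; the paper asserts the corresponding coercivity $\theta^\vepsi\nab e^\vepsi\cdot\nab e^\vepsi\ge\theta_0|\nab e^\vepsi|^2$ without further comment, so your discussion of the $O(\vepsi)$ boundary-layer contribution is at least as detailed as what the paper provides.
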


\begin{proof}
Since the proof follows the exact same lines as those 
for Theorem \ref{convergence_rate_thm1}, we just briefly 
highlight the main steps.
First, the error equation \eqref{rate_4} is replaced by 
\begin{align}\label{rate_20}
\vepsi \Del v^\vepsi + \mbox{det}(D^2 u) -\mbox{det}(D^2 u^\vepsi) =0
\qquad\mbox{in } \Ome,
\end{align}
where $v^\vepsi=\Del u^\vepsi$.  
Next, equation \eqref{rate_6} becomes 
\begin{align}\label{rate_21}
\vepsi \int_\Ome |\Del e^\vepsi|^2\, dx
&+\int_\Ome \theta^\vepsi\nab e^\vepsi\cdot \nab e^\vepsi\, dx 
=\int_{\p \Ome} \Del e^\vepsi \frac{\p e^\vepsi}{\p \nu}\, dS
-\vepsi \int_\Ome \nab v \cdot\nab e^\vepsi\,dx,  
\end{align}
where
\begin{equation}\label{rate_22}
\theta^\vepsi= \Phi^\vepsi:=\mbox{\rm cof} (tD^2u+(1-t) D^2 u^\vepsi) 
\qquad\mbox{for some } t\in [0,1],
\end{equation}
now stands for the cofactor matrix of $tD^2u+(1-t) D^2 u^\vepsi$.
Since $u$ is assumed to be strictly convex, and $u^\vepsi$ is 
``almost convex",  there exists a positive constant $\theta_0$
such that 
%
$\theta^\vepsi\nab e^\vepsi\cdot \nab e^\vepsi\geq \theta_0 |\nab e^\vepsi|^2.$

It remains to derive a boundary estimate that is analogous to \eqref{rate_9}.
To the end, by the boundary condition $v^\vepsi|_{\p\Ome}=\vepsi$ and
the trace inequality, we have
\begin{align}\label{rate_23}
\int_{\p \Ome} \Del e^\vepsi \frac{\p e^\vepsi}{\p \nu}\, ds
&\leq \vepsi \bigl(\vepsi |\p\Ome| + \|\Del u\|_{L^2(\p\Ome)}^2 \bigr)^{\frac12} 
\Bigl\|\frac{\p e^\vepsi}{\p \nu}\Bigr\|_{L^2(\p\Ome)} \\
&\leq \vepsi M \|\nab e^\vepsi\|_{L^2(\Ome)}^{\frac12} 
\|\Del e^\vepsi\|_{L^2(\Ome)}^{\frac12} \nonumber \\
&\leq \frac{\vepsi}{2} \|\Del e^\vepsi\|_{L^2(\Ome)}^2 
+ M^{\frac43} \vepsi \|\nab e^\vepsi\|_{L^2(\Ome)}^{\frac23} \nonumber  \\
&\leq \frac{\vepsi}{2} \|\Del e^\vepsi\|_{L^2(\Ome)}^2 
+\frac{\theta_0}{4} \|\nab e^\vepsi\|_{L^2(\Ome)}^2 
+\frac{\vepsi^{\frac32} M^2}{\theta_0}. \nonumber
\end{align}
The desired estimates \eqref{rate_14} and \eqref{rate_15}
follow from combining \eqref{rate_21} and \eqref{rate_23}.

Finally, \eqref{rate_16} can be derived by using the same duality argument 
as that used in the proof of Theorem \ref{convergence_rate_thm2}.
We leave the details to the interested reader.
\end{proof}


\section{Numerical Experiments}\label{section-7}
In this section, we numerically verify the theoretical results given 
in the previous sections.  To this end, we solve 
\eqref{moment1_radial}--\eqref{moment4_radial}  in the domain $\Ome=(0,1)$.
We use the Hermite cubic finite element to construct our finite 
element space (cf. \cite{Brenner_Scott08}), and we use the following data:
\begin{align*}
f = (1+r^2)e^{n r^2/2},\qquad g(1) = e^{\frac12}.
\end{align*}
It can be readily checked that the exact solution is $u = e^{r^2/2}$.

We plot the computed solution and corresponding error 
in Figure \ref{Test614Figure1} with parameters
$n=4,\ \eps = 10^{-1}, h=4.0\times 10^{-3}$. We also plot the computed 
Laplacian, $\Del \ue:=\ue_{rr}+\frac{2}{r}\ue_r$, as well.
As shown by the pictures, the vanishing moment methodology accurately 
captures the convex solution in higher dimensions. Also, as expected, 
the Laplacian of $\ue$ is strictly positive 
(cf. Theorem \ref{convexity_thm1}).

Next, we plot both $u_r$ and $u_{rr}$ in two and four dimensions 
in Figures \ref{Test614Figure2}--\ref{Test614Figure3}
with $\eps$-values, $10^{-1}, 10^{-3}, 10^{-5}$.  Recall that the Hessian 
matrix of $\ue$ only has two distinct eigenvalues
$\ue_{rr}$ and $\frac1r\ue_{r}$.
As seen in Figure \ref{Test614Figure2}, $\ue_r$ is positive 
for all $\eps$-values and for both dimensions $n=2$ and $n=4$.
This result is in accordance with Corollary \ref{existence_cor}.  
Finally, Figure \ref{Test614Figure3} shows that
$\ue_{rr}$ is strictly positive except for a small $\eps$-neighborhood 
of the boundary, which agrees with
the theoretical results established in Theorem \ref{convexity_thm}.

\begin{figure}[htb]
\centerline{
\includegraphics[scale=0.11]{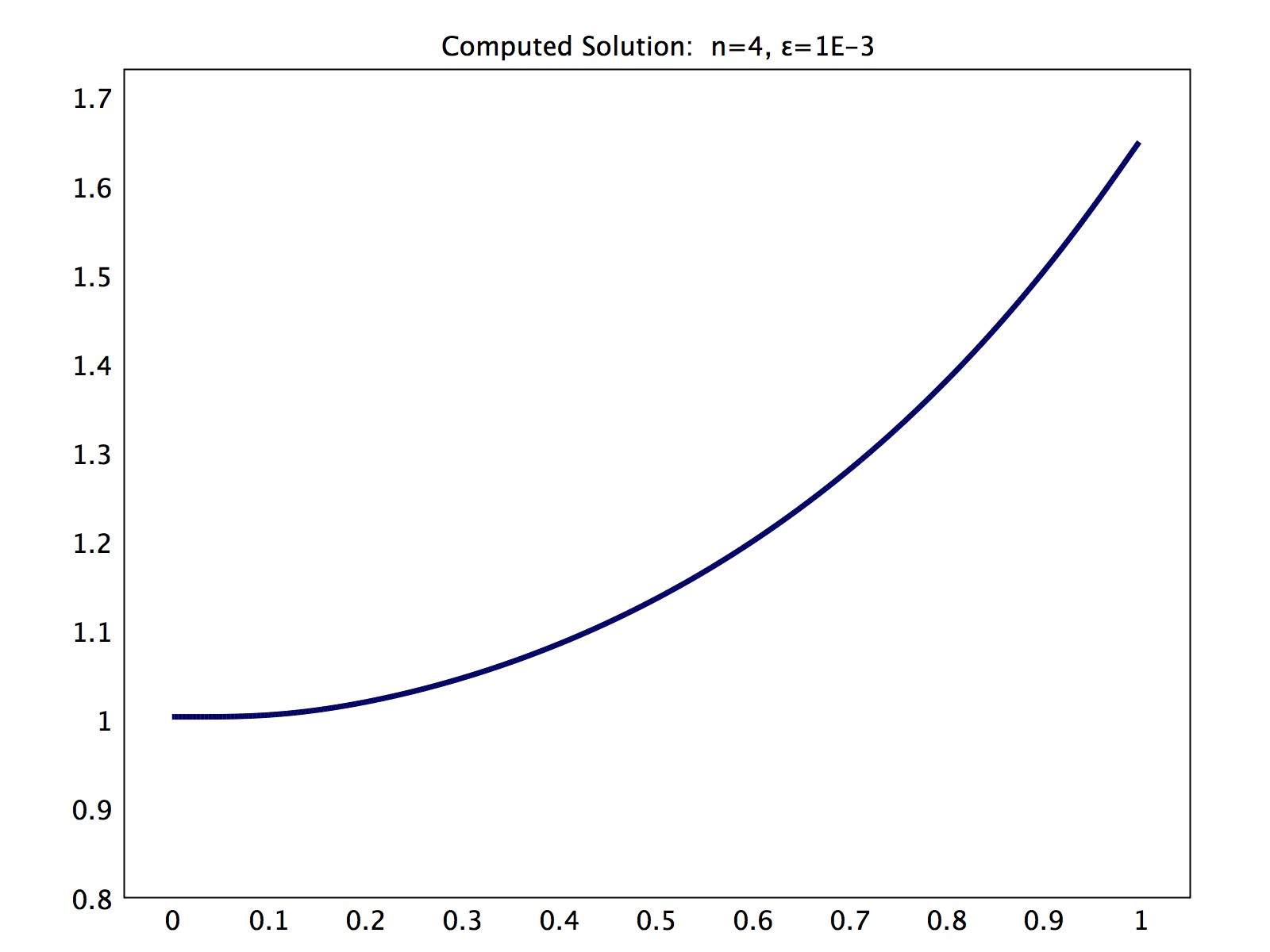} 
\includegraphics[scale=0.11]{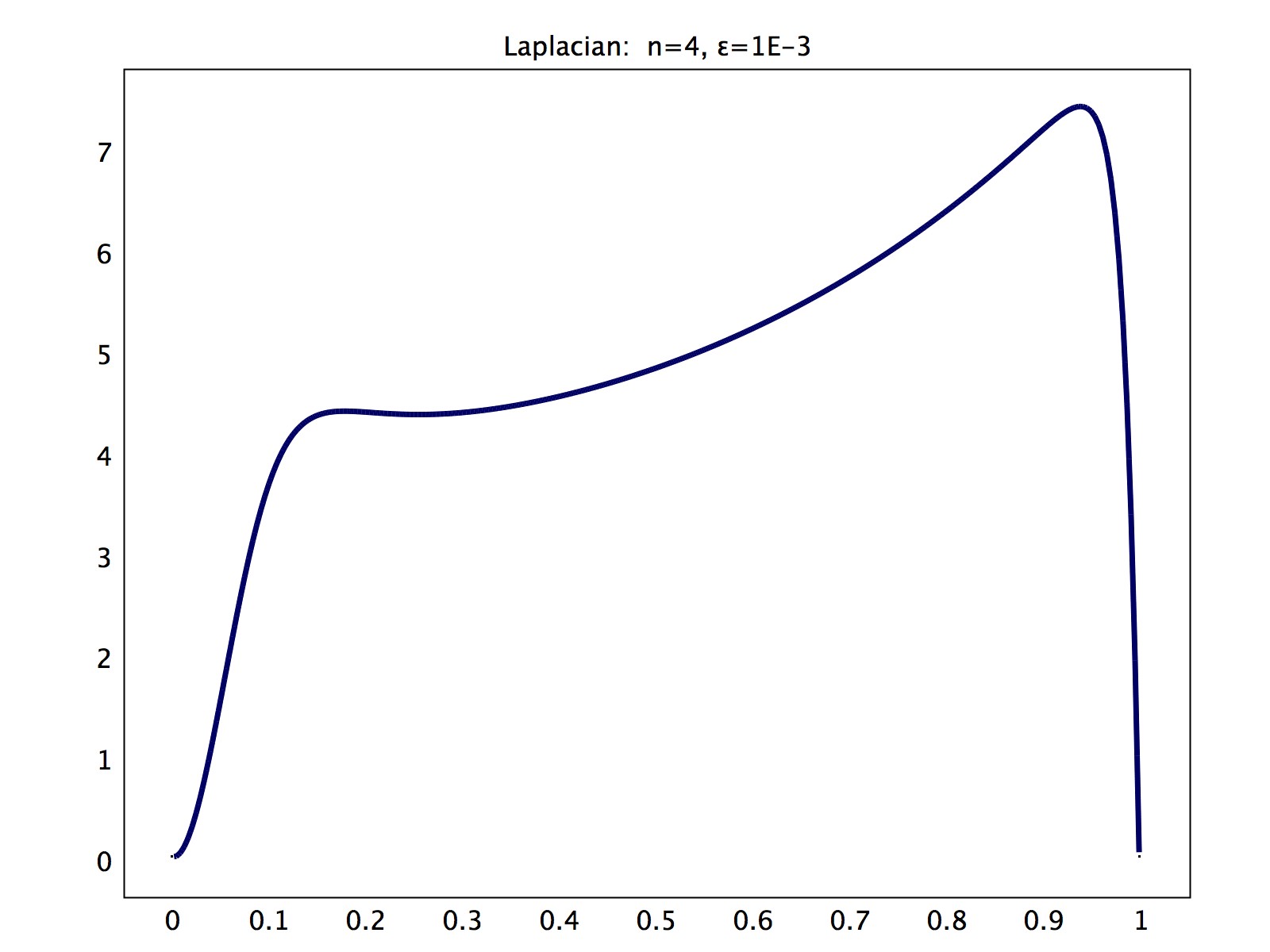} 
}
\caption{Computed solution of \eqref{moment_radial} 
 (left) and and computed 
Laplacian (right) with $n=4, \eps = 10^{-1}, h=4\times 10^{-3}$.}
\label{Test614Figure1}
\end{figure}

\begin{figure}[htb]
\centerline{
\includegraphics[scale=0.11]{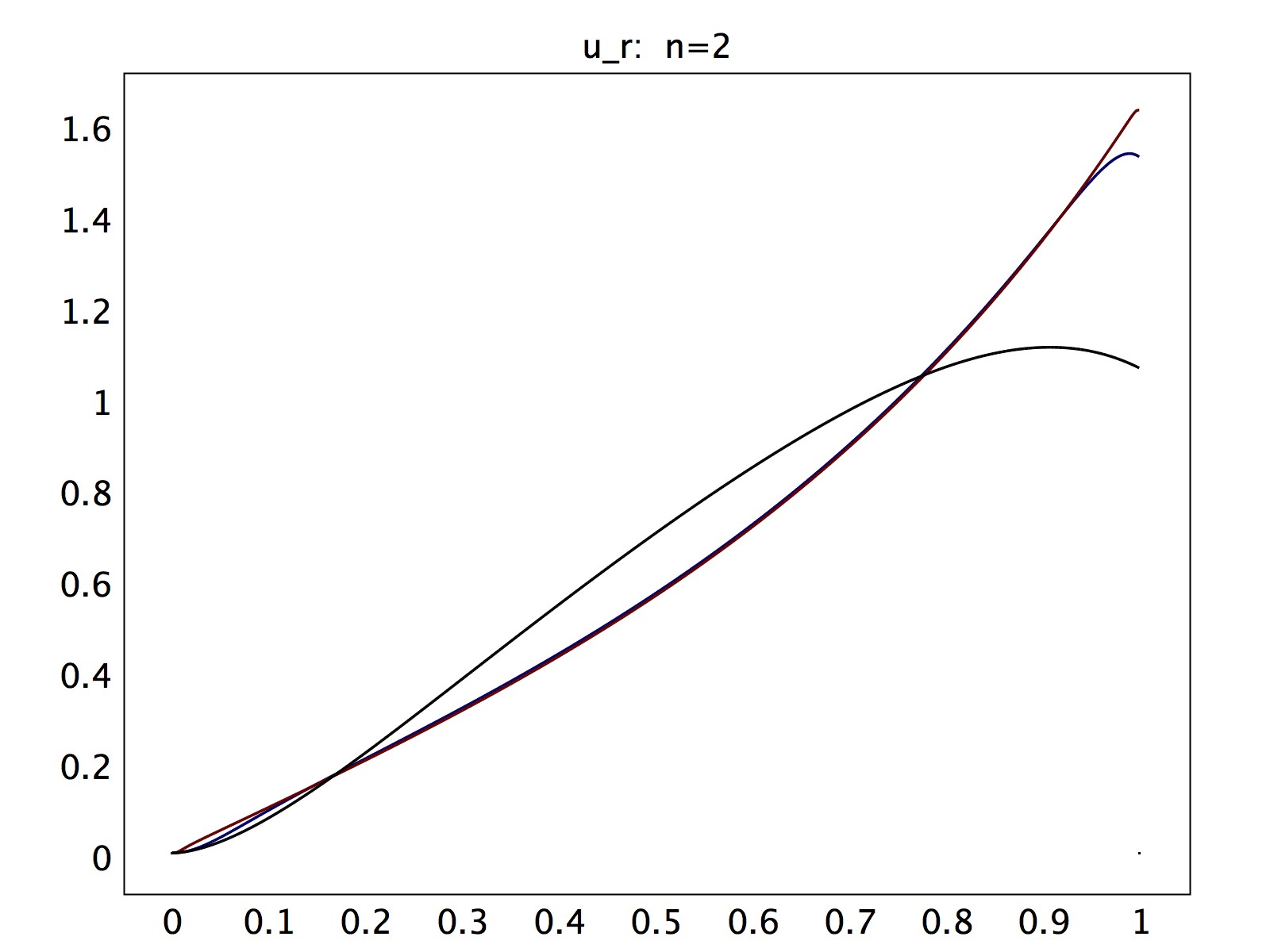}
\includegraphics[scale=0.11]{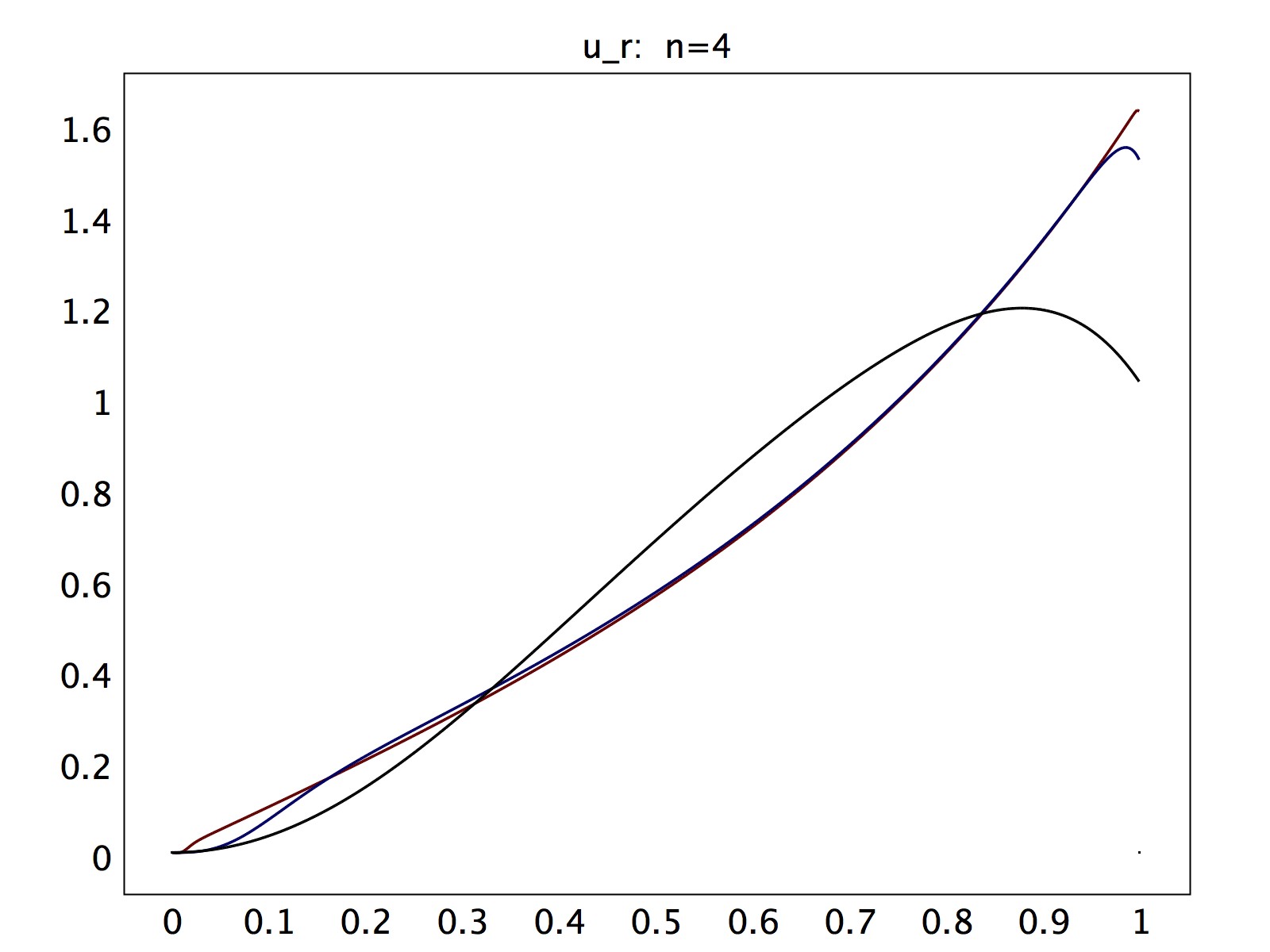}
}
\caption{Computed $u_{r}$ of \eqref{moment_radial} 
for $n=2$ (left), and $n=4$ 
(right) with $\eps=10^{-1}$ (black), $\eps=10^{-3}$ (blue), and 
$\eps = 10^{-5}$ (red) ($h=4\times 10^{-3}$).}
\label{Test614Figure2}
\end{figure}

\begin{figure}[htbp]
\centerline{
\includegraphics[scale=0.11]{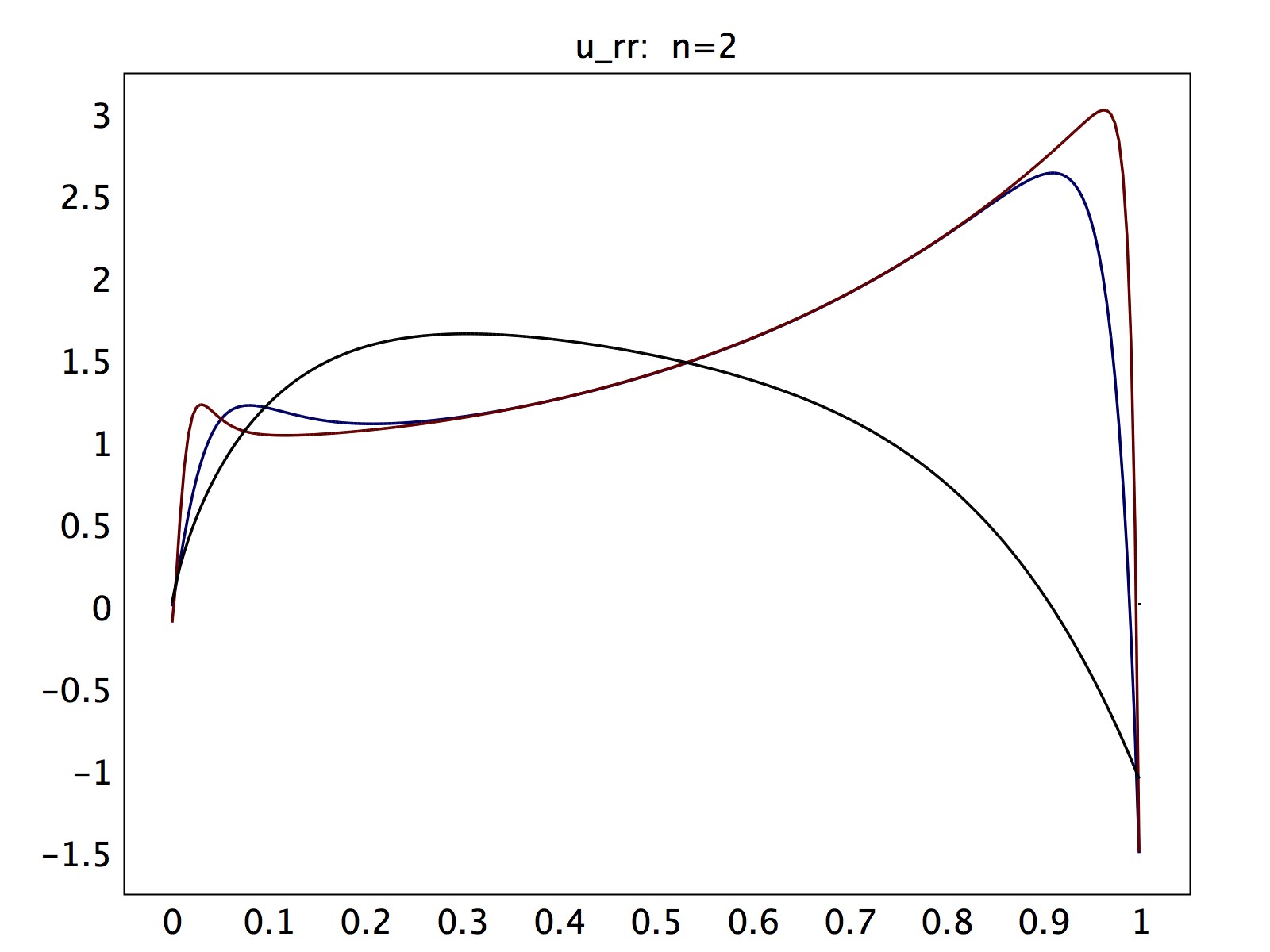}
\includegraphics[scale=0.11]{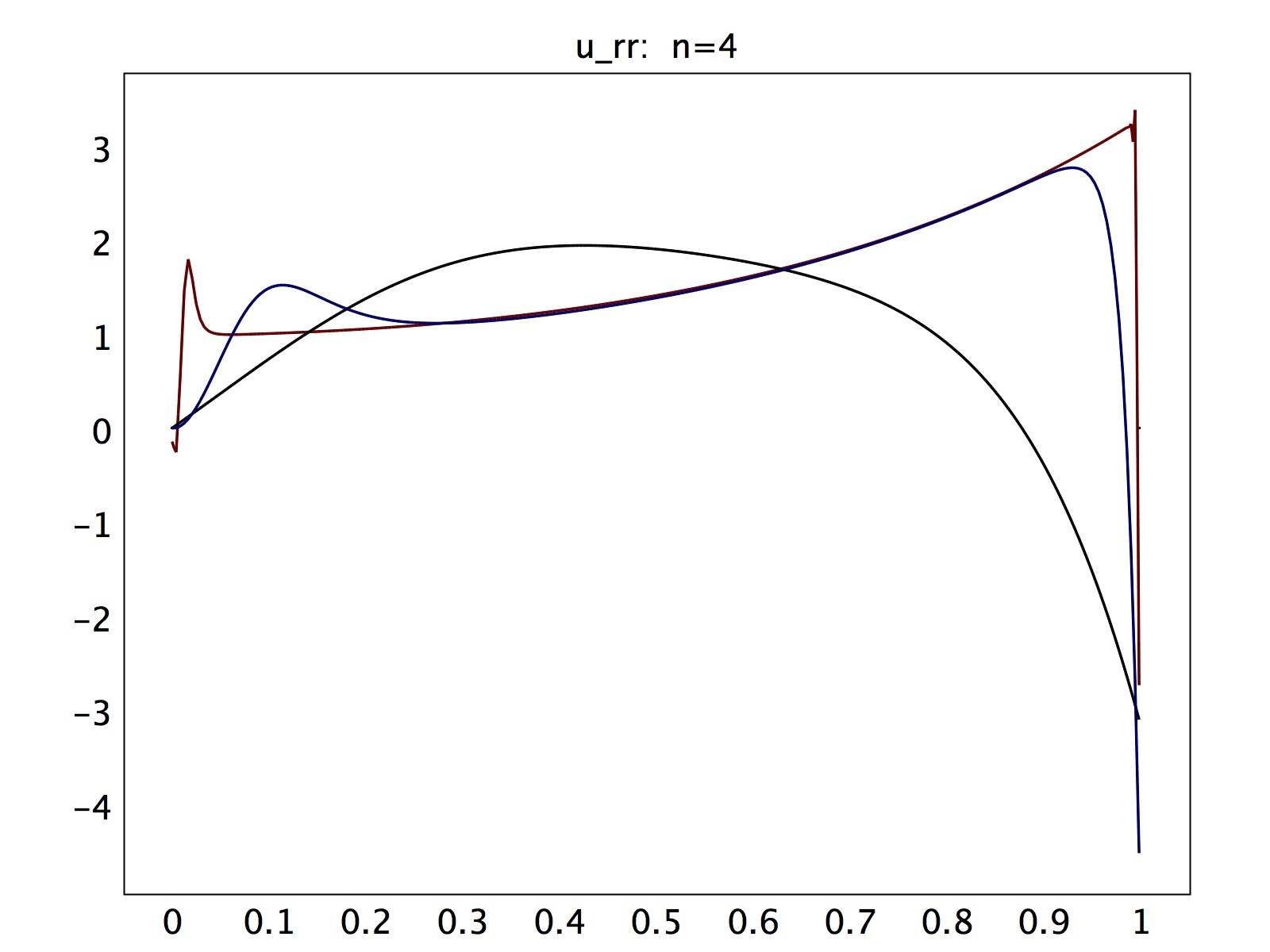}
}
\caption{Computed $u_{rr}$ of \eqref{moment_radial} 
for $n=2$ (left), and $n=4$ 
(right) with $\eps=10^{-1}$ (black), $\eps=10^{-3}$ (blue), 
and $\eps = 10^{-5}$ (red) ($h=4\times 10^{-3}$).}
\label{Test614Figure3}
\end{figure}

\section{Concluding Remarks} \label{section-8}

We like to comment on an interesting property of the vanishing moment 
method, namely, the ability of the vanishing moment method to approximate
the {\em concave} solution of the Monge-Amp\`ere problem \eqref{MAeqn}.
This can be achieved simply 
by letting $\vepsi\nearrow 0^-$ in \eqref{moment}.
This property can be easily proved as follows in the radially symmetric case.

Before giving the proof, we note that for a given $f> 0$ in $\Ome$,
equation \eqref{MAeqn1} does not a have concave solution in odd dimensions
(i.e., $n$ is odd) because $\mbox{det}(D^2 u)=f$ does not hold for any concave
function $u$ as all $n$ eigenvalues of Hessian $D^2u$ of a concave 
function $u$ must be nonpositive. On the other hand, in even dimensions (i.e., $n$ is
even), it is trivial to check that if $u$ is a convex solution of 
problem \eqref{MAeqn} 
with $g=0$, then $-u$, which is 
a concave function, must also be a solution of problem \eqref{MAeqn}.

Next, by the same token, it is easy to prove that if $u^\vepsi$ is a 
convex or ``almost convex" solution to problem \eqref{moment}, 
then $-u^\vepsi$, which is concave or 
``almost concave" in the sense that $-u^\vepsi$ is concave in $\Ome$ minus 
an $O(\vepsi)$-neighborhood of the boundary $\p\Ome$ of $\Ome$), 
must also be a solution of \eqref{moment}. 

Finally, let $n$ be a positive even integer,
it is easy to see that changing $u^\vepsi$ to $-u^\vepsi$ in 
\eqref{moment} 
is equivalent to changing 
$\vepsi$ to $-\vepsi$ in \eqref{moment}. 
For $\vepsi<0$, let $\delta:=-\vepsi$.  After replacing $\vepsi$ by $-\delta$
and $u^\vepsi$ by $\hat{u}^\delta:=-u^\vepsi$
in \eqref{moment_radial},
we see that 
$\hat{u}^\delta$ satisfies the same set of equations 
\eqref{moment_radial}
with $\delta (>0)$
in place of $\vepsi$. Hence, by the analysis of 
Sections \ref{section-2}--\ref{section-6} we know that
there exists a monotone increasing solution $\hat{u}^\delta$ 
to problem \eqref{moment_radial}
with
$\vepsi$ being replaced by $\delta$, which satisfies all the properties 
proved in Sections \ref{section-2}--\ref{section-6}.
Translating all these to $u^\vepsi=-\hat{u}^\delta$, we conclude that 
problem \eqref{moment_radial}
for $\vepsi<0$ has
a monotone decreasing solution which is either concave or ``almost concave"
in $(0,R)$ and converges to the unique concave solution of 
problem \eqref{MAeqn}
as $\vepsi\nearrow 0^-$. 
In addition, $u^\vepsi$ satisfies the error estimates stated 
in Theorems \ref{convergence_rate_thm1} and \ref{convergence_rate_thm2}.


\end{document}